\newtheorem{theorem}{Theorem}[section]
\newtheorem{proposition}[theorem]{Proposition}
\newtheorem{lemma}[theorem]{Lemma}
\newtheorem{corollary}[theorem]{Corollary}
\newtheorem{remark}[theorem]{Remark} 
\numberwithin{equation}{section} 
\numberwithin{figure}{section}  
\renewcommand \div {\text{div}\,}
\newcommand \la \langle
\newcommand \ra \rangle
\newcommand \Ecal {\mathcal{E}} 
\newcommand \Ocal {\mathcal{O}}
\newcommand \trianglerightNEW \triangleright
\newcommand \auth {\textsc} 
\newcommand \CC {\mathbb C}
\newcommand \Lh {\widehat{L}}
\newcommand \bei {\begin{itemize}}
\newcommand \eei {\end{itemize}}
\newcommand \be {\begin{equation}}
\newcommand \bel {\be\label}
\newcommand \ee {\end{equation}}
\newcommand \del \partial
\newcommand \RR {{\mathbb R}}
\newcommand \mE {\Ecal}
\newcommand \Hcal {\mathcal H}
\newcommand \eps \epsilon 
\newcommand \Lcal	{\mathcal L}
\newcommand \tildeA {\widetilde A}
\newcommand \tildechi {\widetilde \chi}
\let\oldmarginpar\marginpar
\renewcommand\marginpar[1]{\-\oldmarginpar[\raggedleft\footnotesize #1]%
{\raggedright\footnotesize #1}}
\begin{document}

\title{Global evolution of the U(1) Higgs Boson: 
nonlinear stability and uniform energy bounds} 

\author{Shijie Dong\footnote{Laboratoire Jacques-Louis Lions, Centre National de la Recherche Scientifique, Sorbonne Universit\'e, 4, Place Jussieu, 75252 Paris, France.  
Email: dongs@ljll.math.upmc.fr, contact@philippelefloch.org}\,, 
\, 
Philippe G. LeFloch$^*$, and Zoe Wyatt\footnote{Maxwell Institute for Mathematical Sciences, School of Mathematics, University of Edinburgh, Edinburgh, EH9 3FD, United Kingddom. Email: zoe.wyatt@ed.ac.uk.}}

\date{February 2019}
\maketitle
 
\begin{abstract} Relying on the hyperboloidal foliation method, we establish the nonlinear stability of the ground state of the $U(1)$ standard model of electroweak interactions. This amounts to establishing a global-in-time theory for the initial value problem for a nonlinear wave-Klein-Gordon system that couples (Dirac, scalar, gauge) massive equations together. In particular, we investigate here the Dirac equation and consider a new energy functional for this field defined with respect to the hyperboloidal foliation of Minkowski spacetime. We provide a novel decay result for the Dirac equation which is uniform in the mass coefficient, and thus allows for the Dirac mass coefficient to be arbitrarily small. Furthermore we obtain energy bounds for the Higgs fields and gauge bosons that are uniform with respect to the hyperboloidal time variable. 
\end{abstract}

\setcounter{secnumdepth}{6}
\setcounter{tocdepth}{1}  

\vfill

\tableofcontents

\vfill

\newpage 


\section{Introduction} 
 
\paragraph*{Main objective.} 

Our primary objective is to study the equations of motion arising from the Higgs mechanism applied to an abelian $U(1)$ gauge theory on a Minkowski background after spontaneous symmetry breaking. We view this model as a stepping-stone towards the full non-abelian Glashow-Weinberg-Salam theory (GSW), also known as the electroweak Standard Model. For background physics information on the models treated in this paper, see for example \cite{Ait-Hey}. 

In short, we study here a class of nonlinear wave equations which involve the first-order Dirac equation coupled to second-order wave or Klein-Gordon equations.  We are interested in the initial value problem for such systems, when the initial data have sufficiently small Sobolev-type norm. 
We provide here a new application of the hyperboloidal foliation method introduced for such coupled systems by LeFloch and Ma \cite{PLF-YM-book}, which has been successfully used to establish 
global-in-time existence results for nonlinear systems of coupled wave and Klein-Gordon equations. 
This method takes its root in pioneering work by Friedrich \cite{Friedrich1,Friedrich2} on the vacuum Einstein equations and by Klainerman \cite{Klainerman80,Klainerman85} and H\"ormander \cite{Hormander} on the (uncoupled) Klein-Gordon equation, as well as Katayama \cite{Katayama12a,Katayama12b}. The issue of nonlinear interaction terms that couple wave and Klein-Gordon equations together, was a challenge tackled in \cite{PLF-YM-book} and in the subsequent developments on the Einstein equations \cite{PLF-YM-cmp, PLF-YM-book2, PLF-YM-arXiv1} and \cite{Fajman,Smulevici}. 

\paragraph*{The model of interest.}

In the {\bf abelian $U(1)$ gauge model}, the set of unknowns consists of a Dirac field $\psi: \RR^{3+1} \to \CC^4$ representing a fermion of mass $m_g$ with spin $1/2$, a vector field $A=(A^\mu)$ representing a massive boson of mass $m_q$ with spin $1$, and a complex scalar field $\chi$ representing the perturbation from the constant minimum $\phi_0$ of the complex Higgs field $\phi = \phi_0+\chi$. Note the set of ground states $\phi_0 = \phi_0(t,x)$ of the theory are defined by $ v^2 := \phi_0^* \phi_0$, however in this work, as in \cite{Tsutsumi2}, we consider only constant ground states satisfying $\partial_\mu \phi_0 = 0$.
 Given such a constant ground state $\phi_0$
and the three physical parameters $m_q, m_\lambda, m_g$, the equations of motion in a modified Lorenz gauge consist of three evolution equations
\begin{subequations} 
\label{eq:intro-u1-pde-modified-lorenz}
\bel{eq:intro-u1-pde-modified-lorenz1}
\aligned
\big( \Box - m_q^2 \big) A^\nu 
& = Q_{A^\nu},
\\
\Box \chi - m_q^2 \, {\phi_0  \over 2 v^2} \big(\phi_0^* \chi - \chi^* \phi_0 \big) 
- m_\lambda^2 \, {\phi_0 \over 2 v^2} \big(\phi_0^* \chi - \chi^* \phi_0 \big)
& = Q_{\chi},
\\
i \gamma^\mu \del_\mu \psi - m_g \psi 
&= Q_{\psi},
\endaligned
\ee
and a constraint equation
\bel{eq:intro-u1-pde-modified-lorenz2}
\aligned
\textrm{div}A + i {m_q \over v \sqrt{2}} \big(\phi_0^* \chi - \chi^* \phi_0 \big)
&=
0. 
\endaligned
\ee
\end{subequations} 
Here, the quadratic nonlinearities $Q_{A^\nu}$, $Q_{\chi}$, and $Q_{\psi}$ are defined later (see \eqref{eq:u1-nl}).
Throughout, we use the signature $(-, +,+, +)$ for the wave operator 
$\Box := \eta^{\mu \nu} \del_\mu \del_\nu  = - \del_t^2 + \Delta$, while the matrices $\gamma^\mu$ and $\gamma_5$ are the standard Dirac matrices (cf.~Section \ref{subsec:dirac-matrices}). 
 
Some important physical parameters are provided together with the mass coefficients above, namely 
\bel{eq:param}
m_q^2 
:= 2 q^2 v^2 > 0, 
\qquad
m_\lambda^2 
:= 4 \lambda v^2 > 0, 
\qquad
m_g 
:= g v^2 \geq 0, 
\ee
themselves depending on given {\bf coupling constants}, denoted by 
$\lambda, g, q$, 
as well as the  ``vacuum expectation value''  of the Higgs field denoted by $v$. 


\paragraph*{The nonlinear stability of the Higgs field.}

Our main result concerning the system \eqref{eq:intro-u1-pde-modified-lorenz} is a proof of the global-in-time existence of solutions for sufficiently small perturbations away from the \textbf{constant vacuum state}, defined by the conditions 
\be 
\begin{split}
A^\mu \equiv 0, 
\quad
\phi \equiv \phi_0 \textrm{ and } \partial_\mu \phi_0 = 0,
\quad
\psi \equiv 0.
\end{split}
\ee
It is convenient to work with the perturbed Higgs field 
$\chi = \phi - \phi_0$ as our main unknown. 
The initial data set are denoted by
\bel{eq:u1-ID}
\big( A^\nu, \chi, \psi \big) (t_0, \cdot) 
= \big( A^\nu_0, \chi_0, \psi_0 \big), 
\qquad
 \big( \del_t A^\nu, \del_t \chi \big) (t_0, \cdot) 
= \big( A^\nu_1, \chi_1 \big), 
\ee
and these data are said to be \textbf{Lorenz compatible} if  
\bel{eq:U1c} 
\aligned
\del_a A_0^a 
&
	= - A^0_1  - i q \big( \phi_0^* \chi_0 - \chi_0^* \phi_0 \big),
\\
\Delta A^0_0 - m_q^2 A^0_0  
&=  - \del_i A_1^i - i q \big( \phi_0^* \chi_1 - \chi_1 ^* \phi_0 \big) + i q \big( \chi_0^* \chi_1 - \chi_1 ^* \chi_0 \big) 
\\
& \quad + 2 q^2 A^0_0 \big( \phi_0^* \chi_0 + \chi_0^* \phi_0 + \chi_0^* \chi_0 \big) + q \psi_0^* \psi_0.
\endaligned
\ee
This elliptic-type system consists of two equations for $11$ functions. In particular, it is easily checked that it admits non-trivial solutions, for instance with compact support.  Observe also that throughout we use the convention that Greek indices take values in $\{0,1,2,3\}$ and Latin indices in $\{1,2,3\}$. In the following statement, we have $m_g \geq 0$, and the coefficient ${m_g^{-1}}$ is interpreted as $+\infty$ when $m_g = 0$.

\begin{theorem}[Nonlinear stability of the ground state for the Higgs boson] 
\label{thm:U1}
Consider the system \eqref{eq:intro-u1-pde-modified-lorenz} with parameters $m_q, m_\lambda > 0$, $m_g \in [0, \min (m_q, m_\lambda)]$ and let $N$ be a sufficiently large integer. 
There exists $\eps_0 > 0$, which is  \underline{independent} of $m_g$, such that for all $\eps \in (0, \eps_0)$ and all compactly supported, Lorenz compatible initial data (in the sense of \eqref{eq:U1c}) satisfying the smallness condition
\bel{eq:ID-small} 
\| A_0, \chi_0, \psi_0 \|_{H^{N+1}(\RR^3)} +  \|A_1, \chi_1 \|_{H^{N}(\RR^3)}  \leq \eps,
\ee
the initial value problem of \eqref{eq:intro-u1-pde-modified-lorenz} admits a global-in-time solution $(A, \chi, \psi )$ with, moreover, 
\bel{eq:mainthmpsiest} 
|A| 
\lesssim 
\eps t^{-3/2}, 
\qquad
|\chi| 
\lesssim 
\eps t^{-3/2},
\qquad
|\psi| 
\lesssim 
\eps \min \big( t^{-1}, m_g^{-1} t^{-3/2} \big).
\ee
Furthermore we obtain the following uniform energy estimates at highest order
\bel{eq:mainthmL2est}
\big\|(s/t) \del^I L^J (\del_\mu A_\nu, \del_\mu \chi) \big\|_{L^2_f(\Hcal_s)}
 \lesssim
	C_1 \eps, 
 \quad |I|+|J|= N,
\ee
and logarithmic energy growth only for the Dirac field
\be
\big\| (s/t) \del^I L^J  \del_\mu \psi \big\|_{L^2_f(\Hcal_s)} 
 \lesssim
	C_1 \eps \log s, 
\quad |I|+|J|= N.
\ee
\end{theorem}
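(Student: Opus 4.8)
The proof runs as a continuity (bootstrap) argument on the hyperboloidal foliation $\{\Hcal_s = \{t^2-|x|^2 = s^2,\ t>0\}\}_{s\ge s_0}$. Because the Cauchy data are compactly supported, finite speed of propagation confines the solution to the interior of a light cone, which is covered by the hyperboloids $\Hcal_s$; so I would recast the problem at an initial hyperboloid $\Hcal_{s_0}$ and work throughout with the Lorentz boosts $L_a = x^a \del_t + t \del_a$ and the translations $\del_\alpha$. On a maximal interval $[s_0,s_1)$ I would assume the bootstrap bounds: the hyperboloidal energies of $\del^I L^J A$ and $\del^I L^J \chi$ bounded by $C_1\eps$, and the new Dirac energy of $\del^I L^J \psi$ bounded by $C_1\eps\log s$, for $|I|+|J|=N$, together with strictly better (in fact uniformly bounded, without the $\log$) bounds at the intermediate orders $|I|+|J|\le N-2$ (possible for $N$ large). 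The hyperboloidal Klainerman--Sobolev inequality then converts the top-order $L^2_f(\Hcal_s)$ bounds into the pointwise rates $|\del^I L^J A|,\,|\del^I L^J \chi|\lesssim \eps\, t^{-3/2}$ for $|I|+|J|\le N-2$, and an analogous statement for $\psi$, which feed every nonlinear estimate below.

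For the gauge field and the Higgs perturbation I would commute \eqref{eq:intro-u1-pde-modified-lorenz1} with $\del^I L^J$ --- the commutators $[L^J,\Box-m_q^2]$ and the variable coefficients being of lower order and mass-compatible --- and apply the standard hyperboloidal Klein--Gordon energy identity, so that the growth of each energy is controlled by $\int_{s_0}^s \|\del^I L^J Q_{A^\nu}\|_{L^2_f(\Hcal_\tau)}\,d\tau$ and similarly for $\chi$. The decisive point is that every quadratic term in $Q_{A^\nu}$ and $Q_\chi$ is either a product of two massive, hence $t^{-3/2}$-decaying, factors or carries a favourable (null or derivative) structure, so that after Cauchy--Schwarz and the Step~1 decay the integrand is $O(\tau^{-1-\delta})$ --- \emph{uniformly in $m_g$} --- and the integral stays bounded; this is exactly what produces the non-growing bounds \eqref{eq:mainthmL2est}. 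In parallel I would verify that the modified Lorenz gauge \eqref{eq:intro-u1-pde-modified-lorenz2} is propagated by the flow: the constraint defect satisfies a homogeneous second-order equation and vanishes initially by the Lorenz compatibility \eqref{eq:U1c}, hence vanishes for all $s$.

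The core of the argument, and where I expect the main difficulty, is the Dirac field together with the uniformity in $m_g$. I would introduce the new hyperboloidal energy for $\psi$ --- built by contracting $\psi^*\gamma^0\gamma^\mu\psi$ with the hyperboloidal frame and integrating over $\Hcal_s$ --- and derive its growth from the first-order equation; the source and the spinorial commutators $[L^J,\gamma^\mu\del_\mu]$ produce at top order a term that decays only like $\tau^{-1}$ in $L^2_f(\Hcal_\tau)$, whence the factor $\log s$. For the pointwise decay I would argue in two complementary ways: using the first-order structure directly, with a Klainerman--Sobolev estimate that makes no use of the mass, gives the $m_g$-independent rate $t^{-1}$; while using that $(\Box-m_g^2)\psi = (i\gamma^\nu\del_\nu + m_g)Q_\psi$, together with the fact that the Klein--Gordon energy only controls $\|m_g\psi\|_{L^2_f(\Hcal_s)}$ --- so that isolating $\psi$ itself costs a factor $m_g^{-1}$ --- gives the rate $m_g^{-1}t^{-3/2}$; taking the better of the two yields $|\psi|\lesssim \eps\min(t^{-1},m_g^{-1}t^{-3/2})$ as in \eqref{eq:mainthmpsiest}. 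Crucially, $Q_{A^\nu}$ and $Q_\chi$ contain $\psi$ only through quadratic expressions that gain extra decay, so the $\log s$ in the Dirac energy never feeds back into the $A$ and $\chi$ estimates.

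Finally, since all the constants produced above are independent of $m_g$, I may fix $\eps_0$ small --- independently of $m_g$ --- so that the resulting estimates strictly improve the bootstrap assumptions; hence $s_1=\infty$, the solution is global, and one last application of the hyperboloidal Klainerman--Sobolev inequality upgrades the closed energy bounds to the pointwise rates in \eqref{eq:mainthmpsiest}. The hard part throughout is the Dirac analysis: designing a hyperboloidal energy functional for a first-order spinor equation that is compatible with the $L^J$ commutators, and extracting a decay statement that survives the limit $m_g\to 0$ by balancing the wave-like rate $t^{-1}$ against the Klein--Gordon rate $m_g^{-1}t^{-3/2}$ --- this is the technical heart and the novelty of the result.
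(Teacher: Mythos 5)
Your scaffolding (hyperboloidal bootstrap, a positive first-order Dirac energy on $\Hcal_s$, modified boosts commuting with $i\gamma^\mu\del_\mu$, the two competing decay rates $t^{-1}$ and $m_g^{-1}t^{-3/2}$ for $\psi$, propagation of the modified Lorenz gauge) matches the paper. But there is a genuine gap at the step you call ``decisive'': you assert that every quadratic term in $Q_{A^\nu}$ and $Q_\chi$ is either a product of two massive, $t^{-3/2}$-decaying factors or carries a null/derivative structure, and later that ``$Q_{A^\nu}$ and $Q_\chi$ contain $\psi$ only through quadratic expressions that gain extra decay.'' This is false for the terms $q\,\psi^*\gamma^0\gamma^\nu\psi$ in $Q_{A^\nu}$ and $-g(\phi_0+\chi)\psi^*\gamma^0\psi$ in $Q_\chi$: as written they are neither null forms nor products of massive fields uniformly in $m_g$. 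When $m_g$ is small the only $m_g$-independent pointwise bound available is $|\psi|\lesssim\eps\,t^{-1/2}s^{-1}$, and the best direct $L^2$ estimate, $\|\psi^*\psi\|_{L^2_f(\Hcal_s)}\le\|(t/s)\psi\|_{L^\infty}\|(s/t)\psi\|_{L^2_f}\lesssim\eps^2 s^{-1}$, already produces logarithmic growth of the $A$ and $\chi$ energies at \emph{all} orders (and worse at top order), so your bootstrap cannot close to the uniform bounds \eqref{eq:mainthmL2est}. The paper resolves this with a Tsutsumi-type nonlinear change of unknowns, $\tildeA^\nu := A^\nu + \tfrac{q}{m_q^2}\psi^*\gamma^0\gamma^\nu\psi$ and $\tildechi_+ := \chi_+ - \tfrac{2m_g}{m_\lambda^2}\psi^*\gamma^0\psi$ (see \eqref{eq:def-B-vec}, \eqref{eq:def-tildechi}): applying $\Box - m_q^2$ to the correction term and using the second-order form \eqref{eq:psi-second-order} of the Dirac equation cancels the bad bilinear and replaces it by the genuine null form $Q_0(\psi,\gamma^0\gamma^\nu\psi)$ plus remainders weighted by $m_g^2$ (resp.\ $m_g^3$), and the latter are then controlled by the bound $|m_g\psi|\lesssim\eps\,t^{-3/2}$ of Proposition \ref{prop:energy-decop}, giving an integrable source uniformly in $m_g$. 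Without this transformation (or an equivalent device) your argument does not yield the stated uniform energy bounds.

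A secondary omission: the $\chi$-equation in \eqref{eq:intro-u1-pde-modified-lorenz1} is not a scalar Klein--Gordon equation with a single mass (its ``mass term'' mixes $\chi$ and $\chi^*$), so before running any Klein--Gordon energy estimate one must decompose $\chi$ into $\chi_\pm = \phi_0^*\chi\pm\chi^*\phi_0$, which satisfy genuine Klein--Gordon equations with masses $m_\lambda$ and $m_q$ respectively; the equivalence $|\chi_+|^2+|\chi_-|^2 = 2v^2|\chi|^2$ then transfers the estimates back to $\chi$. Your attribution of the $\log s$ loss is also slightly off: in the paper the first-order Dirac energy of $\Lh^J\psi$ stays uniformly bounded up to $|J|=N$ (Theorem \ref{thm:sup-Dirac1}); the logarithm arises only in the \emph{second-order} energy $\Ecal_{m_g}(s,\del^I L^J\psi)$ at top order, from the term $\del^I L^J((\del_\mu H)\psi)$ whose $L^2_f$ norm decays only like $\bar s^{-1}$.
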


\begin{remark}
The result in Theorem \ref{thm:U1} holds in a fairly straightforward way for the cases $m_g = 0$ or $m_g \simeq \min ( m_q, m_\lambda)$. However much more is required to obtain a result uniform in terms of the mass parameter $m_g \in [0, \min (m_q, m_\lambda)]$.
\end{remark}
\begin{remark} The assumption $m_g \in [0, \min (m_q, m_\lambda)]$ in
Theorem \ref{thm:U1} can be understood both mathematically and physically. 
Consider the Klein-Gordon equation 
$$\Box w - m_g^2 w = F .$$
In order to obtain an estimate of the energy on a constant-time slice $t$, we
use the standard energy estimate $$\int_{\RR^3} \big| \del w(t) \big|^2 +
m_g^2 |w(t)|^2 \, dx \leq \int_{\RR^3} \big| \del w(t_0) \big|^2 + m_g^2
|w(t_0)|^2 \, dx + \int_{t_0}^t \int_{\RR^3} |F(t’) \del_t w(t’)| \, dx
dt’$$
where here $\del \in \{\del_t, \del_x, \del_y, \del_z \}$. Any energy estimate for $t \geq t_0$ clearly requires control over the initial term
$m_g^2 \|w(t_0)\|^2_{L^2(\RR^3)}$. Since this term depends on the mass parameter
$m_g$, we clearly require $m_g$ to have an upper bound in order for any energy argument to be made independent of $m_g$. What the upper bound on $m_g$  should be is clearly motivated from physics. In physical models, $m_q$ and $m_\lambda$ represent the
masses of the gauge and Higgs fields respectively, while $m_g$ represents any fermion masses. Fermion masses are
extremely small compared to both $m_q, m_\lambda$, and so it is particularly relevant to
study the system in the entire range $m_g \in [0, \min (m_q, m_\lambda)]$.
\end{remark}
 
\paragraph*{A simpler model: the Dirac-Proca equations.}

Keeping \eqref{eq:intro-u1-pde-modified-lorenz} in mind, it is interesting to first understand a simplified subset of \eqref{eq:intro-u1-pde-modified-lorenz}, called the Dirac-Proca equations. In the Lorenz gauge the equations of motion for this model read
\bel{eq:intro-dirac-proca} 
\aligned
\Box A^\nu - m^2 A^\nu  
& = 
	-\psi^* \gamma^0 \gamma^\nu (P_L \psi),
\\
-i \gamma^\mu \del_\mu \psi + M \psi 
&= 
	- \gamma^\mu A_\mu (P_L \psi),	
\endaligned 
\ee
where $P_L = {1 \over 2} (I_4 - \gamma^5)$. 
This system describes a spinor field $\psi: \RR^{3+1} \to \CC^4$ representing a fermion of mass $M$ with spin $1/2$ and a vector field $A=(A^\mu)$ representing a massive boson of mass $m$ with spin $1$. We allow the mass parameters $M, m$ to be positive or zero and, in particular, we will be interested in the case $M = 0$ (i.e. the standard Dirac-Proca equations) as well as in the case $M > 0$ (massive field). Observe that the Dirac-Proca system \eqref{eq:intro-dirac-proca} contains no Higgs field and so the masses $M, m$ are introduced ``artificially'' in the model, instead of arising from the Higgs mechanism. 

An initial data set for \eqref{eq:intro-dirac-proca}, say, 
\bel{eq:101-ID} 
\big(A^\nu, \del_t A^\nu, \psi \big) (t_0, \cdot)
=
\big(a^\nu, b^\nu, \psi_0 \big),
\ee
is called Lorenz compatible if one has 
\bel{eq:lorenzc} 
\aligned
b^0 + \del_j a^j 
	& = 0,
\\ 
\Delta a^0 - m^2 a^0 
& = 
	-\psi_0^* P_L \psi_0 - \del_j b^j.
\endaligned
\ee
We postpone the statement of our second main result, for this system, to Theorem \ref{theo-oursecondone} below. 

 
\paragraph*{Strategy of proof.}

The small data global existence problem for \eqref{eq:intro-dirac-proca} with $m_g \equiv 0$ and for \eqref{eq:intro-u1-pde-modified-lorenz} was solved by Tsutsumi in \cite{Tsutsumi, Tsutsumi2}. Nonetheless it is very interesting to revisit this system via the hyperboloidal foliation method developed in \cite{PLF-YM-book} as it provides a different perspective on the problem and sharper estimates, as detailed below. We thus introduce a hyperboloidal foliation which covers the interior of a light cone in Minkowski spacetime, and we then construct the solutions of interest in the future of an initial hyperboloid. For compactly-supported initial data this is equivalent to solving the initial value problem for a standard $t$=constant initial hypersurface. 

Although restricting to compactly supported initial data is a strong assumption, and is not assumed by Tsutsumi in \cite{Tsutsumi, Tsutsumi2}, we find our results nonetheless very interesting. Furthermore the compact support restriction can be removed, subject to some long computational checks, using the Euclidian-Hyperboloidal Foliation Method developed in \cite{PLF-YM-arXiv1,PLF-YM-Comptes}.  

A key new insight in our work that arises from using $L^2$ norms defined on hyperboloids, is that we are able to  derive the following energy functional on hyperboloids
\bel{eq:hyp-dirac-intro}
E^\Hcal(s, \psi) 
:= \int_{\Hcal_s} \left(  \psi^* \psi - \frac{x_i}{t} \psi^* \gamma^0 \gamma^i \psi \right) dx. 
\ee
We can show that this energy is positive definite and controls the norm $\| (s/t) \psi \|_{L^2}$, which provides us with a notion of energy on hyperboloids and should be compared with the more standard approach based on the standard $t=$ constant hypersurfaces; cf.~for example \cite{Bournaveas}. In the case $0 \leq M \ll m$ we use the Dirac equation (expressed in a first-order form and using this energy functional) combined with Sobolev-type estimates. 


\paragraph*{Challenges for the global analysis and new insights.}

There are several difficulties in dealing with the system of coupled wave--Klein--Gordon equations \eqref{eq:intro-u1-pde-modified-lorenz}. 
The first and most well-known one is that the standard Klein-Gordon equation does not commute with the scaling Killing field $S=x^\mu \del_\mu$ of Minkowski spacetime, which prevents us from applying the standard vector-field method in a direct manner. 

Second, the nonlinearities in the Klein--Gordon equation (or the Proca equation) include a bad term  describing $\psi$--$\psi$ interactions, which can be regarded as a wave--wave interaction term and does not have good decay. Tsutsumi \cite{Tsutsumi} was able to overcome this difficulty by defining a new variable whose quadratic nonlinearity only involves the so-called `strong null forms' $Q_{\mu j}$ (first introduced in \cite{Georgiev}) compatible with the scaling vector field. A similar, but complicated new variable, was also defined in \cite{Tsutsumi2}. Due to our use of the hyperboloidal foliation method we do not need to find new variables leading solely to strong null forms, which is an important simplification useful in our future work studying the full GSW theory. 

Next, our global-in-time existence result is established under a lower regularity assumption on the initial data. Namely, the boundedness of the initial data in the norm $\|\cdot \|_{H^N}$ ($N\geq 20$) is needed in \cite{Tsutsumi}, while we only require $N\geq 6$. Most importantly, in the main statement in \cite{Tsutsumi, Tsutsumi2}, a slow growth of the energy of all high-order derivatives occurs. 
By contrast, with our method of proof {\sl we do not have any growth factor} in the $L^2$ norm \eqref{eq:mainthmL2est} and the $L^\infty$ norms of the Higgs scalar field and vector boson field. Note that uniform energy bounds were also established in \cite{PLF-Wei}.   We refer to Section  \ref{sec:bootstrap} for a further discussion of these improved estimates. 

Furthermore, it is challenging to establish a global stability result uniformly for all $m_g \geq 0$, while it is relative easy to complete the proof for either $m_g = 0$ or $m_g$ a large constant. In the case where the mass is small, say $m_g = \eps^2 $, if we treat $\psi$ as a Klein-Gordon field then the field decays like 
$$
|\psi | 
\lesssim 
C_1 \eps^{-1} t^{-3/2}.
$$
However with this decay we cannot arrive at the improved estimate $(1/2) C_1 \eps$ if we start from the \textit{a priori} estimate $C_1 \eps$, where $C_1$ is some large constant introduced in the bootstrap method. This is because the ``improved" estimates we find are $\eps + C_1^2$ instead of $\eps + (C_1 \eps)^2$. Hence $\psi$ behaves more like a wave component when $m_g \ll 1$, but since the mass $m_g$ may be very small but non-zero, we cannot  apply techniques for wave equations.
We find it possible to overcome these difficulties by analysing the first--order Dirac equation, which admits the positive energy functional \eqref{eq:hyp-dirac-intro}, and this energy plays a key role in the whole analysis. Furthermore although we obtain logarithmic growth for the $L^2$ norm of our Dirac field, we obtain interesting $L^\infty$ estimates accessible through our use of the hyperboloidal foliation. We refer to Section \ref{sec:Sobolev-est-Dirac} and Theorem \ref{thm:sup-Dirac1} for a further discussion of these improved estimates. One of the authors has also recently established further results concerning the zero-mass limit of Klein-Gordon equations \cite{Dong19}.

\paragraph*{The coupling constants.} 
For the mass parameters \eqref{eq:param} appearing in the $U(1)$--model
we assume that
\be
m_q \simeq m_\lambda >0,
\qquad
m_g \geq 0, 
\ee 
which depend on the coupling constants $q,g$ and the Higgs constants $\lambda, v$. 
On the other hand, for the mass parameters appearing in the Dirac--Proca model we assume\footnote{There is no Higgs field to generate the masses, and so we do not need to use  subscripts for the mass coefficients.}
$$
m>0, \qquad M \geq 0. 
$$

\paragraph*{Outline of this paper.}  

In Section \ref{sec:hyper-energies}, we will introduce the hyperboloidal foliation method, the Dirac equation, and the energy for the Dirac component.  In Sections \ref{subsec:dirac-matrices} and \ref{subsec:hyperboloid-defs} we give basic definitions for the Dirac equation and hyperboloidal foliation, the latter taken from \cite{PLF-YM-book}. In Section \ref{subsec:dirac-energies} we define the energy for the Dirac field on hyperboloids and give an energy estimate in Proposition \ref{prop-hyperboloidal-energy-inequality-Dirac}. Note this will be complemented by other formulations of the energy functional in Section \ref{sec:choleksy-weyl}. Finally in Section \ref{subsec:2ndorder-dirac-energy} we convert the Dirac equation into a second order wave/Klein-Gordon equation and define the appropriate energy functional. 
Further properties about the hyperboloidal energy functionals are established in Section \ref{sec:choleksy-weyl}. Therein we provide complementing views on the Dirac energy $E^\Hcal$. The first comes from using a Cholesky decomposition for the energy integrand in Section \ref{subsec:choleksy} and next using a Weyl decomposition for the Dirac spinor in Section \ref{subsec:Weyl}.
Then we study the Dirac--Proca model and the abelian model in Section \ref{section:DiracProca} and Section \ref{section:U1model}, respectively.
Next, in Section \ref{sec:nonlinearities} we discuss the system of equations \eqref{eq:intro-u1-pde-modified-lorenz} and its nonlinearities are studied. Finally, in Section~\ref{sec:bootstrap} we rely on a bootstrap argument and prove the desired the stability result.


\section{Hyperboloidal energy functionals for the Dirac operator} \label{sec:hyper-energies}

\subsection{Dirac spinors and matrices} \label{subsec:dirac-matrices}

This section is devoted to analyzing energy functionals for the Dirac equation with respect to a hyperboloid foliation of Minkowski spacetime. Recall the Dirac equation for the unknown $\psi \in \CC^4$
\bel{Dirac1} 
-i \gamma^\mu \del_\mu \psi + M \psi 
= F,
\ee 
with prescribed right-hand side $F \in \CC^4$ and mass $M \in \RR$. To make sense of this equation we need to define various complex vectors and matrices. 
 
For a complex vector $z = (z_0, z_1, z_2, z_3)^T \in \CC^4$ let $\bar{z}$ denote the conjugate, and $z^* := (\bar{z}_0, \bar{z}_1, \bar{z}_2, \bar{z}_3)$ denotes the conjugate transpose. If also $w \in \CC^4$ then the conjugate inner product is defined by
$$
\la z, w \rangle
:=
z^* w 
= \sum_{\alpha=1}^4 \bar{z}_\alpha w_\alpha.
$$
The Hermitian conjugate\footnote{In the physics literature, $A^*$ is often denoted by $A^\dagger$.} 
of a matrix $A$ is denoted by $A^*$, meaning 
$$ 
(A^*)_{\alpha \beta} 
:= (\bar{A})_{\beta \alpha}.
$$
The Dirac matrices $\gamma^\mu$ for $\mu = 0, 1, 2, 3$ are $4 \times 4$ matrices satisfying the identities
\bel{gamma-identities}
\aligned
& \{ \gamma^\mu, \gamma^\nu \} := \gamma^\mu \gamma^\nu + \gamma^\mu \gamma^\nu 
= - 2 \eta^{\mu \nu} I, 
\\
& (\gamma^\mu)^* 
= - \eta_{\mu \nu} \gamma^\nu, 
\endaligned
\ee
where $\eta = \mathrm{diag}(-1, 1, 1, 1)$.  
The Dirac matrices give a matrix representation of the Clifford algebra. We will use the Dirac representation, so that the Dirac matrices take the following form
\bel{Dirac-representation}
\gamma^0 
= \begin{pmatrix} I_2 & 0 \\ 0 & - I_2 \end{pmatrix} 
\,, 
\quad 
\gamma^i 
= \begin{pmatrix} 0 & \sigma^i \\ - \sigma^i & 0 \end{pmatrix}, 
\ee
where $\sigma^i$'s are the standard Pauli matrices: 
\be 
\sigma^1 
:=
\begin{pmatrix}
0 & 1 \\
1 & 0
\end{pmatrix},  \quad
 \sigma^2 
 :=
\begin{pmatrix}
0 & -i \\
i & 0
\end{pmatrix},  \quad
\sigma^3 
:=
\begin{pmatrix}
1 & 0 \\
0 & -1
\end{pmatrix}.
\ee
One often uses the following product of the Gamma matrices
\be 
\gamma_5 
:= i \gamma^0 \gamma^1 \gamma^2 \gamma^3 
= \begin{pmatrix}
0 & I_2 \\
I_2 & 0
\end{pmatrix}.
\ee
The $\gamma_5$ matrix squares to $I_4$ and so we also define the following projection operators 
\bel{eq:proj-ops}
P_L 
:= \frac12 \left( I_4 - \gamma_5 \right),
\qquad
P_R 
:= \frac12 \left( I_4 + \gamma_5 \right)
\ee
to extract the `left-handed' and `right-handed' parts of a spinor, that is, to extract its chiral parts. 
We also note the following useful identities\footnote{In physics, the notation $\bar{\psi} := \psi^\dagger \gamma^0$ is often
 used, but we will avoid this here.}
\be 
(\gamma^\mu)^* 
= \gamma^0 \gamma^\mu \gamma^0,
\quad
(\gamma^0 \gamma^\mu)^* 
= \gamma^0 \gamma^\mu,
\quad 
\{ \gamma_5, \gamma^\mu \} 
= 0. 
\ee 
For further details on Dirac matrices and the representation considered, see \cite{Ait-Hey}.


\subsection{Hyperboloidal foliation of Minkowski spacetime} \label{subsec:hyperboloid-defs}
 
In order to introduce the energy formula of the Dirac component $\psi$ on hyperboloids,
we first need to recall some notation from \cite{PLF-YM-book} concerning the hyperboloidal foliation method. We denote the point $(t, x) = (x^0, x^1, x^2, x^3)$ in Cartesion coordinates, with its spatial radius $r := | x | = \sqrt{(x^1)^2 + (x^2)^2 + (x^3)^2}$. We write $\del_\alpha:= \del_{x^\alpha}$ (for $\alpha=0, 1, 2, 3$) for partial derivatives and 
\be
L_a 
:= x^a \del_t + t \del_a, \qquad a= 1, 2, 3
\ee
for the Lorentz boosts. Throughout the paper, we consider functions defined in the interior of the future light cone $\mathcal{K}:= \{(t, x): r< t-1 \}$, with vertex $(1, 0, 0, 0)$. We consider hyperboloidal hypersurfaces $\mathcal{H}_s:= \{(t, x): t^2 - r^2 = s^2 \}$ with $s>1$. Also $\mathcal{K}_{[s_0, s_1]} := \{(t, x): s_0^2 \leq t^2- r^2 \leq s_1^2; r<t-1 \}$ is used to denote subsets of $\mathcal{K}$ limited by two hyperboloids.

The semi-hyperboloidal frame is defined by
\bel{eq:semi-hyper}
\underline{\del}_0
:= \del_t, \qquad \underline{\del}_a:= {L_a \over t} = {x^a\over t}\del_t+ \del_a.
\ee
Observe that the vectors $\underline{\del}_a$ generate the tangent space to the hyperboloids. We also introduce the vector field $\underline{\del}_{\perp}:= \del_t+ {x^a \over t}\del_a$ which is orthogonal to the hyperboloids.

For this semi-hyperboloidal frame above, the dual frame is given by $\underline{\theta}^0:= dt- {x^a\over t}dx^a$, $\underline{\theta}^a:= dx^a$. The (dual) semi-hyperboloidal frame and the (dual) natural Cartesian frame are connected by the relation
\bel{semi-hyper-Cts}
\underline{\del}_\alpha
= \Phi_\alpha^{\alpha'}\del_{\alpha'}, 
\quad 
\del_\alpha
= \Psi_\alpha^{\alpha'}\underline{\del}_{\alpha'}, 
\quad 
\underline{\theta}^\alpha
= \Psi^\alpha_{\alpha'}dx^{\alpha'}, 
\quad 
dx^\alpha
= \Phi^\alpha_{\alpha'}\underline{\theta}^{\alpha'},
\ee
where the transition matrix ($\Phi^\beta_\alpha$) and its inverse ($\Psi^\beta_\alpha$) are given by
\be
(\Phi_\alpha^{ \beta})
=
\begin{pmatrix}
1 & 0 &   0 &  0   \\
{x^1 / t} & 1  & 0   &  0  \\
{x^2 / t} &  0  &  1  &  0   \\
{x^3 / t} &  0 & 0   & 1
\end{pmatrix}, 
\qquad 
(\Psi_\alpha^{ \beta})
=
\begin{pmatrix}
1 & 0 &   0 &  0   \\
-{x^1 / t} & 1  & 0   &  0  \\
-{x^2 / t} &  0  &  1  &  0   \\
-{x^3 / t} &  0 & 0   & 1
\end{pmatrix}.
\ee

Throughout, we use roman font $E$ to denote energies coming from a first-order PDE (see below) and, calligraphic font $\Ecal$ to denote energies coming from a second-order PDE. In the Minkowski background, we first introduce the energy $\Ecal$ for scalar-valued or vector-valued maps $\phi$ defined on a hyperboloid $\Hcal_s$: 
\bel{eq:2energy} 
\aligned
\Ecal_m(s, \phi)
&:= \int_{\Hcal_s} \Big( |\del_t \phi |^2+ \sum_a |\del_a \phi |^2+ (x^a/t)(\del_t \phi^* \del_a \phi + \del_t \phi \del_a \phi^*)+ m^2 |\phi |^2 \Big) \, dx
\\
               &= \int_{\Hcal_s} \Big( |(s/t)\del_t \phi |^2+ \sum_a |\underline{\del}_a \phi |^2+ m^2 |\phi |^2 \Big) \, dx
                \\
               &= \int_{\Hcal_s} \Big( |\underline{\del}_\perp \phi |^2+ \sum_a |(s/t)\del_a \phi |^2+ \sum_{a<b} |t^{-1}\Omega_{ab} \phi |^2+ m^2 |\phi |^2 \Big) \, dx,
\endaligned
\ee
where 
\be
\Omega_{ab}
:= x^a\del_b- x^b\del_a 
\ee
denotes the rotational vector field. We also write $\Ecal(s, \phi):= \Ecal_0(s, \phi)$ for simplicity.  All of our integrals in $L^1$, $L^2$, etc. are defined from the standard (flat) metric in $\RR^3$, so 
\bel{flat-int}
\|\phi \|_{L^1_f(\Hcal_s)}
=\int_{\Hcal_s}|\phi | \, d S
:=\int_{\RR^3} \big|\phi(\sqrt{s^2+r^2}, x) \big| \, dx.
\ee


\subsection{Hyperboloidal energy of the Dirac equation} 
\label{subsec:dirac-energies}

We now derive a hyperboloidal energy for the Dirac equation \eqref{Dirac1}. Premultiplying the PDE \eqref{Dirac1} by $\psi^* \gamma^0$ gives
\bel{psi-times-Dirac}
\psi^* \del_0 \psi + \psi^* \gamma^0 \gamma^j \del_j \psi + i M \psi^* \gamma^0 \psi
= i \psi^* \gamma^0 F \,.
\ee 
The conjugate of \eqref{Dirac1} is 
$$
(\del_\mu \psi^*) (\gamma^\mu)^* - i \psi^* M 
= -i F^*.
$$
Multiplying this equation by $\psi$ gives
\bel{Dirac*-times-psi}
(\del_0 \psi^*) \psi + (\del_j \psi^*) \gamma^0 \gamma^j \psi - i M \psi^* \gamma^0 \psi 
= -i F^* \gamma^0 \psi.
\ee 
Adding \eqref{psi-times-Dirac} and \eqref{Dirac*-times-psi} together yields
\bel{del-Dirac}
\del_0(\psi^* \psi) + \del_j(\psi^* \gamma^0 \gamma^j \psi) 
=  i\psi^* \gamma^0 F -iF^* \gamma^0 \psi.
\ee
Note the mass term does not appear in \eqref{del-Dirac}. Moreover recalling $2\text{Re}[z] = z+\bar{z}$ for some $z \in \CC$ then we see that \eqref{del-Dirac} is the real part of \eqref{psi-times-Dirac}. It would appear however if we subtracted \eqref{psi-times-Dirac} from \eqref{Dirac*-times-psi}, that is the imaginary part of \eqref{psi-times-Dirac}, then we find
$$ 
\psi^* \del_0 \psi - \del_0 \psi^* \cdot \psi + \psi^* \gamma^0 \gamma^j \del_j \psi - \del_j \psi^* \cdot \gamma^0 \gamma^j \psi + 2iM \psi^* \gamma^0 \psi 
= i \psi^* \gamma^0 F + i F^* \gamma^0 \psi.
$$
However such an expression does not appear to be useful.  We return to \eqref{del-Dirac} and integrate over regions in spacetime to obtain energy inequalities. These give the following two definitions of first-order energy functionals. 

Integrating \eqref{del-Dirac} over $[t_0, t] \times \RR^3$, and assuming spatially compactly supported initial data, gives the following result.

\begin{lemma}
On $t=const$ slices, define the energy 
\bel{energy-flat-Dirac}
E^{\textrm{flat}}(t, \psi) 
:= \int_{\RR^3} (\psi^* \psi) (t, x) dx.
\ee
Then it holds
\be 
E^{\textrm{flat}}(t, \psi) 
= E^{\textrm{flat}}(t_0, \psi) 
+ \int_{t_0}^t \int_{\RR^3} \big(i\psi^* \gamma^0 F -iF^* \gamma^0 \psi\big) \, dxdt. 
\ee 
\end{lemma}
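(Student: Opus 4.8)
The plan is to read the claimed identity directly off the pointwise conservation law \eqref{del-Dirac}. Indeed, \eqref{del-Dirac} asserts that the spacetime vector field with time component $\psi^* \psi$ and spatial components $\psi^* \gamma^0 \gamma^j \psi$ has divergence $i\psi^* \gamma^0 F - i F^* \gamma^0 \psi$; integrating this divergence over the slab $[t_0,t]\times\RR^3$ and collecting the boundary contributions will produce exactly the statement. Before doing so I would note, for bookkeeping, that all the relevant quantities are real: $\psi^*\psi = \sum_\alpha |\psi_\alpha|^2 \ge 0$, while $(\psi^* \gamma^0 \gamma^j \psi)^* = \psi^*(\gamma^0\gamma^j)^*\psi = \psi^* \gamma^0\gamma^j\psi$ and $(i\psi^*\gamma^0 F)^* = -iF^*(\gamma^0)^*\psi = -iF^*\gamma^0\psi$, using the identities $(\gamma^0\gamma^\mu)^*=\gamma^0\gamma^\mu$ and $(\gamma^0)^*=\gamma^0$ recorded in Section~\ref{subsec:dirac-matrices}. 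In particular $E^{\textrm{flat}}(t,\psi)$ is a genuine (nonnegative) energy.

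First I would integrate \eqref{del-Dirac} in $x$ over $\RR^3$ at a fixed time $t' \in [t_0,t]$. The spatial term $\int_{\RR^3} \del_j(\psi^* \gamma^0 \gamma^j \psi)\,dx$ vanishes by the divergence theorem: under the compactly supported initial data hypothesis and the finite speed of propagation for the Dirac equation, $\psi(t',\cdot)$ has compact support for each $t'$, so the flux through spatial infinity is zero (equivalently, in the light-cone setting used later, $\psi$ is supported in $\mathcal K$). Differentiating under the integral sign — legitimate by smoothness of $\psi$ and uniform compact support on the slab — this leaves the ODE identity
\[
\frac{d}{dt'}\int_{\RR^3} (\psi^*\psi)(t',x)\,dx = \int_{\RR^3}\big(i\psi^* \gamma^0 F - i F^* \gamma^0 \psi\big)(t',x)\,dx.
\]
Then I would integrate this in $t'$ from $t_0$ to $t$, applying the fundamental theorem of calculus on the left and Fubini's theorem on the right, which yields precisely
\[
E^{\textrm{flat}}(t,\psi) = E^{\textrm{flat}}(t_0,\psi) + \int_{t_0}^t \int_{\RR^3}\big(i\psi^* \gamma^0 F - i F^* \gamma^0 \psi\big)\,dx\,dt'.
\]

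I do not expect any genuine analytic obstacle here; the statement is an immediate integrated form of \eqref{del-Dirac}. The only points deserving care are the justifications mentioned above: propagation of compact support (so that the spatial flux term truly drops out and all integrals are finite), and the regularity needed to differentiate under the integral and to apply Fubini, both of which are guaranteed for the solution class under consideration. If one prefers to avoid invoking finite speed of propagation directly, the same conclusion follows by integrating over a large spatial ball and observing that the boundary term vanishes once the ball contains the (compact) support of $\psi(t',\cdot)$ for all $t' \in [t_0,t]$.
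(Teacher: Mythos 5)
Your argument is exactly the paper's: the lemma is obtained by integrating the divergence identity \eqref{del-Dirac} over the slab $[t_0,t]\times\RR^3$, with the spatial flux term dropping out thanks to the compact support of $\psi$. Your additional remarks on realness of the integrands and on differentiating under the integral are correct but do not change the route; the proposal is fine as written.
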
 

Such functionals on a constant-time foliation have been considered frequently in the literature, see for instant \cite{D-F-S}. The following Lemma gives a new perspective using a hyperboloidal foliation. 

\begin{lemma}
On hyperboloidal slices $\Hcal_s$ define the energy
\bel{energy-Dirac1}
E^\Hcal(s, \psi) 
:= 
\int_{\Hcal_s} \left(  \psi^* \psi - \frac{x_i}{t} \psi^* \gamma^0 \gamma^i \psi \right) dx. 
\ee
For solutions to \eqref{del-Dirac} this satisfies
\bel{integrated-energy-equality}
E^\Hcal(s, \psi) 
= E^\Hcal(s_0, \psi) 
+ \int_{s_0}^s \int_{\Hcal_{\bar{s}}} (\bar{s}/t) ( i\psi^* \gamma^0 F -iF^* \gamma^0 \psi) \,dx d \bar{s}.
\ee
\end{lemma}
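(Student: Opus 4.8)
The plan is to read the identity \eqref{del-Dirac} as a spacetime divergence and then apply the divergence theorem on the truncated cone $\mathcal{K}_{[s_0,s]}$. Setting $J^0 := \psi^*\psi$ and $J^a := \psi^*\gamma^0\gamma^a\psi$, the identity \eqref{del-Dirac} reads $\del_\mu J^\mu = i\psi^*\gamma^0 F - iF^*\gamma^0\psi$ throughout $\mathcal{K}$. First I would integrate this over $\mathcal{K}_{[s_0,s]}$ and invoke the divergence theorem in $\RR^{3+1}$ equipped with its flat \emph{Euclidean} structure (not the Lorentzian one), so that the left-hand side becomes a sum of boundary flux integrals over the top slice $\Hcal_s$, the bottom slice $\Hcal_{s_0}$, and the lateral portion of the cone $\{r = t-1\}$.

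The second step is to evaluate the flux through a hyperboloid $\Hcal_{\bar s}$. Writing $\Hcal_{\bar s}$ as the graph $t = \sqrt{\bar s^2 + r^2}$ over $x \in \RR^3$, one has $\del_a \sqrt{\bar s^2 + r^2} = x^a/t$, so the outward unit normal is $(1, -x^1/t, -x^2/t, -x^3/t)$ divided by $\sqrt{1 + r^2/t^2}$, and the surface measure carries the compensating factor $\sqrt{1 + r^2/t^2}\, dx$. These cancel, and the flux of $J$ through $\Hcal_{\bar s}$ equals exactly $\int_{\Hcal_{\bar s}} \big( \psi^*\psi - (x_i/t)\,\psi^*\gamma^0\gamma^i\psi \big)\, dx = E^\Hcal(\bar s, \psi)$. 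Since $\mathcal{K}_{[s_0,s]}$ lies below $\Hcal_s$ and above $\Hcal_{s_0}$, the outward normal points upward on $\Hcal_s$ and downward on $\Hcal_{s_0}$, so the two slice contributions combine into $E^\Hcal(s,\psi) - E^\Hcal(s_0,\psi)$.

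Next I would dispose of the lateral boundary: since the data are compactly supported, finite speed of propagation confines the solution to a compact subset of each slice strictly inside the cone, hence $\psi \equiv 0$ near $\{r = t-1\}$ and that flux contribution vanishes. Finally, to cast the right-hand side in the stated form I would change variables $(t,x) \mapsto (\bar s, x)$ on $\mathcal{K}_{[s_0,s]}$ with $\bar s = \sqrt{t^2 - r^2}$; since $\del t/\del \bar s = \bar s/t$ at fixed $x$, one has $dt\, dx = (\bar s/t)\, d\bar s\, dx$, which turns $\int_{\mathcal{K}_{[s_0,s]}} \big(i\psi^*\gamma^0 F - iF^*\gamma^0\psi\big)\, dt\, dx$ into $\int_{s_0}^{s} \int_{\Hcal_{\bar s}} (\bar s/t)\big(i\psi^*\gamma^0 F - iF^*\gamma^0\psi\big)\, dx\, d\bar s$, which is precisely \eqref{integrated-energy-equality}.

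I expect the only genuinely delicate points to be bookkeeping ones: arranging the normal, the surface measure, and the orientation signs so that they combine into exactly $E^\Hcal$ with no leftover Jacobian factors, and justifying the vanishing of the lateral flux (equivalently, that $\psi$ may be treated as compactly supported on each $\Hcal_{\bar s}$, so that all the integrals converge and no boundary term at spatial infinity appears). An essentially equivalent alternative, avoiding the global divergence theorem, is to differentiate $E^\Hcal(s,\psi)$ directly in $s$ after writing it as an integral over $\RR^3$ with $t = \sqrt{s^2+r^2}$, substituting $\del_t(\psi^*\psi)$ from \eqref{del-Dirac} and integrating by parts in the $x$-variables; this reproduces the same weight $\bar s/t$ and the same support considerations.
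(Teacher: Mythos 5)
Your proposal is correct and follows essentially the same route as the paper: integrate the divergence identity \eqref{del-Dirac} over $\mathcal{K}_{[s_0,s]}$, apply the (Euclidean) divergence theorem, and observe that $n\,d\sigma = (1,-x^i/t)\,dx$ so the slice fluxes are exactly $E^\Hcal$, with the bulk term rewritten via $dt\,dx=(\bar s/t)\,d\bar s\,dx$. Your explicit treatment of the lateral boundary and the change of variables merely fills in steps the paper leaves implicit.
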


\begin{proof}
Integrating \eqref{del-Dirac} over $\mathcal{K}_{[s_0, s]}$ gives
\be 
\aligned
&\quad
\int_{\Hcal_s} 
\big( \psi^* \psi,  \psi^* \gamma^0 \gamma^j \psi \big) \cdot n \, d \sigma
- \int_{\Hcal_{s_0}} 
\big( \psi^* \psi,  \psi^* \gamma^0 \gamma^j \psi \big) \cdot n \, d \sigma
\\
&=   
\int_{\mathcal{K}_{[s_0, s]}} ( i\psi^* \gamma^0 F -iF^* \gamma^0 \psi) dt dx.
\endaligned
\ee
Here $n$ and $d \sigma$ are the unit normal and induced Lebesgue measure on the hyperboloids respectively
\bel{eq:n-dsigma}
n 
= (t^2+r^2)^{-1/2}(t, -x^i), 
\quad 
d \sigma 
= t^{-1} (t^2+r^2)^{1/2} dx. 
\ee
Using this explicit form of $n$ and $d\sigma$ gives the result. 
\end{proof}

To our knowledge the hyperboloidal Dirac energy \eqref{energy-Dirac1} is new. 
We now show that this energy $E^\Hcal(s, \psi)$ is indeed positive definite.

\begin{proposition}[Hyperboloidal energy of the Dirac equation. I]
\label{eq:positive-energy-1storder}
By defining
\begin{subequations}
\be
E^+(s, \psi) 
:= \int_{\Hcal_s} 
\Big(\psi - \frac{x_i}{t} \gamma^0 \gamma^i \psi \Big)^* 
\Big(\psi - \frac{x_j}{t} \gamma^0 \gamma^j \psi \Big) \, dx, 
\ee
one has
\bel{eq:energy-identity}
E^\Hcal (s, \psi) 
= \frac{1}{2} E^+(s, \psi) + {1 \over 2} \int_{\Hcal_s} {s^2 \over t^2} \psi^* \psi \, dx, 
\ee
which in particular implies the positivity of the energy
\bel{eq:lowbound1} 
E^\Hcal (s, \psi) 
\geq {1 \over 2} \int_{\Hcal_s} {s^2 \over t^2} \psi^* \psi \, dx 
\geq 0.
\ee
\end{subequations}
\end{proposition}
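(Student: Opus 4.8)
The plan is to establish the algebraic identity \eqref{eq:energy-identity} by expanding the quadratic form that defines $E^+(s,\psi)$ and simplifying the resulting products of Dirac matrices via the Clifford relations \eqref{gamma-identities}; the positivity bound \eqref{eq:lowbound1} is then an immediate consequence.

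First I would expand the integrand of $E^+$. Setting $v := \psi - (x_i/t)\gamma^0\gamma^i\psi$, one has
$$
v^* v = \psi^*\psi - \frac{x_j}{t}\psi^*\gamma^0\gamma^j\psi - \frac{x_i}{t}(\gamma^0\gamma^i\psi)^*\psi + \frac{x_i x_j}{t^2}(\gamma^0\gamma^i\psi)^*(\gamma^0\gamma^j\psi).
$$
Using the identity $(\gamma^0\gamma^\mu)^* = \gamma^0\gamma^\mu$ recorded above, the middle two terms combine into $-2(x_i/t)\,\psi^*\gamma^0\gamma^i\psi$, and the last term becomes $(x_i x_j/t^2)\,\psi^*\gamma^0\gamma^i\gamma^0\gamma^j\psi$. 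To treat the quartic term I would simplify $\gamma^0\gamma^i\gamma^0\gamma^j$: since $\{\gamma^0,\gamma^i\}=0$ for the spatial index $i$ one has $\gamma^i\gamma^0 = -\gamma^0\gamma^i$, hence $\gamma^0\gamma^i\gamma^0\gamma^j = -(\gamma^0)^2\gamma^i\gamma^j = -\gamma^i\gamma^j$, because $(\gamma^0)^2 = -\eta^{00}I = I$. Then, since $x_i x_j$ is symmetric in $(i,j)$, only the symmetric part $\tfrac12\{\gamma^i,\gamma^j\} = -\eta^{ij}I = -\delta^{ij}I$ contributes under the contraction, so $x_i x_j\,\gamma^0\gamma^i\gamma^0\gamma^j$ effectively equals $x_i x_j\,\delta^{ij}I = r^2 I$. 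This yields
$$
E^+(s,\psi) = \int_{\Hcal_s}\Big( \big(1 + r^2/t^2\big)\,\psi^*\psi - 2(x_i/t)\,\psi^*\gamma^0\gamma^i\psi \Big)\, dx.
$$

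Finally I would compare this with $2 E^\Hcal(s,\psi) = \int_{\Hcal_s}\big( 2\psi^*\psi - 2(x_i/t)\psi^*\gamma^0\gamma^i\psi \big)\, dx$, which gives $2 E^\Hcal(s,\psi) - E^+(s,\psi) = \int_{\Hcal_s}(1 - r^2/t^2)\,\psi^*\psi\, dx$; since $t^2 - r^2 = s^2$ on $\Hcal_s$, this equals $\int_{\Hcal_s}(s^2/t^2)\,\psi^*\psi\, dx$, which is exactly \eqref{eq:energy-identity}. The bound \eqref{eq:lowbound1} then follows at once, since the integrand of $E^+$ is $|v|^2 \geq 0$ and the integrand $(s^2/t^2)\psi^*\psi$ is manifestly nonnegative. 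As this is essentially a direct verification, there is no real obstacle; the only step needing a little care is the Clifford-algebra bookkeeping — justifying $\gamma^0\gamma^i\gamma^0\gamma^j = -\gamma^i\gamma^j$ and discarding the antisymmetric part of $\gamma^i\gamma^j$ when contracted against the symmetric tensor $x_i x_j$ — together with keeping index placements consistent (the lowered $x_i$ coincides with $x^i$ here, since spatial indices are raised/lowered with the Euclidean metric).
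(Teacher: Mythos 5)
Your proof is correct and follows essentially the same route as the paper's: expand the quadratic form defining $E^+$, use $(\gamma^0\gamma^i)^*=\gamma^0\gamma^i$ for the cross terms, reduce $\gamma^0\gamma^i\gamma^0\gamma^j$ to $-\gamma^i\gamma^j$ and keep only the symmetrized part $-\delta^{ij}I$ under the contraction with $x_ix_j$, and finally use $t^2-r^2=s^2$ on $\Hcal_s$. The Clifford-algebra bookkeeping you flag is exactly the computation the paper performs, so there is nothing to add.
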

 
\begin{proof}
Expanding out the bracket in $E^+(s, \psi)$ gives 
\begin{align*}
E^+(s, \psi) 
& :=
\int_{\Hcal_s} \Big( \psi^* \psi - 2 \frac{x_i}{t} \psi^* \gamma^0 \gamma^i \psi  + \psi^* \frac{x^i x^k}{t^2} \gamma^0 \gamma^j \gamma^0 \gamma^l \psi \delta_{ij} \delta_{kl} \Big) \, dx
\\
& = \int_{\Hcal_s} \Big( \psi^* \psi - 2 \frac{x_i}{t} \psi^* \gamma^0 \gamma^i \psi  - \psi^* \frac{x_i x_j}{t^2} \gamma^{(i} \gamma^{j)} \psi \Big) \, dx
\\
& = \int_{\Hcal_s} \Big( \psi^* \psi - 2 \frac{x_i}{t} \psi^* \gamma^0 \gamma^i \psi + \psi^* \psi \frac{x^i x^j}{t^2} \delta_{ij} \Big) \, dx
\\
& = \int_{\Hcal_s} \Big( \psi^* \psi - 2 \frac{x_i}{t} \psi^* \gamma^0 \gamma^i \psi + \psi^* \psi \frac{t^2-s^2}{t^2} \Big) \, dx
 = 2 E^\Hcal (s, \psi) - \int_{\Hcal_s} \frac{s^2}{t^2} \psi^* \psi \, dx, 
\end{align*}
and re-arranging gives \eqref{eq:energy-identity}.
\end{proof}

Using the above positivity property we can now establish the following energy inequality.

\begin{proposition}[Hyperboloidal energy estimate for Dirac spinor]
\label{prop-hyperboloidal-energy-inequality-Dirac}
For the massive Dirac spinor described by \eqref{Dirac1} the following estimate holds
\bel{eq:1st-EE} 
E^\Hcal (s, \psi)^{1/2} 
\leq E^\Hcal (s_0, \psi)^{1/2} + \int_{s_0}^s \| F \|_{L^2_f(\Hcal_{\bar{s}})} \,d\bar{s}.
\ee
\end{proposition}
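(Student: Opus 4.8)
The plan is to combine the energy identity \eqref{integrated-energy-equality} from the previous lemma with the positivity result of Proposition \ref{eq:positive-energy-1storder}, turning the integral equality for $E^\Hcal(s,\psi)$ into a differential inequality for its square root. First I would differentiate \eqref{integrated-energy-equality} in $s$ (or, equivalently, work directly with the integrated form) to obtain
\[
\frac{d}{ds} E^\Hcal(s,\psi) = \int_{\Hcal_s} (s/t)\big( i\psi^* \gamma^0 F - i F^* \gamma^0 \psi \big)\, dx.
\]
The right-hand side is manifestly real (it is twice the real part of $i\psi^*\gamma^0 F$), and I would bound it using $|\gamma^0|=1$ and Cauchy--Schwarz on $\Hcal_s$:
\[
\Big| \int_{\Hcal_s} (s/t)\big( i\psi^* \gamma^0 F - i F^* \gamma^0 \psi \big)\, dx \Big| \leq 2 \int_{\Hcal_s} (s/t)\, |\psi|\, |F|\, dx \leq 2 \big\| (s/t)\psi \big\|_{L^2_f(\Hcal_s)} \, \| F \|_{L^2_f(\Hcal_s)}.
\]

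Next I would invoke the lower bound \eqref{eq:lowbound1}, which gives precisely $\big\| (s/t)\psi \big\|_{L^2_f(\Hcal_s)}^2 = \int_{\Hcal_s} (s^2/t^2)\psi^*\psi\, dx \leq 2\, E^\Hcal(s,\psi)$. Hence the flux term is controlled by $2\sqrt{2}\, E^\Hcal(s,\psi)^{1/2}\, \|F\|_{L^2_f(\Hcal_s)}$. Wait --- to get the clean constant $1$ in \eqref{eq:1st-EE} one wants $\big\| (s/t)\psi \big\|_{L^2_f(\Hcal_s)} \leq E^\Hcal(s,\psi)^{1/2}$, so I would instead note that \eqref{eq:energy-identity} together with $E^+\geq 0$ actually gives $\int_{\Hcal_s}(s^2/t^2)\psi^*\psi\,dx \leq 2 E^\Hcal(s,\psi)$, and track constants carefully; the stated inequality presumably follows by using the sharper observation that $E^\Hcal(s,\psi) \geq \int_{\Hcal_s}\psi^*\psi\,dx - \int_{\Hcal_s}(r/t)|\psi|^2\,dx$ bounds are not needed — rather, one writes the flux integrand directly against $E^\Hcal$ via $|i\psi^*\gamma^0 F - iF^*\gamma^0\psi| \le 2|\psi||F|$ and $(s/t)|\psi| \le $ the pointwise integrand of $E^\Hcal$... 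Cleanest: since $\frac{d}{ds}\big(E^\Hcal{}^{1/2}\big) = \frac{1}{2} E^\Hcal{}^{-1/2} \frac{d}{ds}E^\Hcal$, combining the two displayed bounds yields $\big| \frac{d}{ds} E^\Hcal(s,\psi)^{1/2} \big| \le \|F\|_{L^2_f(\Hcal_s)}$, and then integrating from $s_0$ to $s$ gives \eqref{eq:1st-EE}.

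The main technical point --- and the step I expect to require the most care --- is the passage from the integral identity to the differentiated form and the justification of differentiating $E^\Hcal(s,\psi)^{1/2}$ where $E^\Hcal$ may vanish; the standard remedy is to work with $E^\Hcal(s,\psi) + \delta$ for $\delta>0$, derive the inequality, and let $\delta \to 0$, or equivalently to argue directly from \eqref{integrated-energy-equality} that $E^\Hcal(s,\psi) \le \big( E^\Hcal(s_0,\psi)^{1/2} + \int_{s_0}^s \|F\|_{L^2_f(\Hcal_{\bar s})}\,d\bar s \big)^2$ via a Gronwall-type bootstrap. The other point worth a line is verifying that the flux term is genuinely real so that only its modulus (not a complex absolute value with cross terms) enters, which follows from $(\gamma^0\gamma^\mu)^* = \gamma^0\gamma^\mu$ recorded earlier; the rest is a routine Cauchy--Schwarz estimate on $\Hcal_s$ using the flat measure \eqref{flat-int} and the positivity bound \eqref{eq:lowbound1}.
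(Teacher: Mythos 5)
Your argument is correct and is essentially the paper's own proof: differentiate the integrated identity \eqref{integrated-energy-equality}, bound the flux term by Cauchy--Schwarz, control $\|(s/t)\psi\|_{L^2_f(\Hcal_s)}$ by $E^\Hcal(s,\psi)^{1/2}$ via Proposition \ref{eq:positive-energy-1storder}, and integrate the resulting inequality for $\frac{d}{ds}E^\Hcal(s,\psi)^{1/2}$. Your extra care about the factor $\sqrt{2}$ implicit in \eqref{eq:lowbound1} and about differentiating the square root where $E^\Hcal$ may vanish goes slightly beyond the paper, which simply asserts $\|(s/t)\psi\|_{L^2_f(\Hcal_s)}\leq E^\Hcal(s,\psi)^{1/2}$; these refinements are harmless since the estimate is only ever applied up to absolute constants.
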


\begin{proof}
Differentiating \eqref{integrated-energy-equality} with respect to $s$ yields
$$ 
\aligned
E^\Hcal(\bar{s}, \psi)^{1/2} \frac{d}{d \bar{s}} E^\Hcal(\bar{s}, \psi)^{1/2} 
& \leq 
{1\over 2}\int_{\Hcal_{\bar{s}}} (\bar{s}/t) \big(| F^* \gamma^0 \psi | + | \psi^* \gamma^0 F| \big) \,dx
\\
& \leq 
\| (\bar{s}/t) \psi \|_{L^2_f(\Hcal_{\bar{s}})} \| F \|_{L^2_f(\Hcal_{\bar{s}})}.
\endaligned
$$
Recalling in Proposition \ref{eq:positive-energy-1storder}, we have $\| (\bar{s}/t) \psi \|_{L^2_f(\Hcal_{\bar{s}})} \leq E^\Hcal(\bar{s}, \psi)^{1/2}$. Thus we have
$$ 
\frac{d}{d \bar{s}} E^\Hcal(\bar{s}, \psi)^{1/2} 
\leq \| F \|_{L^2_f(\Hcal_{\bar{s}})}, 
$$
and the conclusion follows by integrating over $[s_0, s]$. 
\end{proof}


\subsection{Hyperboloidal energy based on the second-order formulation} 
\label{subsec:2ndorder-dirac-energy}

Finally in this section we will convert the Dirac equation into a second-order PDE and define associated energy functionals. Apply the first-order Dirac operator $-i \gamma^\nu \partial_\nu$ to the Dirac equation \eqref{Dirac1} and use the identity \eqref{gamma-identities} to obtain
\be 
\eta^{\mu \nu} \del_\mu \del_\nu \psi + M(-i \gamma^\nu \del_\nu \psi) 
= - i \gamma^\nu \del_\nu F.
\ee
Substituting the PDE into the bracketed term gives the following second-order PDE
\bel{eq:psi-second-order}
\Box \psi - M^2 \psi 
= -MF - i \gamma^\nu \del_\nu F.
\ee
This provides us with another approach for deriving an energy estimate for the Dirac equation. 
We now check the hyperboloidal energy coming from \eqref{eq:psi-second-order}. 

\begin{lemma}[Second-order hyperboloidal energy estimate for the Dirac equation]
\label{lem:KG-energy-est}
For all solution $\psi \in \CC^4$ of 
\begin{subequations}
\bel{eq:psi-second-order-G}
\Box \psi - M^2 \psi 
= G,
\ee
one has 
\be 
\Ecal_M (s, \psi)^{1/2} 
\leq \Ecal_M(s_0, \psi)^{1/2} + \int_{s_0}^s \| G \|_{L^2(\Hcal_\tau)} d \tau,
\ee
where
\bel{energy-Dirac2}
\Ecal_M(s, \psi) 
:=
\int_{\Hcal_s} 
\Big( |\del_t \psi|^2 + \sum_i |\del_i \psi|^2 + M^2|\psi|^2 
+ 2 \sum_i \frac{x^i}{t} \mathrm{Re} \big[\del_t \psi \del_i \psi^* \big] \Big) dx.
\ee
\end{subequations}
\end{lemma}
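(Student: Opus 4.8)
The plan is to recognize \eqref{energy-Dirac2} as precisely the scalar-field hyperboloidal energy $\Ecal_M(s,\psi)$ already introduced in \eqref{eq:2energy}, now applied componentwise to the $\CC^4$-valued map $\psi$, and then to invoke the standard hyperboloidal energy estimate for the Klein–Gordon operator $\Box - M^2$ acting on such maps. Indeed, comparing \eqref{energy-Dirac2} with the first line of \eqref{eq:2energy}, the integrand $|\del_t\psi|^2 + \sum_i|\del_i\psi|^2 + M^2|\psi|^2 + 2\sum_i (x^i/t)\,\mathrm{Re}[\del_t\psi\,\del_i\psi^*]$ matches $|\del_t\phi|^2 + \sum_a|\del_a\phi|^2 + (x^a/t)(\del_t\phi^*\del_a\phi + \del_t\phi\,\del_a\phi^*) + m^2|\phi|^2$ verbatim, since $2\,\mathrm{Re}[z] = z + \bar z$. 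Hence $\Ecal_M(s,\psi)$ is genuinely the energy \eqref{eq:2energy}, and in particular the equivalent forms given there show it is positive definite and controls $\|(s/t)\del_t\psi\|_{L^2_f(\Hcal_s)}^2 + \sum_a\|\underline\del_a\psi\|_{L^2_f(\Hcal_s)}^2 + M^2\|\psi\|_{L^2_f(\Hcal_s)}^2$.

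First I would carry out the standard multiplier computation: multiply \eqref{eq:psi-second-order-G} by $\del_t\psi^*$, add the conjugate, and rewrite the result as a spacetime divergence. Concretely, $2\,\mathrm{Re}\big[(\Box\psi - M^2\psi)\,\del_t\psi^*\big] = \del_\mu V^\mu$ for the standard energy current $V^\mu$ whose $t=$const flux is the ordinary Klein–Gordon energy density and whose flux through $\Hcal_s$, after inserting the explicit normal $n$ and measure $d\sigma$ from \eqref{eq:n-dsigma}, is exactly the integrand of \eqref{energy-Dirac2}. Integrating this identity over the truncated cone $\mathcal{K}_{[s_0,s]}$ and using that $\psi$ is compactly supported (so the timelike boundary $r = t-1$ contributes nothing, or rather the finite-speed-of-propagation cone does), one obtains
\be
\Ecal_M(s,\psi) = \Ecal_M(s_0,\psi) + \int_{s_0}^s \int_{\Hcal_\tau} (\tau/t)\,2\,\mathrm{Re}\big[G\,\del_t\psi^*\big]\,dx\,d\tau.
\ee
Then I would differentiate in $s$, bound the right-hand integrand by $2\|(\tau/t)\del_t\psi\|_{L^2_f(\Hcal_\tau)}\|G\|_{L^2_f(\Hcal_\tau)} \le 2\,\Ecal_M(\tau,\psi)^{1/2}\|G\|_{L^2_f(\Hcal_\tau)}$ via Cauchy–Schwarz and the control of $\|(\tau/t)\del_t\psi\|$ by $\Ecal_M^{1/2}$ noted above, divide by $2\,\Ecal_M(\tau,\psi)^{1/2}$, and integrate over $[s_0,s]$ to reach the claimed inequality.

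I do not expect a serious obstacle here; this is the Dirac analogue, applied component-by-component to $\psi\in\CC^4$, of the well-known hyperboloidal energy estimate for scalar Klein–Gordon fields from \cite{PLF-YM-book}. The one point deserving a line of care is the justification that the flux of the energy current through $\Hcal_s$, written in the coordinates $dx$ on $\RR^3$ via $d\sigma = t^{-1}(t^2+r^2)^{1/2}dx$ and $n = (t^2+r^2)^{-1/2}(t,-x^i)$, collapses exactly to \eqref{energy-Dirac2} — in particular that the cross term $2(x^i/t)\mathrm{Re}[\del_t\psi\,\del_i\psi^*]$ has the correct sign so that $\Ecal_M$ is manifestly the positive-definite expression \eqref{eq:2energy}; this follows from the same algebra that produces the three equivalent forms in \eqref{eq:2energy}, e.g. completing the square to write the integrand as $|(s/t)\del_t\psi|^2 + \sum_a|\underline\del_a\psi|^2 + M^2|\psi|^2$.
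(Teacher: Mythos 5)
Your proposal is correct and follows essentially the same route as the paper: the multiplier $\del_t\psi^*$ (the paper uses $-\del_t\psi^*$, an immaterial sign choice), the divergence identity integrated over $\mathcal{K}_{[s_0,s]}$ with the explicit $n$ and $d\sigma$ from \eqref{eq:n-dsigma}, the identification of the flux integrand with the positive-definite form of \eqref{eq:2energy}, and the Cauchy--Schwarz/differentiation step to close the estimate. No gaps.
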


\begin{proof}
The conjugate of \eqref{eq:psi-second-order-G} reads 
$
\Box \psi^* - M^2 \psi^* 
= G^*.
$
Using $-\del_t \psi^*$ and $-\del_t \psi$ as multipliers on \eqref{eq:psi-second-order-G} and its conjugate respectively we obtain
$$ 
\aligned
\del_t \psi^* \del^2_t \psi - \del_t \psi^* \sum_i \del_i^2 \psi + M^2 \del_t \psi^* \cdot \psi 
&= - \del_t \psi^* G,
\\
\del^2_t \psi^* \del_t \psi - \sum_i \del_i^2 \psi^* \del_t \psi + M^2 \del_t \psi \cdot \psi^* 
&= - G^* \del_t \psi.
\endaligned
$$
Adding these two equations together gives
$$ 
\aligned
&\quad 
\del_t \big( \del_t \psi^* \del_t \psi + \del^i \psi^* \del_i \psi + M^2\psi^* \psi \big) 
- \sum_i \del_i \big( \del_t \psi^* \del_i \psi + \del_t \psi \del_i \psi^* \big) 
\\
&= - \big( \del_t \psi^* G + G^* \del_t \psi) = - 2 \text{Re} \big[G^* \del_t \psi \big].
\endaligned
$$
Integrating this equation in the region $\mathcal{K}_{[s_0, s]}$, we have 
$$ 
\aligned
& \int_{\Hcal_s} 
\Big( |\del_t \psi|^2 + \sum_i |\del_i \psi|^2 + M^2|\psi|^2, - (\del_t \psi^* \del_i \psi + \del_i \psi^* \del_t \psi) \Big) \cdot n \, d \sigma
\\
& - \int_{\Hcal_{s_0}} 
\Big(|\del_t \psi|^2 + \sum_i |\del_i \psi|^2 + M^2|\psi|^2, - (\del_t \psi^* \del_i \psi + \del_i \psi^* \del_t \psi) \Big) \cdot n \, d \sigma
\\
&  =  - 2 \int_{\mathcal{K}_{[s_0, s]}}  \text{Re} \big[ G^* \del_t \psi \big] dt dx.
\endaligned
$$
Using the explicit form of $n$ and $d\sigma$  given in \eqref{eq:n-dsigma} and noting that $2 \text{Re}[\del_i \psi^* \del_t \psi] = \del_t \psi^* \del_i \psi + \del_i \psi^* \del_t \psi$ we find
\bel{eq:second-order-energy-equality}
\Ecal_M(s, \psi) - \Ecal_M(s_0,\psi)
= - 2 \int_{s_0}^s \int_{\Hcal_t} \text{Re} \big[ G^* \del_t \psi \big] dt dx.
\ee
We next use the change of variable formula $\del_i = \underline{\del}_i - (x^i/t)\del_t$ to rewrite the energy term:
$$ 
\aligned
& \int_{\Hcal_s} 
\Big( |\del_t \psi|^2 + \sum_i |\del_i \psi|^2 + M^2|\psi|^2 + \frac{x^i}{t} \big( \del_t \psi^* \del_i \psi + \del_i \psi^* \del_t \psi \big) \Big) dx
\\
& = \int_{\Hcal_s}
\Big( |(s/t) \del_t \psi |^2 + \sum_i | \underline{\del}_i \psi |^2 + M^2 |\psi|^2 \Big) dx.
\endaligned
$$
We can now estimate the nonlinearity on the RHS using the change of variables $\tau = (t^2-r^2)^{1/2}$ and $dt dx = (\tau/t) d\tau dx$. In particular we have
$$ 
\aligned
-2 \int_{\Hcal_\tau} \text{Re}\big[ G^* \del_t \psi \big] (\tau/t) \, dx
& \leq
2 \| (\tau/t) \del_t \psi \|_{L^2(\Hcal_\tau)} \| G \|_{L^2(\Hcal_\tau)}
\\
& \leq 2 \Ecal_M(\tau, \psi)^{1/2}  \| G \|_{L^2(\Hcal_\tau)}.
\endaligned
$$
Thus by differentiating \eqref{eq:second-order-energy-equality} and using the above we have
$$ 
\frac{d}{d\tau} \Ecal_M(\tau, \psi)^{1/2} 
\leq \| G \|_{L^2(\Hcal_\tau)}.
$$
Integrating this expression over $[s_0, s]$ gives the desired result. 
\end{proof}

We end with a short remark here on positive and negative mass spinors.
In the original first-order Dirac equation \eqref{Dirac1} the mass $M$ was defined as a real parameter with no sign restriction. Furthermore the mass did not appear in the hyperboloidal energy $E^\Hcal$ defined in \eqref{energy-Dirac1}. This implies that spinors with equal masses but opposite signs ($\pm M$) would still obey the same energy estimates of Proposition \ref{prop-hyperboloidal-energy-inequality-Dirac}. 

This is consistent with the second-order equation \eqref{eq:psi-second-order} for the Dirac field. In this equation the mass appears squared, so spinors with equal masses, but of opposite signs, obey the \textit{same} second order equation. Moreover the mass $M^2$ appears in the second-order hyperboloidal energy expression $\Ecal_M$ in \eqref{energy-Dirac2}. 

Thus either the mass $M$ should not appear in $E^\Hcal$, as we have found, or if the mass $M$ were to appear in $E^\Hcal$, it would necessarily need to be invariant under a sign change, so as to agree with the second-order energy estimates involving $\Ecal_M$. 


\section{Additional properties of Dirac spinors on hyperboloids} 
\label{sec:choleksy-weyl}

\subsection{Hyperboloidal energy based on a Cholesky decomposition} 
\label{subsec:choleksy}

Our first task is to obtain a hyperboloidal energy for the Dirac field $\psi$ expressed in terms of a product of complex vectors $z(\psi)^* z(\psi)$. Such an expression is then easily seen to be positive semi-definite which, clearly, is in contrast to the form given in \eqref{energy-Dirac1} and Proposition \ref{eq:positive-energy-1storder}. 

Recall the standard Cholesky decomposition: any Hermitian, positive-definite matrix $A$ can be decomposed in a unique way as 
\be 
A = P^* P, 
\ee
where $P$ is a lower triangular matrix with real and positive diagonal entries. In particular if $A$ is positive semi-definite then the decomposition exists however one loses uniqueness and the diagonal entries of $P$ may be zero. 


We now prove the following result. 

\begin{proposition}[Hyperboloidal energy for the Dirac equation. II] 
\label{appendix:hyperboloidal-energy-P-matrix}
There exists a lower triangular matrix $P$ with real and positive diagonal entries such that
\be 
E^\Hcal(s, \psi) 
= \int_{\Hcal_s} (P \psi) ^* (P \psi ) dx, 
\ee
and specifically 
\bel{eq:explicitP}
P = 
\begin{pmatrix}
	s/t & 0 & 0 & 0 \\
	0 & s/t & 0 & 0 \\
	x^3/t & x^1/t -i x^2/t & 1 & 0 \\
	x^1/t +i x^2/t & - x^3/t & 0 & 1
\end{pmatrix}, 
\ee
which can also be expressed as
\be 
P 
= \frac{(s/t)+1}{2} I_4 + \frac{(s/t)-1}{2} \gamma^0 + \delta_{ij} \frac{x^i}{t}\gamma^0 \gamma^j.
\ee
\end{proposition}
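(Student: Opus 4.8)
The plan is to reduce the statement to a pointwise linear algebra fact about the Hermitian matrix defining the quadratic form $E^\Hcal$. Set $M = M(t,x) := I_4 - \tfrac{x_i}{t}\gamma^0\gamma^i$; by the identity $(\gamma^0\gamma^\mu)^* = \gamma^0\gamma^\mu$ from \eqref{gamma-identities} the matrix $M$ is Hermitian, and by \eqref{energy-Dirac1} one has $E^\Hcal(s,\psi) = \int_{\Hcal_s}\psi^* M\psi\,dx$. It therefore suffices to prove that, at every $(t,x)$ in the interior of $\mathcal{K}$, the matrix $M$ is Hermitian positive-definite and that the matrix $P$ of \eqref{eq:explicitP} is its (necessarily unique) Cholesky factor $M = P^*P$; integrating over $\Hcal_s$ then yields $E^\Hcal(s,\psi) = \int_{\Hcal_s}(P\psi)^*(P\psi)\,dx$.

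For positive-definiteness I would not diagonalise $M$ from scratch but instead invoke Proposition~\ref{eq:positive-energy-1storder}: the proof of \eqref{eq:energy-identity} shows, pointwise, that $\psi^* M\psi = \tfrac12\big|(I_4 - \tfrac{x_i}{t}\gamma^0\gamma^i)\psi\big|^2 + \tfrac12 (s/t)^2 |\psi|^2$, which is strictly positive whenever $\psi\neq 0$ because $s/t > 0$ strictly inside $\mathcal{K}$. (Equivalently, one can note that $M$ has the two eigenvalues $1\pm r/t$, both positive since $r<t-1<t$.) The Cholesky theorem recalled just before the statement then supplies a \emph{unique} lower-triangular $P$ with real, positive diagonal satisfying $M = P^*P$.

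To identify this $P$ with \eqref{eq:explicitP}, I would pass to the Dirac representation \eqref{Dirac-representation} and use the $2\times2$ block structure. Since $\gamma^0\gamma^i = \begin{pmatrix}0 & \sigma^i\\ \sigma^i & 0\end{pmatrix}$, writing $A := \tfrac1t x_i\sigma^i$ makes $M$ a block $2\times2$ matrix with diagonal blocks $I_2$ and off-diagonal blocks coming from $-\tfrac{x_i}{t}\gamma^0\gamma^i$; here $A$ is Hermitian and, by the Pauli identity $(x_i\sigma^i)^2 = |x|^2 I_2$, satisfies $A^2 = (r/t)^2 I_2$. Solving $P^*P = M$ block by block then forces the lower-right block of $P$ to equal $I_2$ (from the $(2,2)$ block), the lower-left block of $P$ to be read off directly from the $(2,1)$ block of $M$, and the upper-left block $P_{11}$ to satisfy $P_{11}^* P_{11} = I_2 - A^2 = (s^2/t^2) I_2$, hence $P_{11} = (s/t) I_2$ by positivity. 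Reassembling the blocks, and using $s^2 + r^2 = t^2$, produces exactly \eqref{eq:explicitP}; the equivalent expression follows by observing that $\tfrac{(s/t)+1}{2} I_4 + \tfrac{(s/t)-1}{2}\gamma^0$ reproduces the diagonal blocks $\mathrm{diag}\big((s/t) I_2,\, I_2\big)$ while $\delta_{ij}\tfrac{x^i}{t}\gamma^0\gamma^j$ supplies the off-diagonal contribution $A$.

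No genuine obstacle is expected: everything here is elementary, and by the uniqueness clause of the Cholesky theorem it is enough in practice to \emph{verify} by a direct multiplication that the $P$ displayed in \eqref{eq:explicitP} satisfies $P^*P = M$ — no actual derivation of $P$ is required. The two points that do need care are: (a) the positivity must be \emph{strict}, which is precisely what forces the diagonal entries of $P$ to be strictly positive rather than merely nonnegative (the degenerate case $s=0$ on the light cone is excluded since we work in the interior of $\mathcal{K}$); and (b) the $2\times2$ block bookkeeping together with the sign conventions for $\gamma^0$ and $\gamma^i$, so that the explicit factor coincides with \eqref{eq:explicitP}.
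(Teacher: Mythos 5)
Your proposal follows essentially the same route as the paper's proof: the paper likewise first establishes positive semi-definiteness of the integrand matrix (your invocation of Proposition \ref{eq:positive-energy-1storder} is just a repackaging of the paper's Step 1 computation with $(Az)^*(Az)$), and then solves $P^*P = A$ block-by-block in the Dirac representation, obtaining $D = I_2$, $C = N_j\sigma^j$ and $B^*B = (s/t)^2 I_2$ exactly as you describe. The only caveat — already present in the paper itself, whose proof writes the integrand as $A = I_4 + N_j\gamma^0\gamma^j$ despite the minus sign in \eqref{energy-Dirac1} — is that the displayed $P$ of \eqref{eq:explicitP} factors $I_4 + \tfrac{x_i}{t}\gamma^0\gamma^i$ rather than your $M = I_4 - \tfrac{x_i}{t}\gamma^0\gamma^i$, so the direct verification you propose would surface that sign discrepancy (harmless, since the two matrices are exchanged by conjugation with $\gamma^0$, equivalently by $x \mapsto -x$ in the lower-left block of $P$).
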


The above expression is quite natural and resembles what is known for the wave equation: the factor $x^i/t$ comes from Stokes' theorem applied to hyperboloids and we cannot expect to fully control the standard $L^2$ norm, namely
$
\int_{\Hcal_s} \psi^* \psi \, dx. 
$

\begin{proof} {\bf Step 1. Existence of the decomposition.} Before we proceed with the derivation of the identity, we present an argument showing that such a decomposition exists by proving positive semi-definiteness. For simplicity of notation, let $N_i := x^i/t$.
The integrand of $E^\Hcal(s, \psi)$ can be written as $\psi^* A \psi $ where $A:= I_4 + N_j \gamma^0 \gamma^j$. Here the spatial indices are contracted with $\delta_{ij}$, so that $N_j \gamma^j = \delta_{ij} N^i \gamma^j$.   
Note $A$ is hermitian since $A^* = I_4 + N_j (\gamma^0 \gamma^j)^* = A$.
Also
$$ 
(N_j \gamma^0 \gamma^j)(N_k \gamma^0 \gamma^k)
= - N_j N_k \gamma^0 \gamma^0 \gamma^j \gamma^k 
= - N_j N_k \gamma^{(j} \gamma^{k)} 
= N_j N^j I_4.
$$
Then for all $z \in \CC^4$ we have
$$
\aligned
0 
\leq \big( A z \big)^* \big( A z \big)
& = (1+N_j N^j) z^* I_4 z  + 2 z^* N_j \gamma^0 \gamma^j z 
\\
& \leq 2 \left( z^* I_4 z + z^* N_j \gamma^0 \gamma^j z \right) 
\\
& = 2 z^* A z.
\endaligned
$$
We used that $N_j N^j  = (r/t)^2 \leq 1$ which holds in the light-cone $\mathcal{K}$. Thus $A$ is positive semi-definite.

\vskip.3cm

{\bf Step 2. Computing the matrix $P$.} 
With respect to the Dirac representation \eqref{Dirac-representation} we have
$$ 
\aligned
A 
& = I_4 + N_j 
	\begin{pmatrix} I_2 & 0_2 \\ 0_2 & - I_2 \end{pmatrix} 
	\begin{pmatrix} 0_2 & \sigma^j \\ - \sigma^j & 0_2 \end{pmatrix}
\\
& = \begin{pmatrix} I_2 & 0_2 \\ 0_2 & I_2 \end{pmatrix} 
	+ N_j \begin{pmatrix} 0_2 & \sigma^j \\ \sigma^j & 0_2 \end{pmatrix}.
\endaligned
$$ 
Here $I_2$ and $0_2$ represent the $2 \times 2$ identity and zero matrices respectively. 
Calculate the second term above using the Pauli matrices:
$$ 
N_j \begin{pmatrix} 0 & \sigma^j \\ \sigma^j & 0 \end{pmatrix} 
= N_1 \begin{pmatrix} 0 & 1 \\ 1 & 0 \end{pmatrix} 
	+ N_2 \begin{pmatrix} 0 & -i \\ i & 0 \end{pmatrix}
	+ N_3 \begin{pmatrix} 1 & 0 \\ 0 & -1 \end{pmatrix}
= \begin{pmatrix} N_3 & N_1 - i N_2 \\ N_1 + i N_2 & - N_3 \end{pmatrix}.
$$
Define $\omega := N_1 + i N_2$ and recall $N_i \in \RR$. Thus we have
$$ 
A = 
\begin{pmatrix} I_2 
	& \begin{matrix} N_3 & \bar{\omega} \\ \omega & - N_3 \end{matrix} \\
	\begin{matrix} N_3 & \bar{\omega} \\ \omega & - N_3 \end{matrix} 
	& I_2
\end{pmatrix}.
$$
Consider now $2\times 2$ complex matrices $B, C, D$ such that 
$$ 
\begin{pmatrix} B & 0 \\ C & D \end{pmatrix} ^ *
\begin{pmatrix} B & 0 \\ C & D \end{pmatrix}
= 
\begin{pmatrix} I_2 
	& \begin{matrix} N_3 & \bar{\omega} \\ \omega & - N_3 \end{matrix} \\
	\begin{matrix} N_3 & \bar{\omega} \\ \omega & - N_3 \end{matrix} 
	& I_2
\end{pmatrix}.
$$
This implies the following identities
$$ 
\aligned
D^* D & = I_2, \\
C^* D & = D^* C = 
	\begin{pmatrix} N_3 & \bar{\omega} \\ \omega & - N_3 \end{pmatrix}, \\
B^* B + C^* C & = I_2.
\endaligned
$$
If we let $D = I_2$ and $C = \begin{pmatrix} N_3 & \bar{\omega} \\ \omega & - N_3 \end{pmatrix}$ then we must solve
$$ 
B^* B  = I_2 - 
	\begin{pmatrix} N_3 & \bar{\omega} \\ \omega & - N_3 \end{pmatrix}^*
	\begin{pmatrix} N_3 & \bar{\omega} \\ \omega & - N_3 \end{pmatrix}	
 = \lambda \begin{pmatrix} 
		1 & 0 \\
		0 & 1
	\end{pmatrix},
$$
where $\lambda := 1 - (N_3^2 + \bar{\omega} \omega )  = 1 - (N_1^2 + N_2 ^2 + N_3 ^2 )$. Indeed $\lambda = 1 - (r/t)^2 = (s/t)^2 \geq 0$ so we can take $B = \sqrt{\lambda}I_2 = (s/t) I_2$. 
\end{proof}


\subsection{Hyperboloidal energy based on the Weyl spinor representation} 
\label{subsec:Weyl}

Yet one more approach in deriving energy estimates is obtained by expressing the Dirac spinors in terms of Weyl spinors and then studying the energy of Weyl spinors \eqref{eq:energy-Weyl} instead. This  provides another convenient way to study Dirac equations.
Decompose the spinor $\psi$ and source term $F$ as 
\bel{Weyl-spinors}
\psi 
= \begin{pmatrix} u+v \\ u-v \end{pmatrix}, 
\quad 
F 
= \begin{pmatrix} F_+ + F_- \\ F_+ - F_- \end{pmatrix}, 
\ee
where $u,v:\RR^{1+3} \to \CC^2$ are Weyl spinors and $F_\pm \in \CC^2$. 
Defining $\del_\pm := \del_0 \pm \sigma^i \del_i$ the PDE \eqref{Dirac1} can be shown to be equivalent to
\bel{Weyl1} 
\aligned
\del_- v + iMu 
&= iF_+,
\\
\del_+ u + iMv 
&= i F_-.
\endaligned
\ee
A Dirac-Klein-Gordon system with respect to such a Weyl spinor decomposition has been studied, albeit in the low-regularity setting, by Bournaveas \cite{Bournaveas}. 
Following a similar approach to Section \ref{subsec:dirac-energies} we find an analogous hyperboloidal Weyl spinor energy:
\bel{eq:energy-Weyl} 
E^\sigma_\pm(s, u) 
:= \int_{\Hcal_s} \big( u^*u \pm \frac{x_j}{t} u^* \sigma^j u \big) \, dx.
\ee
Similar to Propositions \ref{eq:positive-energy-1storder} and \ref{prop-hyperboloidal-energy-inequality-Dirac} we can prove positivity and an energy estimate for $E^\sigma_\pm$. 

\begin{proposition}
For a $\CC^2$--valued function $w$ the following holds:
\be 
E^\sigma_\pm(s, w) 
\geq \frac{1}{2} \int_{\Hcal_s} \frac{s^2}{t^2} w^* w \,dx \geq 0.
\ee
Furthermore for solutions $u, v$ to \eqref{Weyl1} we have
\be 
\big(E^\sigma _+ (s, u) + E^\sigma _- (s, v) \big)^{1/2} 
\leq \big(E^\sigma _+ (s_0, u) + E^\sigma _- (s_0, v) \big)^{1/2} 
+ \int_{s_0}^s \| F_+ \|_{L^2(\Hcal_{\bar{s}})} + \| F_- \|_{L^2(\Hcal_{\bar{s}})} d \bar{s}.
\ee
\end{proposition}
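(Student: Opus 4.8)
The plan is to mirror the structure already established for the Dirac spinor in Section~\ref{subsec:dirac-energies}, since the Weyl system \eqref{Weyl1} has exactly the same algebraic shape as \eqref{Dirac1} with the Pauli matrices $\sigma^i$ playing the role of $\gamma^0\gamma^i$. First I would establish positivity of $E^\sigma_\pm$ by completing the square, exactly as in Proposition~\ref{eq:positive-energy-1storder}. Introducing $N_i := x^i/t$, one writes the integrand of $E^\sigma_\pm(s,w)$ as $w^* A_\pm w$ with $A_\pm := I_2 \pm N_j \sigma^j$. The key algebraic fact is $(N_j\sigma^j)(N_k\sigma^k) = N_j N_k \sigma^{(j}\sigma^{k)} = N_j N^j I_2$, using $\{\sigma^j,\sigma^k\}=2\delta^{jk}I_2$. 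Hence
\be
\Big(w \pm N_j \sigma^j w\Big)^* \Big(w \pm N_j\sigma^j w\Big)
= (1 + N_j N^j) w^* w \pm 2 N_j w^* \sigma^j w
= 2\, w^* A_\pm w - \big(1 - N_j N^j\big) w^* w,
\ee
and since $N_j N^j = (r/t)^2 \leq 1$ inside $\mathcal{K}$, with $1 - N_j N^j = (s/t)^2$, this gives $E^\sigma_\pm(s,w) = \frac12 \int_{\Hcal_s}|w \pm N_j\sigma^j w|^2\,dx + \frac12\int_{\Hcal_s}(s^2/t^2) w^* w\,dx$, which is the claimed lower bound. In particular $\|(s/t) w\|_{L^2(\Hcal_s)} \leq E^\sigma_\pm(s,w)^{1/2}$.

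Next I would derive the energy identity analogous to \eqref{integrated-energy-equality}. Premultiplying the first equation of \eqref{Weyl1} by $v^*$ and the second by $u^*$, taking conjugates and adding (using $(\sigma^i)^* = \sigma^i$ and that $\del_\pm$ has $\sigma^i$ as its spatial symbol with the sign flip matching $E^\sigma_\pm$), one obtains a divergence identity of the form
\be
\del_0\big(u^* u + v^* v\big) + \del_j\big(u^* \sigma^j u - v^* \sigma^j v\big)
= i\big(u^* F_- - F_-^* u\big) + i\big(v^* F_+ - F_+^* v\big),
\ee
where the mass terms cancel just as they did in \eqref{del-Dirac}. Integrating this over $\mathcal{K}_{[s_0,s]}$ and inserting the explicit normal $n$ and measure $d\sigma$ from \eqref{eq:n-dsigma} produces exactly the combination $E^\sigma_+(s,u) + E^\sigma_-(s,v)$ on the boundary hyperboloids, with the spacetime integral of $(\bar s/t)$ times the right-hand side as the flux term. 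Then, as in the proof of Proposition~\ref{prop-hyperboloidal-energy-inequality-Dirac}, I differentiate in $s$, bound the right-hand side by $\big(\|(\bar s/t) u\|_{L^2} + \|(\bar s/t) v\|_{L^2}\big)\big(\|F_-\|_{L^2} + \|F_+\|_{L^2}\big)$, use the positivity bound to replace the weighted norms by $\big(E^\sigma_+(s,u)+E^\sigma_-(s,v)\big)^{1/2}$, and integrate over $[s_0,s]$.

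The one point requiring care — and the main thing I would check carefully — is the precise sign bookkeeping when coupling the two Weyl equations: one must verify that the boost-weighted cross terms assemble into $+E^\sigma_+$ for $u$ and $+E^\sigma_-$ for $v$ rather than a mismatched pair, and that the mass terms $iMu$, $iMv$ genuinely cancel in the sum rather than only in the real part (as in the discussion following \eqref{del-Dirac}, the imaginary part is the part that survives, so one wants to confirm the identity being integrated is the one with no leftover mass contribution). This is purely a matter of tracking $(\sigma^i)^* = \sigma^i$ and the $\pm$ in $\del_\pm$ through the multiplication, and once the divergence identity above is confirmed the rest is a verbatim repetition of the arguments already given for $E^\Hcal$. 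No new analytic ideas are needed beyond those in Propositions~\ref{eq:positive-energy-1storder} and \ref{prop-hyperboloidal-energy-inequality-Dirac}.
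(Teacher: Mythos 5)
Your proposal is correct and follows essentially the same route as the paper: completing the square with $A_\pm = I_2 \pm (x_j/t)\sigma^j$ and the anticommutator $\{\sigma^j,\sigma^k\}=2\delta^{jk}I_2$ for positivity, then deriving the divergence identity $\del_0(u^*u+v^*v)+\del_j(u^*\sigma^j u - v^*\sigma^j v) = i u^* F_- - iF_-^* u + iv^* F_+ - iF_+^* v$ (in which the mass terms cancel only after the two Weyl equations are added, exactly the point you flag), and integrating over $\mathcal{K}_{[s_0,s]}$ before differentiating in $s$. The sign bookkeeping you identify as the delicate step is precisely what the paper checks, and it works out as you anticipate.
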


\begin{proof} {\bf Step 1.} Using the Dirac representation \eqref{Dirac-representation} and the decomposition \eqref{Weyl-spinors}, the PDE \eqref{Dirac1} becomes
\be 
	\begin{pmatrix} 
		I_2 & 0 \\ 0 & - I_2 
	\end{pmatrix} 
\del_0 \begin{pmatrix} 
		u+v \\ u-v 
		\end{pmatrix}
+ \begin{pmatrix} 
	0 & \sigma^j \\ -\sigma^j & 0 
	\end{pmatrix} 
\del_j \begin{pmatrix} 
		u+v \\ u-v 
		\end{pmatrix} 
+ iM \begin{pmatrix} 
	u+v \\ u-v 
	\end{pmatrix}
 = i \begin{pmatrix} 
 	F_+ + F_- \\ F_+ - F_- 
 	\end{pmatrix}.
\ee
Defining $\del_\pm := \del_0 \pm \sigma^i \del_i$ this becomes
\bel{A-Weyl-pde-matrix-form} 
\begin{pmatrix} 
	\del_+ u + \del_- v \\ \del_- v - \del_+ u 
\end{pmatrix}
+ iM \begin{pmatrix} 
	u+v \\ u-v 
	\end{pmatrix}
 = i \begin{pmatrix} 
 	F_+ + F_- \\ F_+ - F_- 
 	\end{pmatrix}.
\ee
Adding and subtracting the two rows above gives the following
$$ 
\aligned
\del_- v + iMu 
&= iF_+,
\\
\del_+ u + iMv 
&= i F_-.
\endaligned
$$
Following a similar approach to deriving \eqref{del-Dirac}, we multiply the first and second equation by $v^*$ and $u^*$ respectively. 
$$ 
\aligned
u^* \del_0 u + u^* \sigma^j \del_j u + iM u^* v 
&= i u^* F_-, 
\\
v^* \del_0 v - v^* \sigma^j \del_j v + iM u^* v 
&= -i v^* F_+.
\endaligned
$$
One then adds these equations to their conjugate to obtain the following:
$$
\aligned
\del_0(u^*u) + \del_j(u^*\sigma^j u) + iM(u^*v-v^*u) 
&= i u^* F_- - i F_-^* u,
\\
\del_0(v^*v) - \del_j(v^*\sigma^j v) + iM(v^*u - u^*v) 
&= iv^* F_+ - i F_+^* v.
\endaligned
$$
Note the mass terms appear above. However if add these equations together we obtain
\bel{A-del-Weyl}
\del_0(u^*u + v^*v) + \del_j \big(u^*\sigma^j u - v^*\sigma^j v \big) 
= i u^* F_- - i F_-^* u + iv^* F_+ - i F_+^* v,
\ee
which does not contain a term involving $M$. This equation is the analogous Weyl spinor version of \eqref{del-Dirac}. Clearly integrating \eqref{A-del-Weyl} over $\mathcal{K}_{[s_0, s]}$ gives the energy functional $E^\sigma_\pm(s, u) $ defined in \eqref{eq:energy-Weyl}.

{\bf Step 2.} Next we establish that for a $\CC^2$--valued function $w$ the following holds:
$$
E^\sigma_\pm(s, w) 
\geq \frac{1}{2} \int_{\Hcal_s} \frac{s^2}{t^2} w^* w \,dx \geq 0.
$$
The idea is in the spirit of Proposition \ref{eq:positive-energy-1storder}. 
Observe that the sigma matrices are Hermitian and satisfy the following anti-commutator relation: $\{\sigma^i, \sigma^j\} = 2 \delta^{ij} I_2$. Then we have
$$
\aligned
\int_{\Hcal_s} \Big( w \pm \frac{x_j}{t} \sigma^j w\Big)^*\Big(w \pm \frac{x_j}{t} \sigma^j w\Big) \,dx 
& = \int_{\Hcal_s} \Big( w^*w \pm 2 \frac{x_j}{t} w^* \sigma^j w + \frac{x_j x_k}{t^2} w^* \sigma^{(j} \sigma^{k)} w \Big) \,dx 
\\
&= \int_{\Hcal_s} \Big( w^*w (1+(r/t)^2) \pm 2 \frac{x_j}{t} w^* \sigma^j w \Big) \,dx 
\\
& = 2 E^\sigma_\pm(s, w) - \int_{\Hcal_s} \frac{s^2}{t^2} w^* w \,dx.
\endaligned
$$
Thus we have 
$$
 E^\sigma_\pm(s, w) 
= \frac{1}{2} \int_{\Hcal_s} \Big( w \pm \frac{x_j}{t} \sigma^j w\Big)^*\Big(w \pm \frac{x_j}{t} \sigma^j w\Big) \,dx 
+ \frac{1}{2} \int_{\Hcal_s} \frac{s^2}{t^2} w^* w \,dx 
\geq 0.
$$

\

\noindent{\bf Step 3.} Next, let us show that the following hyperboloidal energy inequality holds for the Weyl spinor equation \eqref{Weyl1}
$$ 
\big(E^\sigma _+ (s, u) + E^\sigma _- (s, v)\big)^{1/2} 
\leq \big(E^\sigma _+ (s_0, u) + E^\sigma _- (s_0, v) \big)^{1/2} 
+ \int_{s_0}^s \| F_+ \|_{L^2(\Hcal_{\bar{s}})} + \| F_- \|_{L^2(\Hcal_{\bar{s}})} d \bar{s}.
$$
Namely, integrating \eqref{A-del-Weyl} over $\mathcal{K}_{[s_0, s]}$ we obtain
$$
\aligned
&\quad 
E^\sigma _+ (s, u) + E^\sigma _- (s, v) 
\\
& = E^\sigma _+ (s_0, u) + E^\sigma _- (s_0, v)
    + \int_{s_0}^s d \bar{s} \int_{\Hcal_{\bar{s}}} (\bar{s}/t) (i u^* F_- - i F_-^* u + iv^* F_+ - i F_+^* v) \,dx.
\endaligned
$$
Differentiating in $s$ and noting that $\| (s/t) u, (s/t)v \|_{L^2(\Hcal_s)} \leq \|u, v\|_{L^2(\Hcal_s)} \leq ( E^\sigma _+ (s, u) + E^\sigma _- (s, v) )^{1/2} $ gives
$$
\frac{d}{ds}\Big( E^\sigma _+ (s, u) + E^\sigma _- (s, v)\Big)^{1/2} 
\leq \| F_+ \|_{L^2(\Hcal_\tau)} + \| F_- \|_{L^2(\Hcal_\tau)}. 
$$
\end{proof}


\subsection{Sobolev-type estimates for a Dirac spinor} 
\label{sec:Sobolev-est-Dirac}
In this section we will obtain novel decay estimates for the Dirac spinor. First we recall the following standard Sobolev estimate on hyperboloids.

\begin{proposition}[Sobolev-type inequality on hyperboloids]
\label{prop:Sobolev-type-inequality3}
Suppose $\psi= \psi(t, x)$ is a sufficiently smooth spinor field supported in the region $\{(t, x): |x|< t - 1\}$, then it holds for $s\geq 2$
\bel{eq:Sobolev3}
\sup_{\Hcal_s} \big| t^{3/2} \psi(t, x) \big| 
\lesssim \sum_{| J |\leq 2} \big\| L^J \psi \big\|_{L^2_f(\Hcal_s)},
\ee
where the summation is over  Lorentz boosts $L$.
\end{proposition}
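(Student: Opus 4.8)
The plan is to reduce the hyperboloidal Sobolev estimate \eqref{eq:Sobolev3} to the standard Sobolev inequality on $\RR^3$ via a suitable localization and rescaling. The starting observation is that on a fixed hyperboloid $\Hcal_s$ we may parametrize points by their spatial coordinate $x$, with $t = \sqrt{s^2 + r^2}$, and that the support assumption $|x| < t-1$ together with $t^2 - r^2 = s^2$ forces $r \lesssim t \lesssim s^2$ on $\Hcal_s$, so the region under consideration is a bounded piece of $\RR^3$ for each fixed $s$ (but with $s$-dependent diameter). The key point is that the Lorentz boosts $L_a = x^a\del_t + t\del_a$, when restricted to act on functions on $\Hcal_s$, become (up to the weight and up to lower-order terms) essentially the flat derivatives rescaled by $t$; more precisely $\underline{\del}_a = L_a/t$ are the natural frame vectors tangent to $\Hcal_s$, so two applications of $L^J$ control two tangential derivatives with the correct power of $t$.

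The concrete steps I would carry out are as follows. First, fix a point $(t_0, x_0) \in \Hcal_s$ and introduce the rescaled variable $y$ by writing $x = x_0 + t_0 \, y$ (a dilation by the factor $t_0 \sim t(x_0)$), and set $\widetilde\psi(y) := \psi\big(\sqrt{s^2 + |x_0 + t_0 y|^2}, \, x_0 + t_0 y\big)$, considered for $|y|$ in a fixed small ball, say $|y| < 1/3$. Second, check that on this ball $t(x) \simeq t_0$ uniformly (this uses $r \lesssim t$ and a short computation), so that the rescaling is comparable to a genuine dilation by $t_0$. Third, apply the standard Sobolev embedding $H^2(B_{1/3}) \hookrightarrow L^\infty(B_{1/3})$ in $\RR^3$ to $\widetilde\psi$, which gives $|\widetilde\psi(0)| = |\psi(t_0, x_0)| \lesssim \sum_{|\beta|\le 2} \|\del_y^\beta \widetilde\psi\|_{L^2(B_{1/3})}$. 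Fourth, translate the $y$-derivatives back: by the chain rule, $\del_{y^a}$ acting on $\widetilde\psi$ produces $t_0 \del_a \psi + (\text{terms from } \del_t \psi)$ evaluated along the hyperboloid, and one recognizes $t_0 \underline\del_a \psi = L_a \psi$ up to the differing base point; iterating, $\del_y^\beta \widetilde\psi$ is a combination of $(L^J\psi)$ restricted to $\Hcal_s$ with $|J| \le |\beta| \le 2$, with bounded coefficients. Fifth, undo the change of variables in the $L^2$ integral: $dx = t_0^3 \, dy$ on the ball, and since on $\Hcal_s$ one also has the relation between $t$, $s$ and $r$, the factor $t_0^3$ converts the $L^2(B_{1/3}, dy)$ norm into a piece of $\|L^J\psi\|_{L^2_f(\Hcal_s)}$ after accounting for $t_0^{3/2}$ on the left-hand side — this is exactly why the weight $t^{3/2}$ appears in \eqref{eq:Sobolev3}. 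Sixth, take the supremum over $x_0$, noting all implicit constants are uniform in $s \ge 2$ and in the base point.

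The main obstacle, and the step deserving the most care, is the fourth one: verifying that the flat derivatives $\del_y^\beta$ of the rescaled function are genuinely controlled by the boost derivatives $L^J\psi$ with the correct powers of $t$ and with $s$-independent constants. The subtlety is that $\del_{y^a}\widetilde\psi$ involves both $\del_a\psi$ and $\del_t\psi$ (through the $t = \sqrt{s^2+r^2}$ dependence), and one must check that the combination that appears is precisely $t_0$ times the tangential derivative $\underline\del_a\psi$ plus harmless lower-order contributions — not an uncontrolled $\del_t\psi$ term. This works because differentiating $\psi(\sqrt{s^2+|x|^2}, x)$ in $x^a$ gives exactly $\big(\del_a + \frac{x^a}{t}\del_t\big)\psi = \underline\del_a\psi$, so the base-point-dependent dilation reproduces the semi-hyperboloidal frame automatically; one then only needs that the coefficients relating $\del_y^\beta$ at base point $x_0$ to $\underline\del^\beta$ at nearby points $x$ are bounded, which follows from $t(x)\simeq t_0$ on the small ball. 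A secondary technical point is handling the second derivatives, where commutators of $\underline\del_a$ with the transition coefficients $\Phi, \Psi$ generate extra terms, but these are all lower order and absorbed into the sum $\sum_{|J|\le 2}$. Since this is the standard hyperboloidal Sobolev lemma from \cite{PLF-YM-book}, I would cite that reference for the detailed frame computation and only indicate the rescaling argument above.
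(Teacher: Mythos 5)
Your proposal is correct and follows essentially the same route as the paper's own proof of Proposition \ref{prop:Sobolev-type-inequality2}: restrict to the hyperboloid via $w_s(x)=\psi(\sqrt{s^2+|x|^2},x)$, rescale about the base point by $x=x_0+t_0y$, apply the standard Sobolev embedding on $B(0,1/3)$, identify $\del_{y^a}$ with $t_0\underline{\del}_a=(t_0/t)L_a$, and recover the weight $t_0^{3/2}$ from $dx=t_0^3\,dy$. You also correctly isolate the one delicate point, namely the uniform bound $t_0\lesssim t$ on the rescaled ball, which the paper handles by the case split $|x_0|\le t_0/2$ versus $|x_0|\ge t_0/2$.
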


This estimate will be stated again in Section \ref{sec:bootstrap} and proved for general functions, see Proposition \ref{prop:Sobolev-type-inequality2}. 
In this section, we will adapt this Sobolev-type inequality \eqref{eq:Sobolev3} to include boosts which commute with the Dirac operator $i \gamma^\nu \del_\nu$.
Following Bachelot \cite{Bachelot} we introduce modified boosts $\Lh_a$  that differ from $L_a$ by a constant matrix
\bel{eq:modified-lorentz-boosts}
\Lh_a 
:= L_a + \frac{1}{2} \gamma_0 \gamma_a. 
\ee
It then holds that $[\Lh_a, i \gamma^\nu \del_\nu] = 0$. The following result is a simple extension of Proposition \ref{prop:Sobolev-type-inequality3}.

\begin{corollary}
\label{cor:Sobolev-type-inequality3}
Suppose $\psi= \psi(t, x)$ is a sufficiently smooth spinor field supported in the region $\{(t, x): |x|< t - 1\}$, then it holds for $s\geq 2$
\bel{eq:Sobolev2-mm}
\sup_{\Hcal_s} \big| t^{3/2} \psi(t, x) \big| 
\lesssim \sum_{| J |\leq 2} \big\| \Lh^J \psi \big\|_{L^2_f(\Hcal_s)}, 
\ee
where $\Lh$ denotes a modified Lorentz boost.
\end{corollary}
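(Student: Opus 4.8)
The plan is to deduce Corollary \ref{cor:Sobolev-type-inequality3} directly from Proposition \ref{prop:Sobolev-type-inequality3} by exploiting the fact that the modified boost differs from the ordinary one only by multiplication by a \emph{constant} $4\times 4$ matrix, namely $\Lh_a - L_a = \tfrac12\gamma_0\gamma_a$. First I would record the elementary observation that, since $L_a = x^a\del_t + t\del_a$ is a scalar first-order differential operator and $\gamma_0\gamma_b$ has constant entries, $L_a$ commutes with the matrix multiplication $\phi\mapsto \gamma_0\gamma_b\phi$ on $\CC^4$-valued maps; no derivative can ever fall on a $\gamma$-matrix. Consequently, iterating the two families of operators produces, besides the leading term, only lower-order "cross terms" carrying fewer boosts and fixed matrix coefficients. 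Precisely, for a multi-index $J$ with $|J|\le 2$ one has expansions
\[
\Lh^J\psi \;=\; \sum_{|K|\le|J|} C^J_K\, L^K\psi,
\qquad
L^J\psi \;=\; \sum_{|K|\le|J|} \widetilde C^J_K\, \Lh^K\psi,
\]
where $C^J_K,\widetilde C^J_K$ are constant $4\times4$ matrices built from products of the $\gamma_0\gamma_a$ and $\Lh^0\psi=L^0\psi=\psi$. I would display the $|J|=1$ case ($L_a\psi = \Lh_a\psi - \tfrac12\gamma_0\gamma_a\psi$) and the $|J|=2$ case explicitly, noting that the ordering of boosts is immaterial for the statement since both sides of \eqref{eq:Sobolev2-mm} and \eqref{eq:Sobolev3} sum over all products of length $\le 2$, and that the second identity follows from the first because the change of basis is triangular in the number of boosts, hence invertible (equivalently, just rerun the computation starting from $L_a=\Lh_a-\tfrac12\gamma_0\gamma_a$).

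With this algebra in place, I would apply Proposition \ref{prop:Sobolev-type-inequality3} componentwise to $\psi$ — which, being supported in $\{|x|<t-1\}$ and sufficiently smooth, satisfies the hypotheses — to get for $s\ge 2$
\[
\sup_{\Hcal_s}\big|t^{3/2}\psi(t,x)\big|
\;\lesssim\; \sum_{|K|\le 2}\big\|L^K\psi\big\|_{L^2_f(\Hcal_s)}.
\]
Then I would insert $L^K\psi = \sum_{|J|\le|K|}\widetilde C^K_J\,\Lh^J\psi$, apply the triangle inequality, and bound each term by $\|\widetilde C^K_J\|\,\|\Lh^J\psi\|_{L^2_f(\Hcal_s)}$ with $\|\widetilde C^K_J\|$ a universal constant. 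As there are only finitely many terms with $|K|,|J|\le 2$, this gives $\sup_{\Hcal_s}|t^{3/2}\psi|\lesssim\sum_{|J|\le 2}\|\Lh^J\psi\|_{L^2_f(\Hcal_s)}$, which is \eqref{eq:Sobolev2-mm}.

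There is no serious obstacle here: the argument is essentially bookkeeping of iterated operators. The only point deserving a bit of care is verifying the combinatorial expansion above and checking that the map $\{L^J\}_{|J|\le 2}\leftrightarrow\{\Lh^J\}_{|J|\le 2}$ is genuinely invertible with constant-matrix coefficients — both of which reduce to the single fact that $L_a$ is a derivation commuting past constant matrices. I would also remark that the commutation identity $[\Lh_a,\, i\gamma^\nu\del_\nu]=0$ stated before the Corollary is not needed for the Corollary itself (it is used elsewhere, to propagate such an inequality along the Dirac flow); only the relation $\Lh_a-L_a=\tfrac12\gamma_0\gamma_a$ enters here.
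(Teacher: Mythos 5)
Your proposal is correct and is exactly the argument the paper has in mind: the paper omits the proof entirely, calling the corollary ``a simple extension'' of Proposition \ref{prop:Sobolev-type-inequality3}, and your bookkeeping — writing $L^K\psi$ as a finite sum of constant matrices applied to $\Lh^J\psi$ with $|J|\le|K|$ (using that $L_a$ commutes past constant $\gamma$-matrices) and then invoking the unmodified Sobolev inequality componentwise — is precisely the missing justification. Your closing remark that $[\Lh_a, i\gamma^\nu\del_\nu]=0$ plays no role in the corollary itself is also accurate.
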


We can combine this Sobolev estimate \eqref{eq:Sobolev2-mm} with an appropriate energy bound in a standard way to obtain the following `weak'  decay estimate for the Dirac field. 

\begin{proposition}
Assume $E^\Hcal (s, \Lh^J \psi)^{1/2} \leq \eps_1 s^{\delta_1}$ for $|J| \leq 2$ with $\eps_1 > 0, \delta_1 \geq 0$ some parameters, then 
\bel{eq:decay-part1}
\big| s t^{-1/2} \psi \big|
\lesssim \eps_1 s^{\delta_1}.
\ee
\end{proposition}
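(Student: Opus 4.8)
The plan is to combine the Sobolev-type inequality with modified boosts, Corollary~\ref{cor:Sobolev-type-inequality3}, with the positivity of the hyperboloidal Dirac energy, Proposition~\ref{eq:positive-energy-1storder}. The only subtlety is that the hypothesis controls the \emph{weighted} norm $\|(s/t)\Lh^J\psi\|_{L^2_f(\Hcal_s)}$ rather than $\|\Lh^J\psi\|_{L^2_f(\Hcal_s)}$, so I would apply the Sobolev inequality not to $\psi$ directly but to the rescaled spinor field $(s/t)\psi$, and then commute the modified boosts past the scalar weight $s/t$.

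In more detail, first, by the lower bound \eqref{eq:lowbound1}, the assumption $E^\Hcal(s,\Lh^J\psi)^{1/2}\le\eps_1 s^{\delta_1}$ gives $\|(s/t)\Lh^J\psi\|_{L^2_f(\Hcal_s)}\le\sqrt 2\,\eps_1 s^{\delta_1}$ for every $|J|\le 2$. Second, viewing $s=\sqrt{t^2-r^2}$ as a smooth positive function on $\mathcal{K}$, the field $\Psi:=(s/t)\psi$ is a smooth spinor field supported in $\{|x|<t-1\}$, so Corollary~\ref{cor:Sobolev-type-inequality3} applies and yields, for $s\ge 2$,
$$\sup_{\Hcal_s}\big|t^{3/2}\Psi\big|\lesssim\sum_{|J|\le 2}\big\|\Lh^J\Psi\big\|_{L^2_f(\Hcal_s)}.$$
Third, I would bound the commutators $[\Lh^J,(s/t)]$: since $L_a(t^2-r^2)=0$ and $L_a(1/t)=-x^a/t^2$ one has $L_a(s/t)=-(s/t)(x^a/t)$, and because $|x^a/t|\le 1$ on $\mathcal{K}$, iterated boost-derivatives of $s/t$ stay of the form $(s/t)$ times a function bounded on $\mathcal{K}$; moreover the constant matrix $\tfrac12\gamma_0\gamma_a$ in $\Lh_a=L_a+\tfrac12\gamma_0\gamma_a$ commutes with multiplication by the scalar $s/t$. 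The Leibniz rule then gives $|\Lh^J((s/t)\psi)|\lesssim(s/t)\sum_{|J'|\le|J|}|\Lh^{J'}\psi|$ for $|J|\le 2$, so that the right-hand side of the Sobolev inequality is $\lesssim\sum_{|J'|\le 2}\|(s/t)\Lh^{J'}\psi\|_{L^2_f(\Hcal_s)}\lesssim\eps_1 s^{\delta_1}$ by the first step.

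Finally, $t^{3/2}\Psi=s\,t^{1/2}\psi$, so the previous steps give $\sup_{\Hcal_s}|s\,t^{1/2}\psi|\lesssim\eps_1 s^{\delta_1}$; since $t\ge 1$ on $\mathcal{K}$ we have $|s\,t^{-1/2}\psi|\le|s\,t^{1/2}\psi|$, which is \eqref{eq:decay-part1} (in fact a slightly stronger bound). The main obstacle is the commutator step: one has to check that passing the modified boosts through the weight $s/t$ produces only terms dominated by $(s/t)\Lh^{J'}\psi$ with $|J'|\le|J|$, with no loss of decay — routine, and resting on the fact that $s/t$ together with all its boost-derivatives is homogeneous of degree zero and bounded on the light cone $\mathcal{K}$.
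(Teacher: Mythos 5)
Your argument is correct and follows essentially the same route as the paper: the paper's proof simply asserts $\|\Lh^J((s/t)\psi)\|_{L^2_f(\Hcal_s)}\lesssim\eps_1 s^{\delta_1}$ for $|J|\le 2$ and then invokes the hyperboloidal Sobolev inequality, and your commutator computation $L_a(s/t)=-(s/t)(x^a/t)$ is exactly the detail behind that assertion. Your observation that the argument in fact yields the stronger bound $|s\,t^{1/2}\psi|\lesssim\eps_1 s^{\delta_1}$ is also consistent with how the paper uses this estimate later.
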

\begin{proof}
It is not hard to show
$$
\Big\| \Lh^J \big((s/ t) \psi\big) \Big\|_{L^2_f} \, dx
\lesssim \eps_1 s^{\delta_1},
\qquad
|J| \leq 2.
$$
Then together with the Sobolev inequality on hyperboloids \eqref{eq:Sobolev3}, it gives the desired pointwise decay estimate \eqref{eq:decay-part1}.
\end{proof}

Since $t < s^2$ within the cone
$\mathcal{K}$, the decay rate \eqref{eq:decay-part1} is not as good as that of standard Klein-Gordon fields, which leads us to seek help from the other component of the functional $E^\Hcal (s, \psi)$.
In Proposition \ref{eq:positive-energy-1storder}, we have shown the decomposition identity \eqref{eq:energy-identity} for the hyperboloidal energy functional $E^\Hcal (s, \psi)$ of the Dirac equation, i.e.
$$
\aligned
E^\Hcal (s, \psi) 
&= \frac{1}{2} E^+(s, \psi) + {1 \over 2} \int_{\Hcal_s} {s^2 \over t^2} \psi^* \psi \, dx,
\\
E^+(s, \psi) 
&:= 
\int_{\Hcal_s} 
\Big(\psi + \frac{x_i}{t} \gamma^0 \gamma^i \psi \Big)^* 
\Big(\psi + \frac{x_j}{t} \gamma^0 \gamma^j \psi \Big) \, dx
\\
&=
\int_{\Hcal_s} 
\Big(\gamma^0 \psi + {x_i \over t} \gamma^i \psi \Big)^* 
\Big(\gamma^0 \psi + {x_j \over t} \gamma^j \psi \Big) \, dx.
\endaligned
$$
We notice that there is no information about the mass $M$ in the Dirac energy $E^\Hcal$. Nonetheless by closely studying each component of the functional $E^\Hcal (s, \psi)$, we obtain the following estimate. 

\begin{proposition}\label{prop:energy-decop}
Consider a solution to the Dirac equation 
\be
-i \gamma^\mu \del_\mu \psi + M \psi 
= F,
\ee 
where $|F| \lesssim \eps_1 t^{-3/2}$, and assume $E^\Hcal (s, \del^I \Lh^J \psi)^{1/2} \leq \eps_1 s^{\delta_1}$ for $|I| + |J| \leq 3$ with $\eps_1 > 0, \delta_1 \geq 0$ some parameters, then 
\be 
\big| M t^{3/2} \psi \big|
\lesssim \eps_1 s^{ \delta_1}.
\ee
\end{proposition}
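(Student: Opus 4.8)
The plan is to exploit the fact that the combination $\gamma^0\psi + (x_i/t)\gamma^i\psi$ is controlled by the energy component $E^+(s,\psi)^{1/2}$, which by hypothesis is $\lesssim \eps_1 s^{\delta_1}$. The key observation is that the mass term $M\psi$ can be extracted from the Dirac equation itself once we contract with the right frame-adapted vector. Specifically, starting from $-i\gamma^\mu\del_\mu\psi + M\psi = F$, I would rewrite $\gamma^\mu\del_\mu = \gamma^0\del_t + \gamma^i\del_i$ and then re-express the spatial derivatives via the semi-hyperboloidal frame $\del_i = \underline{\del}_i - (x^i/t)\del_t$. This gives
\be
-i\Big(\gamma^0 + \tfrac{x_i}{t}\gamma^i\Big)\del_t\psi \;-\; i\gamma^i\underline{\del}_i\psi \;+\; M\psi \;=\; F,
\ee
where I used $\gamma^0(\gamma^0)^{-1}=I$ appropriately; more precisely, multiplying through by $\gamma^0$ and rearranging, the point is that the coefficient of $\del_t\psi$ becomes exactly (a multiple of) the good combination $\gamma^0\psi + (x_i/t)\gamma^i\psi$ whose $L^2$ norm on $\Hcal_s$ is bounded by $E^+(s,\psi)^{1/2} \lesssim \eps_1 s^{\delta_1}$.

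Next I would apply the Sobolev-type inequality on hyperboloids (Proposition \ref{prop:Sobolev-type-inequality3} / Corollary \ref{cor:Sobolev-type-inequality3}) to the quantity $M\psi$. From the rearranged equation, $M\psi = F + i\gamma^i\underline{\del}_i\psi + i(\gamma^0 + (x_i/t)\gamma^i)\del_t\psi$ (schematically). Each term on the right needs its $L^2_f(\Hcal_s)$ norm, together with those of its images under up to two (modified) boosts $\Lh^J$, to be controlled: the term $F$ is handled by the pointwise hypothesis $|F|\lesssim\eps_1 t^{-3/2}$ together with boundedness of $|\Lh^J F|$ (which requires $|I|+|J|\le 3$ on $\psi$, matching the stated hypothesis); the term $\underline{\del}_i\psi$ has $L^2_f(\Hcal_s)$ norm bounded by the energy $\Ecal$-type quantity, hence by $\eps_1 s^{\delta_1}$; and the crucial term $(\gamma^0+(x_i/t)\gamma^i)\del_t\psi$ is controlled because this is precisely the kind of combination appearing in $E^+$. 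Applying \eqref{eq:Sobolev2-mm} to $M\psi$ then yields $\sup_{\Hcal_s}|t^{3/2}M\psi| \lesssim \sum_{|J|\le 2}\|\Lh^J(M\psi)\|_{L^2_f(\Hcal_s)} \lesssim \eps_1 s^{\delta_1}$, which is the claim.

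The main obstacle I anticipate is the bookkeeping around the good combination: one must verify that after contracting the Dirac equation with $\gamma^0$ (or its analogue), the coefficient multiplying $\del_t\psi$ is genuinely the combination $\gamma^0\psi+\tfrac{x_i}{t}\gamma^i\psi$ — or can be re-expressed in terms of it — rather than an uncontrolled multiple of $\del_t\psi$ alone, for which only the weak estimate \eqref{eq:decay-part1} is available. The subtlety is that $E^+$ controls $\gamma^0\psi+(x_i/t)\gamma^i\psi$ \emph{applied to $\psi$}, not to $\del_t\psi$, so one needs to either commute the relevant operator with $\del_t$ (picking up lower-order, boost-expressible terms that are absorbed into the $\Lh^J$ sum), or argue directly at the level of the Dirac equation for $\del_t\psi$. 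A secondary technical point is checking that the modified boosts $\Lh_a$ commute cleanly with the operator $(\gamma^0+(x_i/t)\gamma^i)$ up to harmless terms, so that applying $\Lh^J$ to the rearranged equation keeps all right-hand side terms under control at the cost of one extra order of regularity — which is why the hypothesis is stated for $|I|+|J|\le 3$ rather than $\le 2$.
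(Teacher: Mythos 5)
Your proposal follows essentially the same route as the paper's proof: rewrite the Dirac equation in the semi-hyperboloidal frame to isolate $M\psi = F + i(\gamma^0 - \gamma^a (x_a/t))\del_t\psi + i\gamma^a\underline{\del}_a\psi$, bound the first and third terms directly, control the middle term through the $E^+$ component of the hyperboloidal energy, and conclude via the Sobolev inequality on hyperboloids after checking that the commutators $[\Lh^J, \gamma^0 - \gamma^a(x_a/t)]$ produce only harmless terms. The obstacles you flag (the ordering of $\Lh^J$, $\del_t$ and the frame-adapted matrix) are exactly the points the paper resolves by the explicit commutator computation, so the argument is correct as outlined.
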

\begin{proof}
$Step~1.$
We first express the Dirac equation \eqref{Dirac1} in the semi-hyperboloidal frame to get
$$
-i \Big( \gamma^0 - \gamma^a {x_a \over t} \Big) \del_t \psi
- i \gamma^a \underline{\del}_a \psi
+ M \psi
= F,
$$
which gives
$$
M \psi
=
F + i \Big( \gamma^0 - \gamma^a {x_a \over t} \Big) \del_t \psi
+ i \gamma^a \underline{\del}_a \psi.
$$

It is easy to see that the first and the third terms are bounded by $\eps_1 t^{-3/2 + \delta_1}$, hence it is enough to show 
$$
\Big| \Big( \gamma^0 - \gamma^a {x_a \over t} \Big) \del_t \psi \Big|
\lesssim \eps_1 t^{-3/2 + \delta_1}.
$$
Then recalling the Sobolev inequality on hyperboloids \eqref{eq:Sobolev2}, it suffices to show
\be
\Big\| \Lh^J \Big( \big( \gamma^0 - \gamma^a (x_a / t) \big) \del_t \psi \Big) \Big\|
\lesssim \eps_1 s^{\delta_1},
\qquad
|J| \leq 2.
\ee

$Step~2.$
We note that Proposition \ref{eq:positive-energy-1storder} implies 
$$
\Big\| \big( \gamma^0 - \gamma^a (x_a / t) \big) \Lh^J \del_t \psi \Big\|
\lesssim \eps_1 s^{\delta_1},
\qquad
|J| \leq 2,
$$
which suggests us to study the commutator
$$
\big[ \Lh_a, \gamma^0 - \gamma^a (x_a / t) \big].
$$

A computation gives
$$
\big[ \Lh_a, \gamma^0 - \gamma^a (x_a / t) \big]
=
- {1\over t} \gamma^a - {x_a \over t} \big( \gamma^0 - \gamma^a (x_a / t) \big),
$$
which only contains good terms. 
An induction shows $\big[ \Lh^J, \gamma^0 - \gamma^a (x_a / t) \big]$ with $|J| \leq 2$ only contains good terms.
Hence the proof is complete.
\end{proof}

 
\section{Nonlinear stability of the ground state for the Dirac-Proca model} 
\label{section:DiracProca}

\subsection{The Dirac-Proca model as a PDE system} 
\label{subsec:dp-pde}

Using the tools of Section \ref{sec:hyper-energies}, we will now discuss the Dirac-Proca model, the gauge condition and choice of initial data. This leads us to the second main stability Theorem \ref{theo-oursecondone} below, which can be proved using the methods of Sections \ref{sec:nonlinearities} and \ref{sec:bootstrap}.  Without fixing a gauge, the field equations for the Dirac-Proca model with unknowns $A^\mu$ and $\psi$ read 
\bel{eq:101ng} 
\aligned
\Box A^\nu - m^2 A^\nu  + \del^\nu (\del_\mu A^\mu)
& = 
	-\psi^* \gamma^0 \gamma^\nu (P_L \psi),
\\
-i \gamma^\mu \del_\mu \psi  
&= 
	- \gamma^\mu A_\mu (P_L \psi). 
\endaligned 
\ee
Recall $P_L = {1\over 2} (I_4 - \gamma^5)$ was defined in \eqref{eq:proj-ops}. 
Here  $\psi: \RR^{3+1} \to \CC^4$ represents a \textit{massless} Dirac fermion with spin $1/2$ and $A^\mu: \RR^{3+1} \to \RR$ represents a massive boson (the Proca field) of mass $m^2$ with spin $1$. 
As discussed in \cite{Tsutsumi}, \eqref{eq:101ng} is equivalent to the following system
\bel{eq:102g} 
\aligned
\Box A^\nu - m^2 A^\nu  
& = 
	-\psi^* \gamma^0 \gamma^\nu (P_L \psi),
\\
-i \gamma^\mu \del_\mu \psi  
&= 
	- \gamma^\mu A_\mu (P_L \psi),
\\
\del_\mu A^\mu
& = 0.
\endaligned 
\ee

It can easily be shown that $\del_\mu A^\mu$ satisfies a homogeneous Klein-Gordon equation. Thus we may treat $\del_\mu A^\mu = 0$ as a constraint provided we specify the initial data set $\big(A_0^\nu, A_1^\nu, \psi_0 \big)$ at some time $t_0>0$: 
\bel{eq:101-ID2} 
\aligned 
A^\nu (t_0,\cdot) 
= a^\nu,
\qquad
\del_t A^\nu (t_0, \cdot) = b^\nu,
\qquad
\psi(t_0,\cdot) 
= \psi_0.
\endaligned
\ee
to satisfy the following two `Lorenz compatibility' conditions
\bel{eq:locc}
\aligned
b^0 + \del_j a^j 
	& = 0,
\\ 
\Delta a^0 - m^2 a^0 
& = 
	-\psi_0^* P_L \psi_0 - \del_j b^j.
\endaligned
\ee
For more generality, see also \cite{Bachelot}, we consider \eqref{eq:101ng} with an artificial mass $M$ added to the spinor:
\bel{eq:intro-dirac-proca2} 
\aligned
\Box A^\nu - m^2 A^\nu  
& = 
	-\psi^* \gamma^0 \gamma^\nu (P_L \psi),
\\
-i \gamma^\mu \del_\mu \psi   + M \psi
&= 
	- \gamma^\mu A_\mu (P_L \psi).
\endaligned 
\ee
The mass parameters $M \geq 0$ and $m > 0$ are constants, and we will study both cases $M = 0$ and $M > 0$.
Again the initial data will be taken to satisfy \eqref{eq:locc}. This elliptic-type system of two equations for nine scalar functions admits non-trivial compactly supported solutions. For example one may choose: $A_0^j \in C_c(\RR^{3+1})$ for $j=1,2,3$, $\psi_0 \in C_c(\RR^{3+1})$ such that each component of $\psi_0$ is the same, and all remaining initial data is trivial. 
 

\subsection{Main result for the Dirac-Proca model} 

We now state our result for the Dirac-Proca model. The proof of the theorem below will clearly follow from the  the proof we will develop for our main result (Theorem~\ref{thm:full-U(1)}), and so we omit it.  

For the model under consideration in this section, the {\sl ground state} of the theory is simply given by 
\be
A 
\equiv 0, 
\qquad 
\psi 
\equiv 0. 
\ee

\begin{theorem}
\label{theo-oursecondone}
Consider the Dirac-Proca system \eqref{eq:intro-dirac-proca}  with $M \geq 0$, $m > 0$, and let $N$ be a sufficiently large integer. There exists $\eps_0 > 0$, which is independent of $M$, such that for all $\eps \in (0, \eps_0)$ and all compactly supported, Lorenz compatible initial data in the sense of \eqref{eq:lorenzc}, satisfying the smallness condition
\be 
\| a^\nu, \psi_0\|_{H^N} + \|b^\nu \|_{H^{N-1}} 
\lesssim \eps,
\ee
the initial value problem of \eqref{eq:intro-dirac-proca} admits a unique global-in-time solution $(\psi, A^\nu)$.
Furthermore, the following decay result holds
\be 
|A| 
\lesssim 
\eps t^{-3/2},
\qquad
|\psi | 
\lesssim 
\eps \min \big( t^{-1}, M^{-1} t^{-3/2} \big).
\ee
\end{theorem}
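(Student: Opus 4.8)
\textbf{Proof proposal for Theorem \ref{theo-oursecondone}.}

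The plan is to run a bootstrap argument on the hyperboloidal foliation, exactly in the spirit of the Dirac energy estimates already developed in Section \ref{sec:hyper-energies}. Since the initial data are compactly supported, by finite speed of propagation the solution is supported in $\mathcal K$ and we may reduce the $t=$const Cauchy problem to the hyperboloidal initial value problem posed on an initial hyperboloid $\Hcal_{s_0}$; a standard argument (as in \cite{PLF-YM-book}) converts the data bounds \eqref{eq:locc}--smallness into bounds on the hyperboloidal energies $\Ecal_m(s_0, \del^I L^J A^\nu)$ and $E^\Hcal(s_0, \del^I \Lh^J \psi)$ at $s=s_0$. I would then set up bootstrap assumptions: for $|I|+|J| \le N$,
\begin{equation}
\Ecal_m(s, \del^I L^J A^\nu)^{1/2} \le C_1 \eps,
\qquad
E^\Hcal(s, \del^I \Lh^J \psi)^{1/2} \le C_1 \eps \, s^{\delta},
\end{equation}
for a small $\delta>0$ (or a $\log s$ factor as in Theorem \ref{thm:U1}), with $C_1$ large and $\eps$ small. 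The aim is to recover the same bounds with $C_1$ replaced by $C_1/2$.

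The key steps, in order, are: (i) commute the equations \eqref{eq:intro-dirac-proca2} with $\del^I L^J$ (for $A$) and $\del^I \Lh^J$ (for $\psi$, using Bachelot's modified boosts \eqref{eq:modified-lorentz-boosts} which commute with $i\gamma^\nu\del_\nu$), collecting the commutator terms which are lower order and carry extra decay; (ii) derive $L^\infty$ decay from the bootstrap: for $A$, use the Klein-Gordon Sobolev inequality on hyperboloids (the second-order energy $\Ecal_m$ controls $\|(s/t)\del A\|_{L^2}$ and hence, via Proposition \ref{prop:Sobolev-type-inequality3}-type estimates, $|A| \lesssim \eps t^{-3/2}$); for $\psi$, use Corollary \ref{cor:Sobolev-type-inequality3} to get $|st^{-1/2}\psi| \lesssim \eps s^\delta$ i.e. the weak decay $|\psi|\lesssim \eps t^{-1}$ up to $s^\delta$, and separately — crucially when $M>0$ — use Proposition \ref{prop:energy-decop} to extract $|Mt^{3/2}\psi| \lesssim \eps s^\delta$, so that $|\psi| \lesssim \eps\min(t^{-1}, M^{-1}t^{-3/2})$ uniformly in $M$; (iii) estimate the nonlinearities: the Proca source $-\psi^*\gamma^0\gamma^\nu(P_L\psi)$ obeys $\|\psi^*\gamma^0\gamma^\nu P_L\psi\|_{L^2_f(\Hcal_s)} \lesssim \|(t/s)\psi\|_{L^\infty}\|(s/t)\psi\|_{L^2_f} \lesssim \eps^2 s^{-1+\delta}$ using the weak decay, which is integrable in $s$ and closes the $A$-energy; the Dirac source $-\gamma^\mu A_\mu(P_L\psi)$ is estimated by $\|A\|_{L^\infty}\|(s/t)\psi\|_{L^2_f}$ with $\|A\|_{L^\infty}\lesssim \eps t^{-3/2}\lesssim \eps s^{-1}$ near the light cone interior, giving again an $s$-integrable (or $\log$-producing) bound that closes the $\psi$-energy via Proposition \ref{prop-hyperboloidal-energy-inequality-Dirac}; (iv) feed the improved energy bounds back, choose $\eps_0$ (independent of $M$, since $M$ never appears in $E^\Hcal$ and the upper bound $M\le m$ controls the one place — the data term $M^2\|\psi(s_0)\|^2$ — where it could enter, cf. the second Remark after Theorem \ref{thm:U1}), and conclude global existence by the standard continuation criterion; finally read off the stated decay rates.

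The main obstacle is step (iii) for the Proca field combined with the uniformity in $M$: the $\psi$-$\psi$ quadratic term in the wave/Klein-Gordon equation for $A$ behaves like a wave-wave interaction with no a priori good decay, and one must avoid the loss incurred by treating $\psi$ as a Klein-Gordon field with the $M^{-1}$-dependent decay $|\psi|\lesssim M^{-1}\eps t^{-3/2}$ (which would produce $\eps^2 M^{-2}$-type terms and fail to close). The resolution is precisely to use the first-order Dirac energy $E^\Hcal$ and the mass-independent weak decay $|\psi|\lesssim \eps t^{-1}s^\delta$ for the bilinear $L^2$ estimate, reserving the sharper $M^{-1}t^{-3/2}$ decay only for the final pointwise statement; one also needs to check carefully that the modified-boost commutators in step (i) and the hidden divergence structure $\del_\mu A^\mu = 0$ do not spoil the top-order energy estimates (in particular controlling $\del_\mu A^\mu$ as a homogeneous Klein-Gordon quantity so it stays $O(\eps)$, or small, for all $s$).
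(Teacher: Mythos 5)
Your overall architecture (hyperboloidal bootstrap, first-order Dirac energy $E^\Hcal$, modified boosts $\Lh_a$, Proposition \ref{prop:energy-decop} for the $M^{-1}t^{-3/2}$ part of the decay) matches what the paper does -- indeed the paper omits a separate proof of Theorem \ref{theo-oursecondone} and refers to the machinery of Sections \ref{sec:nonlinearities}--\ref{sec:bootstrap}. However, there is a genuine gap at the step you yourself flag as the main obstacle, and your proposed resolution does not work. You estimate the Proca source by
$\|\psi^*\gamma^0\gamma^\nu P_L\psi\|_{L^2_f(\Hcal_s)}\lesssim \|(t/s)\psi\|_{L^\infty}\|(s/t)\psi\|_{L^2_f}\lesssim \eps^2 s^{-1+\delta}$ and call this integrable in $s$; it is not -- $\int^\infty s^{-1+\delta}\,ds$ diverges for every $\delta\geq 0$. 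The bound itself is sharp for this argument: the weak decay gives $|\psi|\lesssim \eps\, t^{-1/2}s^{-1+\delta}$, hence $(t/s)|\psi|\lesssim \eps\, t^{1/2}s^{-2+\delta}\lesssim \eps\, s^{-1+\delta}$ using only $t\leq s^2$, and this borderline rate is intrinsic to the wave--wave character of the $\psi$--$\psi$ interaction, not an artifact of losing derivatives. Consequently your $A$-energy acquires at least logarithmic growth at \emph{every} order, and the asserted decay $|A|\lesssim \eps\, t^{-3/2}$ (with no loss) does not follow; the degraded decay of $A$ then has to be re-propagated through the Dirac equation, and the clean uniform-in-$M$ statement is lost.

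The paper's mechanism for exactly this term is the Tsutsumi-type substitution of Section \ref{sec:nonlinearities}: one works with $\tildeA^\nu := A^\nu + m^{-2}\psi^*\gamma^0\gamma^\nu (P_L\psi)$ (cf.\ \eqref{eq:def-B-vec}), whose Klein--Gordon equation replaces the quadratic spinor source by the null form $Q_0(\psi,\gamma^0\gamma^\nu\psi)$ -- which by \eqref{eq:est-null1} gains either tangential derivatives $\underline{\del}_a$ or a factor $(s/t)^2$, and is estimated using the bound $\|(s/t)\del\psi\|_{L^2_f}\lesssim C_1\eps$ from the second-order energy -- plus a remainder carrying the good factor $M^2\psi^*\gamma^0\gamma^\nu\psi$, which is precisely where Proposition \ref{prop:energy-decop} ($|Mt^{3/2}\psi|\lesssim\eps$) enters; both contributions are then genuinely integrable in $s$ and yield the uniform bound on $\Ecal_m(s,\del^IL^J\tildeA^\nu)$, after which one passes back to $A^\nu$ by Young's inequality as in Section \ref{subsec:refined909}. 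So Proposition \ref{prop:energy-decop} is not merely a device for reading off the final pointwise decay of $\psi$, as you use it; it is needed inside the energy argument for $A$, and the change of variable $\tildeA^\nu$ is the missing ingredient without which your bootstrap does not close with the stated rates.
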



\section{Nonlinear stability of the ground state for the $U(1)$ model} 
\label{section:U1model}

\subsection{The U(1) model as a PDE system}

We now treat the Higgs mechanism applied to a $U(1)$ abelian gauge field. This gives some exposure to the problems coming from the Higgs field that we will meet when dealing with the full GSW model in future work. From the Lagrangian of this theory; see \eqref{eq:lagrangian-u1} below, and 
in a suitably modified Lorenz gauge; see \eqref{eq:U1-constraint}  below, 
the field equations for this model 
read 
\bel{eq:u101}
\aligned
(\Box - m_q^2) A^\nu 
& = Q_{A^\nu},
\\
\Box \chi - m_q^2 \, {\phi_0 \over 2 v^2} \big(\phi_0^* \chi - \chi^* \phi_0 \big) - m_\lambda^2 \, {\phi_0 \over 2 v^2} \big(\phi_0^* \chi + \chi^* \phi_0 \big)
& = Q_{\chi},
\\
i \gamma^\mu \del_\mu \psi - m_g \psi 
&= Q_{\psi},
\endaligned
\ee
with quadratic or higher order terms given by
\bel{eq:u1-nl}
\aligned
Q_{A^\nu}
&:=
iq \big( \chi^* (\del^\nu \chi) - (\del^\nu \chi^*) \chi \big) + 2 q^2  A^\nu \big(\chi^* \phi_0 + \phi_0^* \chi + \chi^* \chi \big) + q \psi^* \gamma^0 \gamma^\nu \psi,
\\
Q_{\chi}
&:=
2iq A_\mu \del^\mu \chi + q^2 \chi \big(\phi_0^* \chi - \chi^* \phi_0 \big) + q^2 A^\mu A_\mu \big(\phi_0 + \chi \big)  
\\
&\quad
+ 2 \lambda \chi^* \chi \phi_0 + 2 \lambda \chi \big(\phi_0^* \chi + \chi^* \phi_0 + \chi^* \chi \big)
 - g \big(\phi_0 + \chi \big)\psi^* \gamma^0 \psi,
\\
Q_{\psi}
&:= 
g \big(\phi_0^* \chi + \chi^* \phi_0 + \chi^* \chi \big) \psi - q \gamma^\mu A_\mu \psi. 
\endaligned
\ee
Here $\chi := \phi - \phi_0:\RR^{1+3} \to \CC$ is the perturbation from a constant vacuum state $\phi_0$ satisfying $\phi_0^* \phi_0 = v^2$ and $\del_\mu \phi_0 = 0$. 
The field $\psi: \RR^{3+1} \to \CC^4$ represents a fermion of mass $m_g$ with spin $1/2$ and $A_\mu:\RR^{1+3} \to \RR$ represents a massive boson of mass $m_q^2$ with spin $1$. 
Furthermore the mass coefficients 
\be
m_q^2 
= 2 q^2 v^2, 
\quad
m_\lambda^2 
= 4 \lambda v^2, 
\quad
m_g 
= g v^2
\ee
depend themselves on given coupling constants denoted by $q, g, \lambda$, 
as well as the so-called ``vacuum expectation value'' of the Higgs field, denoted by $v$. 

The initial data set are denoted by
\bel{eq:u1-ID-mm}
\big( A^\nu, \chi, \psi \big) (t_0, \cdot) 
= \big( A^\nu_0, \chi_0, \psi_0 \big), 
\qquad
 \big( \del_t A^\nu, \del_t \chi \big) (t_0, \cdot) 
= 
\big( A^\nu_1, \chi_1 \big), 
\ee
which are said to be `Lorenz compatible' if they satisfy
\bel{eq:U1-constraint} 
\aligned
\del_a A_0^a 
&
	= - A^0_1  - i q \big( \phi_0^* \chi_0 - \chi_0^* \phi_0 \big),
\\
\Delta A^0_0 - m_q^2 A^0_0  
&=  - \del_i A_1^i - i q \big( \phi_0^* \chi_1 - \chi_1 ^* \phi_0 \big) + i q \big( \chi_0^* \chi_1 - \chi_1 ^* \chi_0 \big) 
\\
&\quad
+ 2 q^2 A^0_0 \big( \phi_0^* \chi_0 + \chi_0^* \phi_0 + \chi_0^* \chi_0 \big) + q \psi_0^* \psi_0.
\endaligned
\ee
The derivation of \eqref{eq:U1-constraint}  will follow from results of the following Section \ref{subsec:U1-inv}, in particular the gauge choice condition given in Lemma \ref{lem:u1-linearised-gauge}. 
Similar to the constraint equations \eqref{eq:locc} for the Dirac-Proca model, we also obtain in \eqref{eq:U1-constraint}  an elliptic-type system of only two equations for eleven functions. 
Clearly non-trivial solutions with compact support can be constructed. 

Our  main result was stated in the introduction (Theorem~\ref{thm:U1}) and the proof will be provided in Section~\ref{sec:bootstrap}.


\subsection{The abelian action and $U(1)$ invariance} \label{subsec:U1-inv}

The Lagrangian we consider is
\bel{eq:lagrangian-u1} 
\aligned
\Lcal 
= -{1 \over 4} F_{\mu \nu} F^{\mu \nu}
	- (D_\mu \phi)^* D ^\mu \phi - V(\phi^* \phi)
	- i \psi^* \gamma^0 \gamma^\mu D_\mu \psi
	+ g\phi^* \phi \psi^* \gamma^0 \psi,
	\endaligned
\ee
where we use the following definitions for the Higgs potential, gauge curvature and gauge covariant derivatives:
\bel{eq:83GG}
\aligned
V(\phi^* \phi) 
&:=\lambda ( \phi^* \phi - v^2)^2,
\\
F_{\mu \nu} 
&:= \del_\mu A_\nu - \del_\nu A_\mu,
\\
D_\mu \phi 
&:= (\del_\mu - i q A_\mu) \phi,
\\
D_\mu \psi 
&:= (\del_\mu - i q A_\mu) \psi.
\endaligned
\ee
Furthermore $\lambda, v, g, q$ are constants.
A calculation shows that the Euler-Lagrange equations for \eqref{eq:lagrangian-u1} are the following

\be 
\aligned
\del_\mu F^{\mu \nu} 
& = iq \phi^* (D^\nu \phi) - iq (D^\nu \phi)^* \phi + q \psi^* \gamma^0 \gamma^\nu \psi,
\\
D^\mu D_\mu \phi 
& = V' (\phi^* \phi) \phi - g \phi \psi^* \gamma^0 \psi,
\\
i \gamma^\mu D_\mu \psi 
&= g \phi^* \phi \psi, 
\endaligned
\ee
together with \eqref{eq:83GG}.
These PDEs can also be expressed as   
\bel{eq:euler-lagrange-u1-no-perturb} 
\aligned
\Box A^\nu - \del^\nu ( \text{div} A ) - 2 q^2  A^\nu \phi^* \phi
& = iq \big( \phi^* (\del^\nu \phi) - (\del^\nu \phi^*) \phi \big) + q \psi^* \gamma^0 \gamma^\nu \psi,
\\
\Box \phi - V'(\phi^* \phi) \phi - iq \phi \del_\mu A^\mu
& = 2iq A_\mu \del^\mu \phi + q^2 A^\mu A_\mu \phi  - g\phi \psi^* \gamma^0 \psi,
\\
i \gamma^\mu \del_\mu  \psi 
- g \phi^* \phi \psi
& = - q \gamma^\mu A_\mu \psi.
\endaligned
\ee

\begin{lemma} \label{lem-u1-gauge-symmetry}
$\Lcal$ has a $U(1)$-gauge symmetry.
\end{lemma}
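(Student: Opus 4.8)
The plan is to verify directly that the action $S = \int \Lcal \, d^4 x$ with $\Lcal$ given in \eqref{eq:lagrangian-u1} is invariant under the local $U(1)$ transformation
\begin{equation}
\phi \mapsto e^{i q \theta} \phi,
\qquad
\psi \mapsto e^{i q \theta} \psi,
\qquad
A_\mu \mapsto A_\mu + \del_\mu \theta,
\end{equation}
for an arbitrary smooth real-valued function $\theta = \theta(t,x)$. First I would record how each building block in \eqref{eq:83GG} transforms. The curvature $F_{\mu\nu} = \del_\mu A_\nu - \del_\nu A_\mu$ is manifestly invariant since $\del_\mu \del_\nu \theta = \del_\nu \del_\mu \theta$, so the Yang--Mills term $-\tfrac14 F_{\mu\nu}F^{\mu\nu}$ is untouched. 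Next, the key computation is that the gauge-covariant derivatives transform covariantly: writing $D_\mu \phi = (\del_\mu - i q A_\mu)\phi$, one checks
\begin{equation}
D_\mu \big( e^{i q\theta}\phi\big)
= e^{i q\theta}\big(\del_\mu \phi + i q (\del_\mu\theta)\phi - i q A_\mu \phi - i q (\del_\mu \theta)\phi\big)
= e^{i q\theta} D_\mu \phi,
\end{equation}
and identically $D_\mu(e^{i q\theta}\psi) = e^{i q\theta} D_\mu\psi$. This is the heart of the matter and the only place where the precise form of $D_\mu$ (the $-iqA_\mu$ piece together with the $+\del_\mu\theta$ shift of $A_\mu$) is used.

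Given covariance of $D_\mu$, the remaining terms are handled by pairing each covariant derivative with a conjugate factor so the phases cancel. For the scalar kinetic term, $(D_\mu\phi)^* D^\mu\phi \mapsto (e^{iq\theta}D_\mu\phi)^*(e^{iq\theta}D^\mu\phi) = (D_\mu\phi)^* D^\mu\phi$ since $|e^{iq\theta}|=1$. The potential $V(\phi^*\phi) = \lambda(\phi^*\phi - v^2)^2$ depends only on the invariant combination $\phi^*\phi$, which maps to $(e^{iq\theta}\phi)^*(e^{iq\theta}\phi) = \phi^*\phi$, hence $V$ is invariant. The Dirac term $-i\psi^*\gamma^0\gamma^\mu D_\mu\psi$ maps to $-i(e^{iq\theta}\psi)^*\gamma^0\gamma^\mu(e^{iq\theta}D_\mu\psi) = -i\psi^*\gamma^0\gamma^\mu D_\mu\psi$, again by phase cancellation, and likewise the Yukawa-type coupling $g\phi^*\phi\,\psi^*\gamma^0\psi$ is a product of two separately invariant bilinears. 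Summing, $\Lcal$ is pointwise invariant, so in particular $S$ is invariant; this proves the claim.

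I do not anticipate a serious obstacle here: the statement is essentially the defining property of a gauge theory and the verification is term-by-term. The only point requiring a little care is the covariance identity for $D_\mu$, where one must not forget the inhomogeneous $+\del_\mu\theta$ contribution coming from differentiating $e^{iq\theta}$, which is exactly what the shift $A_\mu \mapsto A_\mu + \del_\mu\theta$ is designed to absorb. One could alternatively phrase the proof as: $\Lcal$ is built entirely out of $F_{\mu\nu}$, $\phi^*\phi$, $D_\mu\phi$, $D_\mu\psi$ and the bilinear $\psi^*\gamma^0(\,\cdot\,)$, each of which is either invariant or covariant with a cancelling phase, hence $\Lcal$ is invariant; I would present the short direct computation since it is transparent and self-contained.
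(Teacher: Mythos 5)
Your proposal is correct and follows essentially the same route as the paper's own proof: the same transformation law for $(A_\mu,\phi,\psi)$, the covariance identity $D_\mu(e^{iq\theta}\phi)=e^{iq\theta}D_\mu\phi$ (and likewise for $\psi$), invariance of $F_{\mu\nu}$ by commutation of partial derivatives, and term-by-term phase cancellation in $\Lcal$. No gaps.
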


\begin{proof}
The $U(1)$ gauge symmetry induces the following transformations
\bel{eq:U1-gauge-transf}
A_\mu \mapsto A_\mu ' 
:= 
A_\mu + \del_\mu \alpha, \quad 
\phi \mapsto \phi' 
:= 
e^{i q \alpha} \phi, \quad
\psi \mapsto \psi' 
:= 
e^{i q \alpha} \psi,
\ee
where $\alpha = \alpha (t, \bf{x})$ is some arbitrary function of space and time. This implies
\be 
\aligned
D_\mu \phi &\mapsto D_\mu ' \phi' 
= 
e^{i q \alpha} D_\mu \phi,
\\
D_\mu \psi & \mapsto D_\mu ' \psi' 
	= \del_\mu ( e^{i q \alpha} \psi) - i q ( A_\mu + \del_\mu \alpha ) e^{ i q \alpha} \psi = e^{i q \alpha} D_\mu \psi.
\endaligned
\ee
By the commutation of partial derivatives we immediately see that $F_{\mu \nu}$ is invariant under $U(1)$ gauge transformations.
Inserting these transformations into the Lagrangian one obtains
\be 
\aligned
\Lcal' 
&= -i \psi'^* \gamma^0  \gamma^\mu D'_\mu \psi'  -{1 \over 4} F'_{\mu \nu} F'^{\mu \nu} - (D'_\mu \phi')^* D' {}^\mu \phi' - V((\phi')^* \phi') + \phi '^* \phi' g\psi'^* \gamma^0 \psi'
\\
& = -i e^{-i q \alpha} \psi^* \gamma^0 \gamma^\mu \del_\mu ( e^{i q \alpha} \psi)
-i \big(e^{i q \alpha} \psi, -iq \gamma^\mu (A_\mu + \del_\mu \alpha) e^{i q \alpha} \psi) \big)
 -{1 \over 4}F_{\mu \nu} F^{\mu \nu} 
\\
	& \quad -(D_\mu \phi)^* e^{-i q \alpha}  e^{i q \alpha} D_\mu \phi - V(  \phi^* e^{-i q \alpha} e^{i q \alpha} \phi) + g \phi^* \phi e^{-i q \alpha} \psi^* \gamma^0 e^{i q \alpha} \psi
\\
& = - i \psi^* \gamma^0 \gamma^\mu D_\mu\psi -{1 \over 4} F_{\mu \nu} F^{\mu \nu} - (D {}_\mu \phi)^* D{}^\mu \phi - V(\phi^* \phi) + g\phi^* \phi \psi^* \gamma^0 \psi
= \Lcal.
\endaligned
\ee
Thus $\Lcal$ is invariant under the transformation \eqref{eq:U1-gauge-transf}. 
\end{proof}

 
\subsection{Propagation of an inhomogeneous Lorenz gauge}
Next, with a similar aim to that of Section \ref{subsec:dp-pde}, we will turn the PDE \eqref{eq:euler-lagrange-u1-no-perturb} into one of definite type by specifying a particular gauge for the vector field $A^\mu$. 
\begin{lemma}[The inhomogeneous Lorenz gauge] \label{lem:u1-gauge-propagate}
Let $X$ be a suitably regular scalar field. Consider the modified system 
\bel{eq:euler-lagrange-u1-no-perturb-mod} 
\aligned
\Box A^\nu- 2 q^2  A^\nu \phi^* \phi
& = 
\del^\nu X + iq \big( \phi^* (\del^\nu \phi) - (\del^\nu \phi^*) \phi \big) + q \psi^* \gamma^0 \gamma^\nu \psi,
\\
\Box \phi - V'(\phi^* \phi) \phi - iq \phi X
& = 
2iq A_\mu \del^\mu \phi + q^2 A^\mu A_\mu \phi  - g\phi \psi^* \gamma^0 \psi,
\\
i \gamma^\mu \del_\mu  \psi 
- g \phi^* \phi \psi
& = 
- q \gamma^\mu A_\mu \psi.
\endaligned
\ee
Suppose the initial data for \eqref{eq:euler-lagrange-u1-no-perturb-mod} satisfy
\bel{eq:u1-general-gauge}
\aligned
\big(\textrm{div}A - X \big) (t_0, \cdot) = 0,
\\ 
\del_t \big(\textrm{div}A - X \big) (t_0, \cdot) = 0.
\endaligned
\ee
Then as long as the solution to \eqref{eq:euler-lagrange-u1-no-perturb-mod} exists, it will satisfy $\div A =X$.
\end{lemma}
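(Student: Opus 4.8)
The plan is to show that the quantity $Y := \div A - X$ satisfies a homogeneous linear second-order hyperbolic equation (with zeroth-order coefficients depending on the solution), and then invoke uniqueness for such equations together with the vanishing Cauchy data \eqref{eq:u1-general-gauge} to conclude $Y \equiv 0$ as long as the solution exists. First I would apply $\del_\nu$ to the first equation in \eqref{eq:euler-lagrange-u1-no-perturb-mod}, summing over $\nu$, to get an expression for $\Box (\div A)$. The leading term is $\Box (\del_\nu A^\nu)$; on the right-hand side one gets $\Box X$ together with $\del_\nu$ applied to the $A^\nu \phi^*\phi$ term, the current term $iq(\phi^*\del^\nu\phi - (\del^\nu\phi^*)\phi)$, and the Dirac current $q\psi^*\gamma^0\gamma^\nu\psi$.

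The key step is then to use the other two field equations to rewrite $\del_\nu$ of the current terms. Taking the divergence of the Maxwell-type current and substituting the scalar equation (third line of \eqref{eq:euler-lagrange-u1-no-perturb-mod}) for $\Box\phi$, the potential terms $V'(\phi^*\phi)\phi$ and the $iq\phi X$ term will combine so that, after cancellation, the right-hand side is proportional to $X - \div A = -Y$ plus terms that are themselves multiples of $Y$ — this is the standard ``gauge propagation'' computation. Concretely, one expects to arrive at an identity of the schematic form $\Box Y + (\text{smooth coefficients})\, Y = 0$, where the coefficients involve $\phi,\psi,A$ and are as regular as the solution. I would also use the Dirac equation to handle $\del_\nu(\psi^*\gamma^0\gamma^\nu\psi)$: a direct computation gives $\del_\nu(\psi^*\gamma^0\gamma^\nu\psi) = (\del_\nu\psi^*)\gamma^0\gamma^\nu\psi + \psi^*\gamma^0\gamma^\nu\del_\nu\psi$, and substituting $i\gamma^\mu\del_\mu\psi = g\phi^*\phi\psi - q\gamma^\mu A_\mu\psi$ (and its conjugate) shows this divergence is real and in fact the $A_\mu$ and $\phi^*\phi$ contributions cancel, leaving a term that vanishes or is again absorbed — this is exactly the statement that the gauge current is conserved on solutions.

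Once the homogeneous wave equation $\Box Y = (\text{coefficients})\,Y$ with vanishing initial data \eqref{eq:u1-general-gauge} is in hand, the conclusion $Y\equiv 0$ follows from the standard uniqueness theorem for linear wave equations (finite speed of propagation / energy estimates) on the time interval of existence; since $Y=0$ then $\div A = X$ throughout. The main obstacle I anticipate is purely computational: carefully tracking the $\phi$-dependent lower-order terms through the substitution of the scalar equation so that everything genuinely organizes into a multiple of $Y$ rather than leaving a stray inhomogeneous source. In particular one must be careful with the term $iq\phi X$ versus $iq\phi\,\div A$ and with the $2q^2 A^\nu\phi^*\phi$ term, since these are precisely the places where $X$ and $\div A$ must be traded for one another; a sign error there would produce a spurious source and break the argument. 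I would double-check this by verifying that when $X = \div A$ identically the system \eqref{eq:euler-lagrange-u1-no-perturb-mod} reduces exactly to \eqref{eq:euler-lagrange-u1-no-perturb}, which is the consistency check that the gauge-fixing is legitimate.
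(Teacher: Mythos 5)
Your proposal is correct and follows essentially the same route as the paper: take the divergence of the vector equation, substitute the scalar and Dirac equations so that $Y=\div A - X$ satisfies a homogeneous linear wave equation with coefficients depending on the solution, and conclude $Y\equiv 0$ from the vanishing Cauchy data by uniqueness. Your explicit attention to trading $iq\phi X$ against $iq\phi\,\div A$ and the $2q^2A^\nu\phi^*\phi$ term is exactly the bookkeeping the paper's computation relies on.
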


\begin{proof}
To propagate the gauge choice $\div A = X$ imposed on the initial data we will take the divergence of the first equation in \eqref{eq:euler-lagrange-u1-no-perturb-mod} and use the second evolution equation \eqref{eq:euler-lagrange-u1-no-perturb-mod}: 
$$
\aligned
\Box ( \text{div} A - X )
& = iq (\phi^* \Box \phi - \Box \phi^* \phi ) + 2 q^2 \del_\nu ( A^\nu \phi^* \phi ) + q \del_\nu (\psi^* \gamma^0 \gamma^\nu \psi)
\\
&= 
	iq \phi^* \Big( 2i q A_\mu \del^\mu \phi + iq \phi \del_\mu A^\mu
	+ q^2 A_\mu A^\mu \phi + V' \phi - g \phi \psi^* \gamma^0 \psi \Big)
\\
& \quad 
	- iq \Big( -2i q A_\mu \del^\mu \phi^* - iq \phi^* \del_\mu A^\mu 
	+ q^2 A_\mu A^\mu \phi^* + (V' \phi)^* - g \phi^* \psi^* \gamma^0 \psi \Big) \phi 
\\	
& \quad + 2q^2 \del_\nu ( A^\nu \phi^* \phi) + q \psi^* \gamma^0 \gamma^\mu \del_\mu \psi + q \del_\mu \psi^* \gamma^0 \gamma^\mu \psi
\\
& = iq \big(\phi^* V' \phi - (V' \phi)^* \phi \big) 
  = 0, 
\endaligned
$$
where in the last line we used the specific Higgs potential \eqref{eq:83GG}.
\end{proof}


\subsection{Gauge choice for the abelian model}

The Higgs potential has a non-zero minimum at 
$\phi_0 := v e^{i \theta/v}$ 
where
\be 
V(\phi_0^* \phi_0) 
= 0,
\ee
and $\theta: \RR^{1+3} \to \RR$ is arbitrary. There is ambiguity in this minimum state due to the $U(1)$ symmetry represented by $\theta$. A particular choice of $\theta$ will break this $U(1)$ symmetry, and such a scenario is termed `spontaneous symmetry breaking'. We consider perturbations of the form 
$$ 
\chi 
:= \phi - \phi_0,
$$
where $\phi_0$ is \textit{constant} in space and time and has magnitude $|\phi_0|=v$. 
The following result is a consequence of Lemma \ref{lem:u1-gauge-propagate} by choosing $X = -iq(\phi_0^* \chi - \chi^* \phi_0)$. 

\begin{lemma}  \label{lem:u1-linearised-gauge}
Suppose the initial data satisfy the following gauge condition
\bel{eq:u1-linearised-gauge} 
\aligned
\big(\textrm{div}A +iq(\phi_0^* \chi - \chi^* \phi_0) \big) (t_0, \cdot) 
= 0,
\\ 
\del_t \big(\textrm{div}A + iq(\phi_0^* \chi - \chi^* \phi_0) \big) (t_0, \cdot) 
= 0.
\endaligned
\ee
Then the Euler-Lagrange equations for \eqref{eq:lagrangian-u1} are equivalent to those written in \eqref{eq:u101}. 
\end{lemma}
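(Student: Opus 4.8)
The plan is to apply Lemma~\ref{lem:u1-gauge-propagate} with the specific choice $X := -iq\big(\phi_0^*\chi - \chi^*\phi_0\big)$ and then rewrite the modified Euler--Lagrange system \eqref{eq:euler-lagrange-u1-no-perturb-mod} entirely in terms of the perturbation $\chi = \phi-\phi_0$. First I would note that, because $\del_\mu\phi_0 = 0$, the pair of conditions \eqref{eq:u1-general-gauge} for this $X$ is \emph{literally} the pair \eqref{eq:u1-linearised-gauge}; hence Lemma~\ref{lem:u1-gauge-propagate} applies and guarantees that any solution of \eqref{eq:euler-lagrange-u1-no-perturb-mod} with such data satisfies $\div A = X$, i.e.\ the constraint \eqref{eq:intro-u1-pde-modified-lorenz2}, for as long as it exists. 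Conversely, a solution of \eqref{eq:u101} satisfying \eqref{eq:u1-linearised-gauge} propagates this constraint and therefore solves the original gauge-invariant system \eqref{eq:euler-lagrange-u1-no-perturb}, so it suffices to check that, on the gauge slice $\div A = X$, the system \eqref{eq:euler-lagrange-u1-no-perturb-mod} is term-by-term identical to \eqref{eq:u101}.

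Next I would carry out the substitution $\phi = \phi_0+\chi$ in \eqref{eq:euler-lagrange-u1-no-perturb-mod}, using $\del^\nu\phi = \del^\nu\chi$, the expansion $\phi^*\phi = v^2 + \phi_0^*\chi + \chi^*\phi_0 + \chi^*\chi$, the identity $V'(\phi^*\phi) = 2\lambda(\phi^*\phi - v^2)$, and the definitions $m_q^2 = 2q^2 v^2$, $m_\lambda^2 = 4\lambda v^2$, $m_g = gv^2$. In the $A^\nu$ equation the only delicate point is that the gauge source $\del^\nu X$ must be recombined with the Higgs current: a short computation gives
\[
\del^\nu X + iq\big(\phi^*(\del^\nu\phi) - (\del^\nu\phi^*)\phi\big) = iq\big(\chi^*(\del^\nu\chi) - (\del^\nu\chi^*)\chi\big),
\]
so the linear-in-$\chi$ contributions cancel and precisely the first term of $Q_{A^\nu}$ remains, while $2q^2 A^\nu\phi^*\phi = m_q^2 A^\nu + 2q^2 A^\nu(\phi_0^*\chi + \chi^*\phi_0 + \chi^*\chi)$ supplies the Proca mass on the left and the remaining quadratic piece of $Q_{A^\nu}$ on the right.

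For the scalar equation, $-iq\phi X$ splits as $-iq\phi_0 X - iq\chi X$; using $(-iq)(-iq) = -q^2$ and $m_q^2 = 2q^2v^2$, the first piece is exactly $-m_q^2\tfrac{\phi_0}{2v^2}(\phi_0^*\chi - \chi^*\phi_0)$, which is retained on the left of \eqref{eq:u101}, and the second piece $+q^2\chi(\phi_0^*\chi - \chi^*\phi_0)$ goes into $Q_\chi$. Likewise $V'(\phi^*\phi)\phi = 2\lambda(\phi_0^*\chi + \chi^*\phi_0 + \chi^*\chi)(\phi_0+\chi)$ produces the $m_\lambda^2$ mass term $-m_\lambda^2\tfrac{\phi_0}{2v^2}(\phi_0^*\chi + \chi^*\phi_0)$ plus the cubic remainders $2\lambda\chi^*\chi\phi_0 + 2\lambda\chi(\phi_0^*\chi + \chi^*\phi_0 + \chi^*\chi)$ collected in $Q_\chi$; the terms $2iqA_\mu\del^\mu\phi + q^2A^\mu A_\mu\phi - g\phi\psi^*\gamma^0\psi$ account for the rest. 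The Dirac equation is a direct relabeling: $g\phi^*\phi\psi = m_g\psi + g(\phi_0^*\chi + \chi^*\phi_0 + \chi^*\chi)\psi$, which moves the quadratic/cubic part into $Q_\psi$ alongside $-q\gamma^\mu A_\mu\psi$. Collecting these identities yields \eqref{eq:u101} with the nonlinearities \eqref{eq:u1-nl}.

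The main obstacle is purely organizational: one must verify that every linear-in-$\chi$ term reorganizes exactly into the mass terms displayed on the left of \eqref{eq:u101} with the correct signs --- note in particular the asymmetry between the combination $\phi_0^*\chi - \chi^*\phi_0$ multiplying $m_q^2$ and the combination $\phi_0^*\chi + \chi^*\phi_0$ multiplying $m_\lambda^2$ --- and that all genuinely quadratic and cubic terms match the definitions in \eqref{eq:u1-nl}. The one step where something non-routine actually happens is the cancellation of $\del^\nu X$ against the linear part of the Higgs current in the $A^\nu$ equation; everything else is bookkeeping built on Lemma~\ref{lem:u1-gauge-propagate}.
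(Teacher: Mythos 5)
Your proposal is correct and follows exactly the route the paper intends: it invokes Lemma \ref{lem:u1-gauge-propagate} with $X = -iq(\phi_0^*\chi - \chi^*\phi_0)$ and then verifies that substituting $\phi = \phi_0 + \chi$ into \eqref{eq:euler-lagrange-u1-no-perturb-mod} reproduces \eqref{eq:u101} with the nonlinearities \eqref{eq:u1-nl}; the paper leaves this bookkeeping implicit, while you carry it out, including the key cancellation of $\del^\nu X$ against the linear part of the Higgs current and the correct identification of the mass terms via $m_q^2 = 2q^2v^2$, $m_\lambda^2 = 4\lambda v^2$, $m_g = gv^2$.
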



\section{Structure and nonlinearity of the models} 
\label{sec:nonlinearities}

\subsection{Aim of this section}

In this section we will transform the variables in \eqref{eq:u101} in order to treat difficult terms in their nonlinearities. 
The nonlinearities, defined in \eqref{eq:u1-nl}, which require additional work to control are of the following two types
\bel{eq:w-w}
\psi^* \gamma^0 \gamma^\nu \psi,
\qquad
\psi^* \gamma^0 \psi,
\ee
appearing in $Q_{A^\nu}$ and $Q_\chi$ respectively. 
If the mass of the Dirac spinor is small or zero, that is $0 \leq m_g \ll \min (m_q, m_\lambda)$, then the nonlinearities in \eqref{eq:w-w} will have insufficiently fast decay for the bootstrap argument to close1. To address these issues, we employ a transformation introduced by \cite{Tsutsumi}, see \eqref{eq:def-B-vec} and \eqref{eq:def-tildechi}  below. 
Finally although the equation satisfied by $\chi$ is of ambiguous type, we find that $\chi$ can be decomposed into two components, with each component satisfying a Klein-Gordon equation.

 
\subsection{Hidden null structure from Tsutsumi}
Following Tsutsumi \cite{Tsutsumi} we can uncover a null structure using a particular transformation. Significantly however, we will not need to reduce to the `strong null forms' via any complicated transformations, as in \cite[eq (2.6)]{Tsutsumi2}. 

For complex-valued functions $\Phi(t,x), \Psi(t,x): \RR^{3+1} \to \CC^n$, recall the null form
\be 
Q_0(\Phi, \Psi)
:= (\del_0 \Phi)^* \del_0 \Psi - ( \nabla \Phi)^* \nabla \Psi.
\ee
Define a new variable 
\bel{eq:def-B-vec}
\tildeA^\nu 
:= A^\nu 
	+ {q \over m_q^2} \psi^* \gamma^0 \gamma^\nu \psi.
\ee
This satisfies the non-linear Klein-Gordon equation 
\bel{eq:full-B-pde}
\big( \Box - m_q^2 \big) \tildeA^\nu
= Q_{\tildeA^\nu},
\ee
in which the nonlinearities are
$$
\aligned
Q_{\tildeA^\nu}
&:=
    {2 q \over m_q^2} Q_0 (\psi, \gamma^0 \gamma^\nu \psi)
	+ {q \over m_q^2} G_\psi^* \gamma^0 \gamma^\nu \psi
	+ {q \over m_q^2} \psi^* \gamma^0 \gamma^\nu G_\psi
	+ {2 q  \over m_q^2} m_g^2\psi^* \gamma^0 \gamma^\nu \psi,
\\
G_\psi 
&:= - m_g Q_\psi - i \gamma^\nu \del_\nu Q_\psi
\\
& \, = 
	- m_g \big(  g (\phi_0^* \chi + \chi^* \phi_0 + \chi^* \chi) \psi -  q \gamma^\mu A_\mu \psi \big)
\\
& \quad
	- i g \gamma^\nu \del_\nu (\phi_0^* \chi + \chi^* \phi_0 + \chi^* \chi) \psi + i q \gamma^\nu \gamma^\mu \del_\nu A_\mu \psi
\\
& \quad	
	- g (\phi_0^* \chi + \chi^* \phi_0 + \chi^* \chi) (m_g \psi + Q_\psi)
	- 2ig A^\mu \del_\mu \psi - g A_\mu \gamma^\mu  (m_g \psi + Q_\psi),
\endaligned	
$$
where \eqref{eq:psi-second-order} and \eqref{eq:u1-nl} were used to compute $G_\psi$. 
Note the nonlinearity $\psi^* \gamma^0 \gamma^\nu \psi$ in \eqref{eq:full-B-pde} now appears with a good factor $m_g^2$. We will control this term using Propositon \ref{prop:energy-decop}. 


\subsection{Decomposition of $\chi$}

In order to study the behaviour of $\chi$, it is more convenient to consider equations for the following two variables
\be
\chi_\pm 
:= \phi_0^* \chi \pm \chi^* \phi_0.
\ee
Since the following identity holds
\bel{eq:chi-equivalent}
| \chi_+ |^2 + | \chi_- |^2
=
2 v^2 | \chi |^2,
\ee
it is equivalent to estimate either $\chi$ or $\chi_\pm$.
These new variables satisfy the Klein-Gordon equations \begin{align}
\label{eq:chi+}
\Box \chi_+ - m_\lambda^2 \chi_+
= 
Q_{\chi_+},
\\
\label{eq:chi-}
\Box \chi_- - m_q^2 \chi_-
= 
Q_{\chi_-},
\end{align}
with
\begin{align*}
Q_{\chi_+}
&:=
	2 i q A_\mu \del^\mu \chi_- + q^2 \chi_-^2 + q^2A^\mu A_\mu \big(2 v^2 + \chi_+ \big)
\\
&
\quad
	 +4 \lambda v^2 \chi^* \chi + 2 \lambda \chi_+^2 + 2 \lambda \chi_+ \chi^* \chi - g \big(2 v^2 + \chi_+ \big) \psi^* \gamma^0 \psi,
\\
Q_{\chi_-}
&:=
2 i q A_\mu \del^\mu \chi_+ + q^2 \chi_- \chi_+ + q^2 A_\mu A^\mu \chi_-
+
2 \lambda \chi_- \chi_+ + 2 \lambda \chi_- \chi^* \chi - g \chi_- \psi^* \gamma^0 \psi.
\end{align*}
Following Tsutsumi again \cite{Tsutsumi}, we define the new variable 
\bel{eq:def-tildechi}
\tildechi_+
:=
\chi_+ - {2 m_g \over m_\lambda^2} \psi^* \gamma^0 \psi,
\ee
which satisfies the following Klein-Gordon equation
\be 
\Box \tildechi_+ - m_\lambda^2 \tildechi_+
=
Q_{\tildechi_+}
\ee
with the nonlinearity
$$
Q_{\tildechi_+}
:=
-4 {m_g \over m_\lambda^2} Q_0(\psi, \gamma^0 \psi) + 2{m_g \over m_\lambda^2} G_\psi^* \gamma^0 \psi 
+ 2 {m_g \over m_\lambda^2} \psi^* \gamma^0 G_\psi - 4 { m_g^3 \over m_\lambda^2} \psi^* \gamma^0 \psi + Q_{\chi_+} + 2 m_g \psi^* \gamma^0 \psi.
$$
The final term $2 m_g \psi^* \gamma^0 \psi$ here cancels the problematic term in $Q_{\chi_+}$ and any other nonlinearities of the form $\psi^* \gamma^0 \psi$ now appear with a good factor of $m_g^3$ in front.

Note that in the case $0 \leq m_g \ll \min (m_q, m_\lambda)$, the second order formulation \eqref{eq:psi-second-order} of the Dirac equation is more like a nonlinear wave equation. In this case we do not have good bounds for either the $L^2_f$ or $L^\infty$ norm of $\psi$ and this is why the null structure of \eqref{eq:full-B-pde} and the factor $m_g^2$ in front of the $\psi$--$\psi$ interaction are needed. 
In the case $m_g \sim  \min (m_q, m_\lambda) $, better Klein-Gordon bounds are available and we do not require such a particular structure anymore.


\section{Bootstrap argument} 
\label{sec:bootstrap}

\subsection{Overview}

This section is devoted to using a bootstrap argument to prove Theorem \ref{thm:U1}. After the treatment of the equations in Section \ref{sec:nonlinearities}, we remind one that in the following analysis we will deal with the unknowns
\be 
\tildeA^\nu
=A^\nu + {q \over m_q^2} \psi^* \gamma^0 \gamma^\nu \psi,
\quad
\tildechi_+
=\chi_+ - {2 m_g \over m_\lambda^2} \psi^* \gamma^0 \psi,
\quad
\chi_-,
\quad
\psi,
\ee
which satisfy equations
$$
\aligned
\Box \tildeA^\nu - m_q^2 \tildeA^\nu
&= Q_{\tildeA^\nu},
\\
\Box \tildechi_+ - m_\lambda^2 \tildechi_+
&=
Q_{\tildechi_+}
\\
\Box \chi_- - m_q^2 \chi_-
&= 
Q_{\chi_-},
\\
i \gamma^\mu \del_\mu \psi - m_g \psi 
&= Q_{\psi}.
\endaligned
$$

We also provide here the following theorem, which details the $L^2$ and $L^\infty$ estimates of the unknowns. 

\begin{theorem} \label{thm:full-U(1)} 
Under the smallness assumptions in Theorem \ref{thm:U1} the solution satisfies the following energy estimates
\be
\aligned
\big\|(s/t) \del^I L^J \del_\mu (A_\nu,  \chi) \big\|_{L^2_f(\Hcal_s)} + \big\| (s/t) \del_\mu \del^I L^J (A_\nu, \chi) \big\|_{L^2_f(\Hcal_s)}
& \lesssim
	C_1 \eps, 
 \quad |I|+|J|\leq N,
\\
\big\|\del^I L^J (A_\nu, \chi) \big\|_{L^2_f(\Hcal_s)}
& \lesssim
	C_1 \eps, 
\quad |I|+|J|\leq N,
\\
\big\| (s/t) \del^I L^J \psi \big\|_{L^2_f(\Hcal_s)} 
& \lesssim
	C_1 \eps, 
\quad |I|+|J|\leq N,
\\	
\big\| (s/t) \del_\alpha \del^I L^J \psi \big\|_{L^2_f(\Hcal_s)} 
& \lesssim
	C_1 \eps \log s, 
\quad |I|+|J|= N,
\endaligned
\ee
and the following $L^\infty$ estimates
\be 
\aligned
\sup_{(t,x) \in \Hcal_s} 
	\Big(t^{1/2} s \big|\del_\alpha \del^I L^J (A_\nu, \chi) \big| + t^{1/2} s \big| \del^I L^J \del_\alpha  (A_\nu, \chi) \big| \Big) 
& \lesssim
	C_1 \eps, \quad |I| + |J| \leq N-2,
\\
\sup_{(t,x) \in \Hcal_s} \Big(t^{3/2} \big|\del^I L^J A_\nu, \del^I L^J \chi \big| \Big)
& \lesssim
	C_1 \eps, \quad |I| + |J| \leq N-2,
\\
\sup_{(t,x) \in \Hcal_s} \Big(t^{1/2} s \big|\del^I L^J \psi \big| \Big)
& \lesssim
	C_1 \eps, \quad |I| + |J| \leq N-2,	
\\
\sup_{(t,x) \in \Hcal_s} \Big(t^{1/2} s \big|\del_\alpha \del^I L^J \psi \big| \Big)
& \lesssim
	C_1 \eps \log s, \quad |I| + |J| = N-2,
\endaligned
\ee
where $C_1$ is some constant to be determined later.
\end{theorem}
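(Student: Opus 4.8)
The plan is to run a continuity/bootstrap argument on the hyperboloidal time interval $[s_0, s_1)$ for the transformed unknowns $(\tildeA^\nu, \tildechi_+, \chi_-, \psi)$. I would first fix the bootstrap assumptions to be exactly the claimed estimates but with $C_1$ replaced by $2C_1$ (or $C_1$ at the top order and a smaller constant at lower orders, to allow for a hierarchy), valid on $[s_0, s_1)$, and then show they can be improved to hold with the sharp constant, which by a standard local existence plus continuity argument forces $s_1 = +\infty$. The choice of unknowns is essential: because $\tildeA^\nu$ and $\tildechi_+$ satisfy Klein--Gordon equations whose worst $\psi$--$\psi$ source terms carry either a null-form structure $Q_0(\psi, \gamma^0\gamma^\nu\psi)$ or an explicit factor $m_g^2$ (resp.\ $m_g^3$), the dangerous wave--wave interactions are defanged.

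The key steps, in order. (i) \textbf{Commutator and algebra preliminaries}: commute the Klein--Gordon equations with $\del^I L^J$ and the Dirac equation with $\del^I \Lh^J$ (using the modified boosts of \eqref{eq:modified-lorentz-boosts}, which commute with $i\gamma^\nu\del_\nu$), keeping track that $[\del^I L^J, \Box - m^2]$ and $[\Lh^J, \gamma^0 - \gamma^a x_a/t]$ produce only ``good'' lower-order terms, as in Proposition \ref{prop:energy-decop}. (ii) \textbf{Energy estimates}: apply the standard hyperboloidal Klein--Gordon energy inequality to $\Ecal_{m_q}, \Ecal_{m_\lambda}$ for the $\del^I L^J$-differentiated fields, and apply the first-order Dirac energy estimate \eqref{eq:1st-EE} (together with the positivity \eqref{eq:lowbound1}) to $\del^I \Lh^J \psi$; this reduces everything to $L^1_f(\Hcal_s)$ bounds on the commuted nonlinearities. (iii) \textbf{Sobolev decay}: feed the $L^2_f$ energy bounds into the Sobolev inequality \eqref{eq:Sobolev3} (and Corollary \ref{cor:Sobolev-type-inequality3} for $\psi$) to obtain the $L^\infty$ estimates at order $\leq N-2$, and, crucially for $\psi$, combine Proposition \ref{prop:energy-decop} (which exploits the second, $(s^2/t^2)$, half of $E^\Hcal$ together with the Dirac equation itself) to get the $t^{-1}$ versus $m_g^{-1}t^{-3/2}$ dichotomy uniformly in $m_g$. (iv) \textbf{Nonlinearity estimates}: bound each term of $Q_{\tildeA^\nu}, Q_{\tildechi_+}, Q_{\chi_-}, Q_\psi$ in $L^1_f(\Hcal_s)$ using the bootstrap $L^\infty$ bounds on low-order factors and $L^2_f$ bounds on high-order factors, the extra $(s/t)$ and $t^{-1}$ weights available inside $\Hcal_s$, the null-form gain for $Q_0$ terms (which, after commutation, always leaves at least one factor in a ``good'' derivative $\underline\del_a$ or with an $s/t$ weight), and the explicit $m_g^2$, $m_g^3$ factors that convert a potentially non-integrable $\psi$--$\psi$ source into one controlled by the weak $\psi$ decay \eqref{eq:decay-part1}. (v) \textbf{Closing the loop}: check that the resulting integrals $\int_{s_0}^s \|\cdot\|_{L^1_f}\,d\bar s$ are bounded by $\eps + (C_1\eps)^2$ for the wave/KG energies (giving no growth) and by $C_1\eps\log s$ for the top-order Dirac energy (the $\log$ coming from a borderline $\int \bar s^{-1}\,d\bar s$), which beats the bootstrap constants for $\eps$ small, with the smallness threshold $\eps_0$ independent of $m_g$.

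The main obstacle is step (iv)--(v) for the Dirac field when $0 \leq m_g \ll \min(m_q,m_\lambda)$: here $\psi$ has only the weak decay $|\psi| \lesssim \eps\, s\, t^{-1/2}$ rather than genuine Klein--Gordon decay, so the source $Q_\psi = g(\phi_0^*\chi + \dots)\psi - q\gamma^\mu A_\mu\psi$ has just barely enough decay (using the $t^{-3/2}$ decay of $A,\chi$ and the $\log$-lossy $L^2$ bound on $\del\psi$) to give a logarithmically growing top-order energy and no growth at lower orders; simultaneously the $\psi$--$\psi$ terms feeding back into the $A^\nu$ and $\chi_+$ equations must be shown integrable \emph{only} because of the $m_g^2$/$m_g^3$ prefactors combined with Proposition \ref{prop:energy-decop}, and one must verify that the constant in ``$\eps + C_1^2$'' (not ``$\eps + (C_1\eps)^2$'') is still absorbable --- which works precisely because the $m_g^k$ factor is paired against $m_g^{-1}$-type losses from the weak Dirac decay so that the net power of $m_g$ is nonnegative and the estimate is uniform down to $m_g = 0$. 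Getting the bookkeeping of these $m_g$ powers exactly right, uniformly across the whole range $m_g \in [0,\min(m_q,m_\lambda)]$, is the crux of the argument.
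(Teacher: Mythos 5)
Your strategy is correct and essentially coincides with the paper's proof: a hyperboloidal bootstrap closed via the commutator and Sobolev estimates of Section 7.2, the first-order Dirac energy with modified boosts $\Lh^J$ (Theorem \ref{thm:sup-Dirac1}), Proposition \ref{prop:energy-decop} for the $m_g$-weighted decay, and the Tsutsumi-type variables $\tildeA^\nu$, $\tildechi_+$ to defuse the $\psi$--$\psi$ sources. The only (cosmetic) difference is that the paper places the bootstrap assumptions on the second-order energies of the \emph{original} unknowns $(A_\nu,\chi,\psi)$ and passes to the transformed variables only in the refinement step, and it is the exact cancellation $i\psi^*\gamma^0 H\psi - i(H\psi)^*\gamma^0\psi=0$ in the first-order energy identity (rather than $m_g$-power bookkeeping alone) that removes the $\eps + C_1^2$ obstruction for the Dirac component.
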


\paragraph*{Strategy of the proofs of Theorems \ref{thm:U1}  and \ref{thm:full-U(1)}.}

The proofs are based on a bootstrap argument. In Section \ref{subsec:nullcom} we recall standard estimates for null terms, various commutators and Sobolev-type estimates on hyperboloids. The bootstrap assumptions will be made in \eqref{eq:bootsassumption0}. These bootstraps, combined with some standard commutator estimates and Sobolev-type inequalities, will lead to certain weak estimates in \eqref{eq:directL2} and \eqref{eq:directLinf}. In Section \ref{subsec:dirac909} we will use our first-order hyperboloidal energy to upgrade our estimates for the Dirac component, namely in Theorem \ref{thm:sup-Dirac1} and Corollary \ref{corol:sup-Dirac}. In Section \ref{subsec:refined909} we obtain estimates for the transformed variables $\tilde{A}^\nu$ and $\tilde{\chi}_+$ defined above. Putting all of this together we finally are able to close our bootstrap assumptions. 


\subsection{Standard Estimates: null forms, commutators and pointwise estimates} \label{subsec:nullcom}

We first illustrate estimates for the quadratic null terms, which, roughly speaking, reveal the following bounds 
$$
\big| \del^\alpha u \del_\alpha v \big|
\lesssim
\big(s^2 / t^2\big) \big| \del u \big| \big| \del v \big|.
$$

\begin{lemma}
We have the following estimate for the quadratic null term $T^{\alpha \beta} \del_\alpha u \del_\beta v$ with constants $T^{\alpha \beta}$ and $u,v$ sufficiently regular. 
\bel{eq:est-null1}
\aligned
&\quad
\big| \del^I L^J (T^{\mu \nu} \del_\mu u  \del_\nu  v) \big|
\\ 
&\lesssim 
\sum_{\substack{| I_1 | + | I_2 | \leq | I |, \\ |J_1| + |J_2| \leq |J|, \\ a, \beta}} 
   \Big( \big| \del^{I_1} L^{J_1} \underline{\del}_a u \big| \big| \del^{I_2} L^{J_2} \underline{\del}_\beta v  \big| 
       + \big| \del^{I_1} L^{J_1} \underline{\del}_\beta u | | \del^{I_2} L^{J_2} \underline{\del}_a v \big| \Big)
\\
&\quad
 + (s/t)^2 \sum_{\substack{| I_1 | + | I_2 | \leq | I |,\\ |J_1| + |J_2| \leq |J| }} \big|\del^{I_1} L^{J_1} \del_t u \big| \big| \del^{I_2} \L^{J_2} \del_t v \big|.
\endaligned
\ee
\end{lemma}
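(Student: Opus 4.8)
The plan is to reduce everything to the key pointwise inequality for a single null contraction and then push the boost and derivative operators through. First I would recall the basic algebraic fact about constant-coefficient null forms: if $T^{\mu\nu}$ is such that $T^{\mu\nu}\xi_\mu\xi_\nu = 0$ for all null covectors $\xi$ (equivalently $T^{\mu\nu}$ is a linear combination of $\eta^{\mu\nu}$ and antisymmetric terms, the latter being the $Q_{\mu\nu}$ forms), then in the semi-hyperboloidal frame one has the decomposition
\[
T^{\mu\nu}\del_\mu u\,\del_\nu v
= \sum_{a,\beta} c^{a\beta}_{(1)}\,\underline{\del}_a u\,\underline{\del}_\beta v
 + \sum_{a,\beta} c^{a\beta}_{(2)}\,\underline{\del}_\beta u\,\underline{\del}_a v
 + \frac{s^2}{t^2}\, c\,\del_t u\,\del_t v,
\]
where the coefficients $c^{a\beta}_{(\cdot)}, c$ are smooth and \emph{bounded} functions of $x/t$ in the cone $\mathcal K$. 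This is the standard computation: writing $\del_\alpha = \underline{\del}_\alpha - (x_\alpha/t)\del_t$ (with $\underline{\del}_0 = \del_t$), the ``worst'' term $\del_t u\,\del_t v$ appears multiplied precisely by $T^{\mu\nu}(x_\mu/t)(x_\nu/t)$, and the null condition forces this quantity to vanish to leading order, leaving a factor $1 - (r/t)^2 = s^2/t^2$. Every other term in the expansion carries at least one tangential derivative $\underline{\del}_a$. Establishing this clean decomposition with bounded coefficients is the main substantive point; the rest is bookkeeping.

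Next I would handle the action of $\del^I L^J$. The point is that the coefficients $c^{a\beta}_{(\cdot)}$ and $c$, together with the frame conversion factors $\Phi^\beta_\alpha$, $\Psi^\beta_\alpha$ and the weight $s^2/t^2$, all have the property that applying any combination of $\del_\alpha$ and $L_a$ produces again bounded functions of $x/t$ on $\mathcal K$ (the derivatives of these homogeneous-degree-zero quantities gain negative powers of $t$ or $s$ which are harmless inside the cone). Likewise $[L_a, \underline{\del}_b]$ and $[L_a, \del_t]$ are again of the schematic form $\sum (\text{bounded})\cdot \underline{\del}$ or $(\text{bounded})\cdot \del_t$. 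Therefore, applying $\del^I L^J$ to the decomposition above and using the Leibniz rule, each resulting term is a product of: a bounded coefficient, a factor $\del^{I_1}L^{J_1}$ (of either a $\underline{\del}_a u$, a $\underline{\del}_\beta u$, or a $\del_t u$) and a factor $\del^{I_2}L^{J_2}$ (of the corresponding partner of $v$), with $|I_1|+|I_2|\le|I|$ and $|J_1|+|J_2|\le|J|$, and in the last family the extra weight $(s/t)^2$ survives because it commutes harmlessly past $L^J$. Taking absolute values and summing gives exactly the right-hand side of \eqref{eq:est-null1}.

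The main obstacle I anticipate is purely organizational rather than deep: keeping careful track of how $\del^I L^J$ distributes over the \emph{variable} coefficients (the $x/t$-dependent factors and the weight $s^2/t^2$) so that no spurious growth in $t$ or $s$ is introduced, and checking that commutators of boosts with the semi-hyperboloidal frame vectors do not degrade the derivative/boost count. One has to verify that quantities like $L^J(x_a/t)$, $L^J((s/t)^2)$ remain $O(1)$ on $\mathcal K$ — this follows since $x_a/t$ and $s^2/t^2 = 1-(r/t)^2$ are homogeneous of degree $0$ and the boosts are, up to bounded factors, the generators of the relevant symmetry, but it must be stated cleanly. Once that lemma on the boundedness of boosted coefficients is in hand, the estimate \eqref{eq:est-null1} follows by a straightforward induction on $|I|+|J|$.
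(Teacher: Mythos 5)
Your proposal is correct and follows essentially the same route as the proof the paper defers to in \cite{PLF-YM-book}: rewrite the null form in the semi-hyperboloidal frame so that the coefficient of $\del_t u\,\del_t v$ is $T(\xi,\xi)$ with $\xi=(x_\mu/t)$, use the null condition to extract the factor $1-(r/t)^2=(s/t)^2$, and then distribute $\del^I L^J$ by Leibniz using the boundedness of $\del^I L^J$ applied to the degree-zero homogeneous coefficients. No gap; the one point worth stating explicitly is that the null condition on $T^{\alpha\beta}$ (implicit in the phrase ``null term'') is genuinely needed, exactly as you observe.
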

One refers to \cite{PLF-YM-book} for the proof.
We next recall the estimates for commutators, also proved in \cite{PLF-YM-book}. 
Heuristically speaking, the following lemma provides the relation
$$
\big| \Gamma \del^I L^J u \big|
\simeq
\big| \del^I L^J \Gamma u \big|,
$$
in which $\Gamma$ takes values from the set
$$
\{ \del_\alpha, \underline{\del}_\alpha, \del_\alpha \del_\beta, (s/t) \del_\alpha \}.
$$

\begin{lemma} \label{lem:est-comm}
Assume a function $u$ defined in the region $\mathcal{K}$ is regular enough, then we have
\begin{align}
\label{eq:est-cmt1}
\big| [\del^I L^J, \del_\alpha] u \big| 
&\leq 
C(| I |, |J|) \sum_{|J'|<|J|, \beta} \big|\del_\beta \del^I L^{J'} u \big|,
\\
\label{eq:est-cmt2}
\big| [\del^I L^J, \underline{\del}_a] u \big| 
&\leq 
C(| I |, |J|) \Big( \sum_{| I' |<| I |, | J' |< | J |, b} |\underline{\del}_b \del^{I'} L^{J'} u | + t^{-1} \sum_{| I' |\leq | I |, |J'|\leq |J|} | \del^{I'} L^{J'} u | \Big),
\\
\label{eq:est-cmt3}
\big| [\del^I L^J, \underline{\del}_\alpha] u \big| 
&\leq 
C(| I |, |J|) \Big( \sum_{| I' |<| I |, | J' |< | J |, \beta} \big|\del_\beta \del^{I'} L^{J'} u \big| 
+ t^{-1} \sum_{| I' |\leq | I |, |J'|\leq |J|, \beta} \big| \del_\beta \del^{I'} L^{J'} u \big| \Big),
\\
\label{eq:est-cmt4}
\big| [\del^I L^J, \del_\alpha \del_\beta] u \big| 
&\leq 
C(| I |, |J|) \sum_{| I' |\leq  | I |, |J'|<|J|, \gamma, \gamma '} \big| \del_\gamma \del_{\gamma '} \del^{I'} L^{J'} u \big|,
\\ 
\label{eq:est-cmt6}
\big| \del^I L^J ((s/t) \del_\alpha u) \big| 
& \leq 
\big|(s/t) \del_\alpha \del^I L^J u \big| 
+ C(| I |, |J|) \sum_{| I' |\leq  | I |, |J'|\leq |J|, \beta } \big|(s/t) \del_\beta \del^{I'} L^{J'} u \big|.
\end{align}
\end{lemma}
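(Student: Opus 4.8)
The plan is to establish the five bounds \eqref{eq:est-cmt1}--\eqref{eq:est-cmt6} simultaneously by a single induction on the total order $|I|+|J|$, reducing everything to a short list of elementary commutators for the basic vector fields and weights on the cone $\mathcal{K}$.

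\textbf{Step 1 (elementary identities).} First I would record the exact commutators: $[\del_\alpha,\del_\beta]=0$; $[L_a,\del_\beta]$ equals $-\del_t$, $-\del_a$, or $0$ according to whether $\beta=a$, $\beta=0$, or neither (in all cases a constant-coefficient first-order operator); and $[L_a,L_b]=\Omega_{ab}=\tfrac{x^a}{t}L_b-\tfrac{x^b}{t}L_a$, i.e. a bounded-coefficient combination of single boosts. For the weights I would use $L_a s=0$, $[L_a,t^{-1}]=-x^a t^{-2}$ (hence $L_a(s/t)=-\tfrac{s}{t}\tfrac{x^a}{t}$), $\del_\alpha(x^b/t)=O(t^{-1})$, $L_a(x^b/t)=O(1)$ on $\mathcal{K}$, and $|\del_\alpha(s/t)|\lesssim 1/s\lesssim s/t$ on $\mathcal{K}$ (using $s^2=t^2-r^2\geq t$ there). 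The two structural facts I want to extract are: (i) a boost hitting a coefficient $x^b/t$ or a transition-matrix entry $\Psi^\beta_\alpha$ produces again a quantity bounded on $\mathcal{K}$, so iterated boosts generate no growth; and (ii) a derivative hitting the weight $s/t$ reproduces a factor $\lesssim s/t$, which is precisely what forces $s/t$ on both sides of \eqref{eq:est-cmt6}.

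\textbf{Step 2 (the induction).} For $|I|+|J|=0$ every commutator is zero. For the inductive step I would peel off the outermost factor, $\del^I L^J=\Gamma_1\,\del^{I'}L^{J'}$ with $\Gamma_1\in\{\del_\alpha,L_a\}$, and for any fixed operator $\Theta$ among $\del_\beta$, $\underline{\del}_a$, $\del_\alpha\del_\beta$, $(s/t)\del_\alpha$ expand $[\del^I L^J,\Theta]=\Gamma_1\,[\del^{I'}L^{J'},\Theta]+[\Gamma_1,\Theta]\,\del^{I'}L^{J'}$. The first term is handled by the inductive hypothesis and the second by Step 1. For \eqref{eq:est-cmt1} ($\Theta=\del_\beta$) the second term is $0$ if $\Gamma_1=\del_\alpha$ and a constant-coefficient $\del_\gamma\del^{I'}L^{J'}u$ with one fewer boost if $\Gamma_1=L_a$, which reproduces the sum over $|J'|<|J|$. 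For \eqref{eq:est-cmt4} ($\Theta=\del_\alpha\del_\beta$) one uses $[L_a,\del_\alpha\del_\beta]=[L_a,\del_\alpha]\del_\beta+\del_\alpha[L_a,\del_\beta]$, a constant-coefficient second-order operator, giving $|I'|\leq|I|$, $|J'|<|J|$ with two derivatives. For \eqref{eq:est-cmt2}--\eqref{eq:est-cmt3} ($\Theta=\underline{\del}_a=L_a/t$) one writes $[\Gamma_1,L_a/t]=t^{-1}[\Gamma_1,L_a]+[\Gamma_1,t^{-1}]L_a$: the first piece produces $\underline{\del}$-type terms of strictly lower order (after rewriting $\Omega$ via Step 1), and the second piece, since $[\del_\beta,t^{-1}]=O(t^{-2})$ and $[L_b,t^{-1}]=-x^b t^{-2}$, is exactly the source of the $t^{-1}\sum|\del^{I'}L^{J'}u|$ (resp. $t^{-1}\sum|\del_\beta\del^{I'}L^{J'}u|$) terms. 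Finally \eqref{eq:est-cmt6} is the Leibniz expansion of $\del^I L^J\big((s/t)\del_\alpha u\big)$: the term in which all derivatives land on $\del_\alpha u$ is $(s/t)\del_\alpha\del^I L^J u$, and in every other term at least one derivative hits $s/t$, producing by Step 1(ii) a factor $\lesssim s/t$ times a bounded coefficient times a lower-order derivative of $u$, which after using \eqref{eq:est-cmt1} to return the surviving $\del_\alpha$ to the front gives the stated remainder.

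\textbf{Main obstacle.} The estimates are not deep, but the bookkeeping is delicate: one must track at each step exactly which multi-index strictly decreases (the boost count in \eqref{eq:est-cmt1} and \eqref{eq:est-cmt4}) versus which stays fixed at the cost of a factor $t^{-1}$ (in \eqref{eq:est-cmt2}--\eqref{eq:est-cmt3}), and one must confirm that the coefficients $x^b/t$ and the weight $s/t$ are merely \emph{preserved}, never amplified, under repeated boosts, so that no hidden logarithmic or polynomial loss creeps into $C(|I|,|J|)$. Because $\del^I$ and $L^J$ do not commute, keeping them in the prescribed order while commuting forces one to invoke \eqref{eq:est-cmt1} inside the proofs of the remaining four, so the induction genuinely has to be run on all five statements at once. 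As this is by now standard hyperboloidal-foliation material, one may alternatively simply cite \cite{PLF-YM-book}.
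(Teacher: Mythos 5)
Your proposal is correct, but note that the paper itself gives no proof of this lemma — it simply recalls the estimates from \cite{PLF-YM-book}, and your induction on $|I|+|J|$ with the elementary commutators $[L_a,\del_\beta]$, $[L_a,L_b]=\Omega_{ab}$, $[\Gamma, t^{-1}]$ and the weight bounds on $s/t$ and $x^a/t$ is essentially the argument carried out there. The only point worth making explicit is that \eqref{eq:est-cmt6} needs the bound $|\del^{I_1}L^{J_1}(s/t)|\lesssim s/t$ on $\mathcal{K}$ for \emph{all} orders $|I_1|+|J_1|\geq 1$, not just the first-order identities you list in Step 1; this is itself a standard lemma of the hyperboloidal framework (using $s^2\geq t$ on $\mathcal{K}$) and closes the argument.
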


Furthermore we can obtain $L^\infty$ estimates by recalling the following Sobolev-type inequality on hyperboloids \cite{PLF-YM-book}.

\begin{proposition}[Sobolev-type inequality on hyperboloids]
\label{prop:Sobolev-type-inequality2}
For all sufficiently smooth functions $u= u(t, x)$ supported in the region $\{(t, x): |x|< t - 1\}$, then for $s\geq 2$ one has 
\bel{eq:Sobolev2}
\sup_{\Hcal_s} \big| t^{3/2} u(t, x) \big| 
\lesssim \sum_{| J |\leq 2} \big\| L^J u \big\|_{L^2_f(\Hcal_s)},
\ee
where the summation is over  Lorentz boosts $L$. Note the implied constant is uniform in $s \geq 2$, and one recalls that $t = \sqrt{s^2 + |x|^2}$ on $\Hcal_s$.
\end{proposition}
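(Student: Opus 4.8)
The statement is the Sobolev-type inequality on hyperboloids from \cite{PLF-YM-book}, and the plan is to reduce it to the classical embedding $W^{2,2}(\RR^3)\hookrightarrow L^\infty(\RR^3)$ via a localization and rescaling adapted to the geometry of $\Hcal_s$. First I would fix an arbitrary point $(t_*,x_*)\in\Hcal_s$, so that $t_*=\sqrt{s^2+|x_*|^2}$, and restrict $u$ to the hyperboloid by setting $\bar u_s(x):=u(\sqrt{s^2+|x|^2},x)$ for $x\in\RR^3$; the target inequality becomes $|\bar u_s(x_*)|\lesssim t_*^{-3/2}\sum_{|J|\le 2}\|L^J u\|_{L^2_f(\Hcal_s)}$. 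The crucial observation is that, because $L_a=x^a\del_t+t\del_a$ and $\del_{x^a}\sqrt{s^2+|x|^2}=x^a/t$, one has
\[
\del_{x^a}\bar u_s=\tfrac1t\,\overline{(L_a u)}_s,
\qquad
\del_{x^a}\del_{x^b}\bar u_s=\tfrac1{t^2}\,\overline{(L_aL_b u)}_s-\tfrac{x^a}{t^3}\,\overline{(L_b u)}_s,
\]
so each flat spatial derivative of $\bar u_s$ costs a factor $t^{-1}$ but trades itself for a Lorentz boost applied to $u$, up to lower-order terms which, since $|x|\le t$ on the support, are also controlled by $t^{-1}$ times lower-order boosts.

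Next I would localize around $x_*$ at scale $\rho:=t_*/4$ (any fixed fraction of $t_*$ works). Fixing a cutoff $\chi\in C_c^\infty(B(0,1))$ with $\chi\equiv1$ on $B(0,1/2)$, I set $v(y):=\chi\big((y-x_*)/\rho\big)\,\bar u_s(y)$, which is supported in $B(x_*,\rho)$ and satisfies $v(x_*)=\bar u_s(x_*)$. The rescaled Euclidean Sobolev inequality gives $\|v\|_{L^\infty(\RR^3)}\lesssim\sum_{|I|\le 2}\rho^{|I|-3/2}\|\del^I v\|_{L^2(\RR^3)}$, and expanding $\del^I v$ by the Leibniz rule — each derivative landing either on $\chi$ (a factor $\rho^{-1}$) or on $\bar u_s$ (a factor $t^{-1}$ times a boost, plus lower order, by the identity above) — yields the pointwise bound $|\del^I v|\lesssim\sum_{j\le|I|}\rho^{-(|I|-j)}t^{-j}\,|\overline{(L^j u)}_s|$ on $B(x_*,\rho)$. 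The one geometric input needed is that $t(y):=\sqrt{s^2+|y|^2}$ is comparable to $t_*$ uniformly for $y\in B(x_*,\rho)$ when $\rho=t_*/4$; this I would check by an elementary computation from $t_*^2=s^2+|x_*|^2$ (treating the regimes $|x_*|\lesssim s$ and $|x_*|\gtrsim s$, or with a single direct inequality). With $t\simeq t_*$ and $\rho\simeq t_*$ every term collapses, $\rho^{|I|-3/2}\rho^{-(|I|-j)}t_*^{-j}\simeq t_*^{-3/2}$, whence $\|v\|_{L^\infty}\lesssim t_*^{-3/2}\sum_{|J|\le 2}\|\overline{(L^J u)}_s\|_{L^2(B(x_*,\rho))}\le t_*^{-3/2}\sum_{|J|\le 2}\|L^J u\|_{L^2_f(\Hcal_s)}$, using that the flat measure on $\Hcal_s$ is exactly $dy$ (cf.\ \eqref{flat-int}). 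Then $|t_*^{3/2}u(t_*,x_*)|=|t_*^{3/2}v(x_*)|\lesssim\sum_{|J|\le2}\|L^J u\|_{L^2_f(\Hcal_s)}$, and taking the supremum over $(t_*,x_*)\in\Hcal_s$ proves \eqref{eq:Sobolev2}; since all comparisons used are scale-invariant, the constant is uniform in $s\ge 2$.

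The main obstacle is purely the bookkeeping in the last step: one must verify simultaneously that (i) $t(y)\simeq t_*$ holds on the ball of radius $t_*/4$, and (ii) all lower-order contributions — from $\del_{x^a}(1/t)$ and from derivatives hitting the cutoff — obey the same $t_*^{-3/2}$ bound, which hinges on $|x_*|\le t_*$ together with $\rho\simeq t_*$. Beyond this, the argument is just the standard Euclidean Sobolev embedding combined with a Leibniz expansion, so no genuinely new difficulty arises; this is the proof given in \cite{PLF-YM-book}, and we record it here only for completeness since \eqref{eq:Sobolev2} is used repeatedly in Section~\ref{sec:bootstrap}.
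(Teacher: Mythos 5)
Your proposal is correct and follows essentially the same route as the paper's proof: restrict $u$ to the hyperboloid, use $\del_{x^a}\bar u_s = t^{-1}(L_a u)|_{\Hcal_s}$ to trade flat derivatives for boosts, rescale to a ball of radius comparable to $t_*$, apply the Euclidean Sobolev embedding, and check $t\simeq t_*$ on that ball by splitting the cases $|x_*|\lesssim s$ and $|x_*|\gtrsim s$. The only cosmetic difference is your use of a smooth cutoff where the paper applies the local $H^2(B(0,1/3))\hookrightarrow L^\infty$ embedding directly; this changes nothing of substance.
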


\begin{proof} We revisit here the proof from  \cite{PLF-YM-book}. Consider the function $w_s(x) := u(\sqrt{s^2+|x|^2},x)$. 
Fix $s_0$ and a point $(t_0,x_0)$ in $\Hcal_{s_0}$ (with
$t_0 = \sqrt{s_0^2 + |x_0|^2}$), and observe that
\bel{eq:gh6007}
\del_aw_{s_0}(x) = \underline{\del}_au\big(\sqrt{s_0^2+|x|^2},x\big) = \underline{\del}_au(t,x),
\ee
with $t = \sqrt{s_0^2 + |x|^2}$ and
$
t\del_aw_{s_0}(x) = t\underline{\del}_au\big(\sqrt{s_0^2+|x|^2},x\big) = L_au(t,x).
$
Then, introduce $g_{s_0,t_0}(y) := w_{s_0}(x_0 + t_0\,y)$ and write
$$
g_{s_0,t_0}(0) = w_{s_0}(x_0) = u\big(\sqrt{s_0^2+|x_0|^2},x_0\big)=u(t_0,x_0).
$$
From the standard Sobolev inequality applied to the function $g_{s_0,t_0}$, we get 
$$
\big|g_{s_0,t_0}(0)\big|^2\leq C\sum_{|I|\leq 2}\int_{B(0,1/3)}|\del^Ig_{s_0,t_0}(y)|^2 \, dy,
$$
$B(0, 1/3) \subset \RR^3$ being the ball centered at the origin with radius $1/3$.

In view of (with $x = x_0 + t_0y$)
$$
\aligned
\del_ag_{s_0,t_0}(y)
& = t_0\del_aw_{s_0}(x_0 + t_0y) 
\\
& = t_0\del_aw_{s_0}(x) = t_0\underline{\del}_au\big(t,x),
\endaligned
$$
in view of \eqref{eq:gh6007}, we have (for all $I$)
$\del^Ig_{s_0,t_0}(y) = (t_0\underline{\del})^I u(t,x)$ and, therefore, 
$$
\aligned
\big|g_{s_0,t_0}(0)\big|^2 \leq& C\sum_{|I|\leq 2}\int_{B(0,1/3)}\big|(t_0\underline{\del})^I u\big(t,x)\big)\big|^2dy
\\
= & C t_0^{-3}\sum_{|I|\leq 2}\int_{B((t_0,x_0),t_0/3)\cap \Hcal_{s_0}}\big|(t_0\underline{\del})^I u\big(t,x)\big)\big|^2dx.
\endaligned
$$

Note that
$$
\aligned
(t_0\underline{\del}_a(t_0\underline{\del}_b w_{s_0}))
& = t_0^2\underline{\del}_a\underline{\del}_bw_{s_0}
\\
& = (t_0/t)^2(t\underline{\del}_a)(t\underline{\del}_b) w_{s_0} - (t_0/t)^2 (x^a/t)L_b w_{s_0}
\endaligned
$$
and $x^a/t = x^a_0/t + yt_0/t = (x^a_0/t_0 + y)(t_0/t)$. In the region $y\in B(0,1/3)$,
the factor $|x^a / t|$ can always be bounded by 1, and thus (for $|I|\leq 2$)
    $$
    | (t \underline{\partial})^I u| \leq \sum_{|J| \leq |I|} |L^J u| (t_0
/ t)^{|I|}.
    $$

In the region $|x_0|\leq t_0/2$, we have $t_0\leq \frac{2}{\sqrt{3}}s_0$ so 
$$
t_0 \leq C s_0 \leq C \sqrt{|x|^2 + s_0^2} = Ct
$$
for some $C>0$.  When $|x_0|\geq t_0/2$, in the region $B((t_0,x_0),t_0/3)\cap \Hcal_{s_0}$ we get
$t_0 \leq C|x| \leq C\sqrt{|x|^2 + s_0^2} =Ct$ and thus 
$$
|(t_0 \underline{\del})^I u| \leq C \, \sum_{|J| \leq |I|} | L^J u|
$$
and
$$
\aligned
\big|g_{s_0,t_0}(y_0)\big|^2
\leq& Ct_0^{-3}\sum_{|I|\leq 2}\int_{B(x_0,t_0/3)\cap\Hcal_{s_0}}\big|(t\underline{\del})^I u\big(t,x)\big)\big|^2 \, dx
\\
\leq& Ct_0^{-3}\sum_{|I|\leq 2}\int_{\Hcal_{s_0}}\big|L^I u(t,x)\big|^2 \, dx. 
\endaligned
$$ 
\end{proof}

We will also make use of the following Gronwall inequality.

\begin{lemma}[Gronwall-type inequality]
\label{lem:Gronwall}
Let $u(t)$ be a non-negative function that satisfies the integral inequality
\be 
u(t) 
\leq C + \int_{t_0}^t b(s) u(s)^{1/2} \, d s, 
\quad C\geq 0,
\ee
where $b(t)$ is non-negative function for $t \geq t_0$. Then it holds
\be 
u(t) 
\leq C + \left( \int_{t_0}^{t} b(s) \, d s \right)^2.
\ee
\end{lemma}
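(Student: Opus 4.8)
The plan is to run the classical nonlinear Gronwall argument based on a majorant function. First I would introduce
\[
v(t) := C + \int_{t_0}^t b(s)\, u(s)^{1/2}\, ds ,
\]
so that the hypothesis reads $u(t) \le v(t)$ for all $t \ge t_0$, while $v(t_0) = C$ and $v$ is non-decreasing since $b \ge 0$ and $u \ge 0$. As $v$ is absolutely continuous with $v'(t) = b(t)\, u(t)^{1/2}$ for almost every $t$, the pointwise bound $u \le v$ upgrades this to the closed differential inequality
\[
v'(t) \le b(t)\, v(t)^{1/2}, \qquad v(t_0) = C .
\]

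Next I would integrate this inequality. On the set where $v > 0$ one has $\frac{d}{dt}\bigl( 2\, v(t)^{1/2} \bigr) = v'(t)\, v(t)^{-1/2} \le b(t)$; integrating from $t_0$ to $t$ and using $v(t_0) = C$ gives $v(t)^{1/2} \le C^{1/2} + \tfrac12 \int_{t_0}^t b(s)\, ds$, so that
\[
u(t) \le v(t) \le \Bigl( C^{1/2} + \tfrac12 \int_{t_0}^t b(s)\, ds \Bigr)^{2} .
\]
Squaring out the right-hand side and an elementary estimate of the resulting cross term then yield the stated bound $u(t) \le C + \bigl( \int_{t_0}^t b(s)\, ds \bigr)^2$.

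The only point requiring care — and hence the main (admittedly modest) obstacle — is the possible vanishing of $v$, which happens precisely when $C = 0$ and $b$ vanishes near $t_0$; there one cannot divide by $v^{1/2}$. The standard remedy is to work instead with the regularized majorant $v_\eta(t) := \eta + \int_{t_0}^t b(s)\, u(s)^{1/2}\, ds$ for $\eta > 0$: one still has $u \le v \le v_\eta$ and $v_\eta' \le b\, v_\eta^{1/2}$ with $v_\eta(t_0) = \eta > 0$, so $v_\eta$ stays strictly positive, the computation of the previous paragraph applies verbatim, and letting $\eta \to 0^+$ recovers the desired inequality (in fact with the improved constant $\tfrac14$ on the squared term in the case $C = 0$). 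One should also record that the absolute continuity of $v$ needed to apply the fundamental theorem of calculus is immediate from the integrability of $b\, u^{1/2}$ that is implicit in the very meaning of the integral hypothesis.
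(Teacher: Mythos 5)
The paper states Lemma \ref{lem:Gronwall} without proof, so there is nothing to compare your argument against; I assess it on its own. The core of your proof is correct and is the standard Bihari/Ou-Iang argument: the majorant $v$, the closed differential inequality $v'\le b\,v^{1/2}$, integration of $\frac{d}{dt}(2v^{1/2})\le b$, and the $\eta$-regularization to handle the degenerate case $C=0$ are all fine, and they yield the sharp conclusion $u(t)\le\bigl(C^{1/2}+\tfrac12\int_{t_0}^t b(s)\,ds\bigr)^2$, equivalently $u(t)^{1/2}\le C^{1/2}+\tfrac12\int_{t_0}^t b(s)\,ds$.

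The gap is the final sentence of your second paragraph. Writing $B=\int_{t_0}^t b(s)\,ds$, you have $\bigl(C^{1/2}+\tfrac12 B\bigr)^2=C+C^{1/2}B+\tfrac14 B^2$, and no elementary estimate bounds the cross term $C^{1/2}B$ by $\tfrac34 B^2$: that would require $B\ge\tfrac43 C^{1/2}$. Indeed, the inequality as printed in the lemma is false with these exact constants: take $C=1$, $t_0=0$, $b\equiv 1$ and $u(t)=(1+t/2)^2$; then the hypothesis holds with equality, yet $u(t)=1+t+t^2/4>1+t^2$ for $0<t<4/3$. What your argument legitimately delivers is either the sharp form above, or, via Young's inequality $C^{1/2}B\le\tfrac12 C+\tfrac12 B^2$, the bound $u(t)\le\tfrac32 C+\tfrac34 B^2\le\tfrac32\bigl(C+B^2\bigr)$, which is all that is ever needed when such a lemma is applied up to implied constants (as in this paper). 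You should therefore stop at the sharp form, or state the conclusion with the harmless multiplicative constant, rather than claim the printed inequality verbatim. (Your observation that the $C=0$ case comes with the improved factor $\tfrac14$ is correct; it is precisely when $C>0$ and $B$ is small that the printed bound fails.)
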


\subsection{Bootstrap assumptions and basic estimates}
 
By the local well-posedness of semilinear PDEs, there exists an $s_1 > s_0$ in which the following bootstrap assumptions hold for all $s\in [s_0, s_1]$
\bel{eq:bootsassumption0}
\aligned
\Ecal_{m_q} \big(s, \del^I L^J A_\nu \big)^{1/2}
+
\Ecal_{m_\lambda} \big(s, \del^I L^J \chi \big)^{1/2}
&\leq C_1 \eps, &|I|+|J|\leq N,
\\
\Ecal_{m_g}(s, \del^I L^J \psi)^{1/2}
&\leq C_1 \eps,   &|I|+|J|\leq N-1,
\\
\Ecal_{m_g}(s, \del^I L^J \psi)^{1/2}
&\leq C_1 \eps \log s,  &|I|+|J|=N.
\endaligned
\ee
If we can prove the refined estimates
\bel{eq:bootrefined}
\aligned
\Ecal_{m_q} \big(s, \del^I L^J A_\nu \big)^{1/2}
+
\Ecal_{m_\lambda} \big(s, \del^I L^J \chi \big)^{1/2}
&\leq {1\over 2} C_1 \eps, &|I|+|J|\leq N,
\\
\Ecal_{m_g}(s, \del^I L^J \psi)^{1/2}
&\leq {1\over 2} C_1 \eps,   &|I|+|J|\leq N-1,
\\
\Ecal_{m_g}(s, \del^I L^J \psi)^{1/2}
&\leq {1\over 2} C_1 \eps \log s,  &|I|+|J|=N,
\endaligned
\ee
then we are able to assert that $s_1$ cannot be finite, which in turn implies a global existence result for \eqref{eq:intro-u1-pde-modified-lorenz}.

Combining the bootstrap assumptions \eqref{eq:bootsassumption0} with the estimates for commutators in Lemma \ref{lem:est-comm}, the following sets of estimates are obtained:
\bel{eq:directL2}
\aligned
\big\|(s/t) \del^I L^J \del_\mu (A_\nu,  \chi) \big\|_{L^2_f(\Hcal_s)} + \big\| (s/t) \del_\mu \del^I L^J (A_\nu, \chi) \big\|_{L^2_f(\Hcal_s)}
&\leq 
	C_1 \eps, &|I|+|J|\leq N.
\\
\big\|\del^I L^J (A_\nu, \chi) \big\|_{L^2_f(\Hcal_s)} 
&\leq 
	C_1 \eps, &|I|+|J|\leq N,
\\	
m_g \big\| \del^I L^J \psi \big\|_{L^2_f(\Hcal_s)}
+ \big\| (s/t) \del_\mu \del^I L^J \psi \big\|_{L^2_f(\Hcal_s)}
&\leq 
	C_1 \eps, &|I|+|J|\leq N-1,
\\
m_g \big\| \del^I L^J \psi \big\|_{L^2_f(\Hcal_s)}
+ \big\| (s/t) \del_\mu \del^I L^J \psi \big\|_{L^2_f(\Hcal_s)}
&\leq 
	C_1 \eps \log s, &|I|+|J|=N.
\endaligned
\ee

Combining these estimates with Proposition \ref{prop:Sobolev-type-inequality2} and Proposition \ref{prop:energy-decop} the following hold:
\bel{eq:directLinf}
\aligned
\sup_{(t,x) \in \Hcal_s} 
	\Big(t^{1/2} s \big|\del_\alpha \del^I L^J (A_\nu, \chi) \big| + t^{1/2} s \big| \del^I L^J \del_\alpha  (A_\nu, \chi) \big| \Big) 
& \lesssim
	C_1 \eps, \quad |I| + |J| \leq N-2,
\\
\sup_{(t,x) \in \Hcal_s} \Big(t^{3/2} \big|\del^I L^J A_\nu, \del^I L^J \chi \big| \Big)
& \lesssim
	C_1 \eps, \quad |I| + |J| \leq N-2,
\\
\sup_{(t,x) \in \Hcal_s} \Big( m_g t^{3/2} \big| \del^I L^J \psi \big|
+ t^{1/2} s \big|\del_\alpha \del^I L^J \psi, \del^I L^J \del_\alpha \psi \big| \Big)
& \lesssim
	C_1 \eps, \quad |I| + |J| \leq N-3,
\\
\sup_{(t,x) \in \Hcal_s} \Big( m_g t^{3/2} \big| \del^I L^J \psi \big|
+ t^{1/2} s \big|\del_\alpha \del^I L^J \psi, \del^I L^J \del_\alpha \psi \big| \Big)
& \lesssim
	C_1 \eps \log s, \quad |I| + |J| = N-2.
\endaligned
\ee


\subsection{First-order energy estimate for the Dirac field} \label{subsec:dirac909}
To obtain decay estimates for the Dirac component $\psi$, a standard method is to analyse the second-order form of the Dirac equation \eqref{eq:psi-second-order}. This is then a semilinear Klein-Gordon equation with mass $m_g^2$ and so there are now standard techniques to estimate the nonlinearity; see for example \cite{Alinhac} and \cite{PLF-YM-cmp}. However, the right-hand side term appearing in our wave equation \eqref{eq:psi-second-order} does not decay sufficiently fast for this argument to close, which is due to the possibly vanishing mass $m_g^2 \geq 0$. Thus at this point we recall Proposition \ref{eq:positive-energy-1storder} and the lower bound \eqref{eq:lowbound1} for the energy $E^\Hcal$. This motivates us to analyse the first-order form of the Dirac equation in the following Theorem to obtain certain improved $L^2$ and $L^\infty$ estimates for $\psi$ that are uniform in $m_g$. 

\begin{theorem} 
\label{thm:sup-Dirac1}
Under the same assumptions as Theorem \ref{thm:full-U(1)}, the Dirac field $\psi$ satisfies
\begin{align}
\big\| (s/t) \Lh^J \psi \big\|_{L^2_f(\Hcal_s)} 
& \lesssim
	\eps + (C_1 \eps)^2, \quad |J| \leq N, \label{eq:L2Dirac}
\\
\sup_{\Hcal_s} \big| t^{1/2} s \Lh^J \psi \big| 
& \lesssim 
	\eps + (C_1 \eps)^2, \quad |J|\leq N-2. 
\label{eq:supDirac}
\end{align} 
As a consequence, one has the following sup-norm estimate for $\psi$: 
\bel{eq:sup-Dirac}
\sup_{\Hcal_s} \big|t \del^I \Lh^J \psi \big| 
\lesssim \eps + (C_1 \eps)^2,
\qquad
| I | + |J| \leq N-2.
\ee
\end{theorem}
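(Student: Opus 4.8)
The plan is to commute the modified Lorentz boosts $\Lh_a = L_a + \frac{1}{2}\gamma_0\gamma_a$ through the first-order Dirac equation, run the hyperboloidal energy estimate of Proposition~\ref{prop-hyperboloidal-energy-inequality-Dirac} on each $\Lh^J\psi$, and close the resulting hierarchy by a Gronwall argument; the pointwise bounds then follow by Sobolev embedding on hyperboloids and a direct inspection of the equation.

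\textbf{Step 1 (setup).} Since $[\Lh_a, i\gamma^\nu\del_\nu] = 0$ and $\Lh_a(m_g\psi) = m_g\Lh_a\psi$, applying $\Lh^J$ to $i\gamma^\mu\del_\mu\psi - m_g\psi = Q_\psi$ shows that $\Lh^J\psi$ solves the same first-order system with source $\Lh^J Q_\psi$ (no $\del$-derivatives are produced, only lower-order modified boosts with constant matrix coefficients). Proposition~\ref{prop-hyperboloidal-energy-inequality-Dirac} then gives, for $|J|\le N$,
\[
E^\Hcal(s,\Lh^J\psi)^{1/2} \le E^\Hcal(s_0,\Lh^J\psi)^{1/2} + \int_{s_0}^{s} \|\Lh^J Q_\psi\|_{L^2_f(\Hcal_{\bar s})}\,d\bar s,
\]
and the data term is $\lesssim\eps$ because on the compact initial hyperboloid the boosts are controlled by ordinary derivatives and $E^\Hcal(s_0,\cdot)\lesssim\|\cdot\|_{L^2}^2$ by definition.

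\textbf{Step 2 (the nonlinearity: the main obstacle).} It remains to show $\int_{s_0}^{s}\|\Lh^J Q_\psi\|_{L^2_f(\Hcal_{\bar s})}\,d\bar s\lesssim(C_1\eps)^2 + C_1\eps\int_{s_0}^{s}\bar s^{-1-\delta}\,\mathcal M(\bar s)\,d\bar s$ for some $\delta>0$, where $\mathcal M(s):=\max_{|K|\le N}E^\Hcal(s,\Lh^K\psi)^{1/2}$. Recall $Q_\psi = g(\phi_0^*\chi + \chi^*\phi_0 + \chi^*\chi)\psi - q\gamma^\mu A_\mu\psi$; substituting $A = \tildeA - (q/m_q^2)\psi^*\gamma^0\gamma^\mu\psi$ replaces the bad factor by the genuinely massive field $\tildeA$ plus a cubic $\psi$-term. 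Expand $\Lh^J Q_\psi$ by Leibniz, putting ordinary boosts on $\chi,\tildeA$ and modified boosts on $\psi$. The Higgs--Dirac terms carry the coefficient $g = m_g/v^2$; writing $g\,\chi\,\psi = v^{-2}\chi\,(m_g\psi)$ and using the uniform-in-$m_g$ decay of $m_g\psi$ from Proposition~\ref{prop:energy-decop} (available precisely because $m_g$ does not enter $E^\Hcal$), these decay like $\bar s^{-3/2}$ in $L^2_f(\Hcal_{\bar s})$ even when every boost lands on $\chi$. For the $\tildeA\psi$ term and the cubic term, split by order: a factor $\chi,\tildeA$ or $\psi$ carrying $\le N-2$ derivatives is controlled pointwise, with $t^{-3/2}$ decay for $\chi,\tildeA$ (weak estimates~\eqref{eq:directLinf}) or via~\eqref{eq:decay-part1} for $\psi$, while the complementary high-order factor is kept in $L^2_f$ using~\eqref{eq:directL2}. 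The weight $(s/t)$ needed to invoke the positivity $\|(s/t)\Lh^K\psi\|_{L^2_f}\le E^\Hcal(s,\Lh^K\psi)^{1/2}$ comes for free from $t^{-3/2}\le(s/t)\,s^{-3/2}$ on $\mathrm{supp}\,\psi$. The decisive structural fact --- and the reason the gain is $(C_1\eps)^2$ rather than the uncloseable $C_1^2$ --- is that the Dirac source contains \emph{no} $\psi$--$\psi$ self-interaction: each monomial pairs a decaying Klein--Gordon/Proca factor with (a derivative of) $\psi$. Making all of this work \emph{uniformly in} $m_g\in[0,\min(m_q,m_\lambda)]$ is the crux of the argument.

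\textbf{Step 3 (conclusion).} Maximising over $|J|\le N$ yields $\mathcal M(s)\lesssim\eps + (C_1\eps)^2 + C_1\eps\int_{s_0}^{s}\bar s^{-1-\delta}\mathcal M(\bar s)\,d\bar s$, so, since $\int_{s_0}^{\infty}\bar s^{-1-\delta}d\bar s<\infty$, Gronwall (Lemma~\ref{lem:Gronwall}, or its linear version) gives $\mathcal M(s)\lesssim\eps + (C_1\eps)^2$; together with the positivity~\eqref{eq:lowbound1} this is~\eqref{eq:L2Dirac}. For~\eqref{eq:supDirac}, apply the Sobolev inequality for modified boosts (Corollary~\ref{cor:Sobolev-type-inequality3}) to $(s/t)\Lh^J\psi$ with $|J|\le N-2$, so that $\sup_{\Hcal_s}|t^{1/2}s\,\Lh^J\psi| = \sup_{\Hcal_s}|t^{3/2}(s/t)\Lh^J\psi|\lesssim\sum_{|K|\le N}E^\Hcal(s,\Lh^K\psi)^{1/2}\lesssim\eps + (C_1\eps)^2$. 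Finally~\eqref{eq:sup-Dirac} follows from the Dirac equation in the semi-hyperboloidal frame, $i\big(\gamma^0 - (x_a/t)\gamma^a\big)\del_t\psi = -i\gamma^a\underline{\del}_a\psi + m_g\psi + Q_\psi$: the matrix $\gamma^0 - (x_a/t)\gamma^a$ is invertible in $\mathcal K$ with inverse of size $O((t/s)^2)$, so $|t\,\del_t\psi|$ is bounded using~\eqref{eq:supDirac} (for $\underline{\del}_a\psi = L_a\psi/t$), Proposition~\ref{prop:energy-decop} (for $m_g\psi$), and the weak estimates (for $Q_\psi$); then $|t\,\del_a\psi|\le|L_a\psi| + t|\del_t\psi|$, and iterating over $\del^I$ with $|I|\ge1$ finishes the proof.
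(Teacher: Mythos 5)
Your overall skeleton (commuting the modified boosts $\Lh_a$ through the first-order equation, running the hyperboloidal energy estimate of Proposition \ref{prop-hyperboloidal-energy-inequality-Dirac} on $\Lh^J\psi$, and concluding via the Sobolev inequality for modified boosts) is the same as the paper's. But there is a genuine gap at exactly the point you yourself flag as the crux: the top-order terms of $\Lh^J Q_\psi$ in which \emph{all} boosts land on the bosonic factor. For $|J|=N$ the Leibniz expansion of $\Lh^J(\gamma^\mu A_\mu\psi)$ contains $(L^J A_\mu)\gamma^\mu\psi$; the only available top-order control of $L^J A$ is the unweighted bound $\|L^J A\|_{L^2_f(\Hcal_{\bar s})}\lesssim C_1\eps$, with no decay in $\bar s$, so the undifferentiated $\psi$ must be placed in $L^\infty$ with an integrable rate. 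You propose to control it ``via \eqref{eq:decay-part1}'', i.e. $|\psi|\lesssim \eps_1 t^{1/2}s^{-1}$; but $\sup_{\Hcal_s} t^{1/2}s^{-1}\simeq 1$ (since $t$ reaches $\sim s^2$ near the light cone), so this term contributes $\int_{s_0}^{s}(C_1\eps)^2\,d\bar s$, which grows linearly and defeats both the claimed form of your Gronwall inequality and the conclusion. The factor $m_g$ you extract for the Higgs term by writing $g\chi\psi=v^{-2}\chi\,(m_g\psi)$ and invoking Proposition \ref{prop:energy-decop} is unavailable here, since the coefficient of $\gamma^\mu A_\mu\psi$ is $q$, not $g$; and the substitution $A=\tildeA-(q/m_q^2)\psi^*\gamma^0\gamma^\mu\psi$ does not help (it produces cubic terms whose two low-order $\psi$ factors suffer from the same non-decaying sup bound).

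The paper closes precisely this loop by an induction on $|J|$ rather than a one-shot Gronwall. First, it observes that with $H:=g(\phi_0^*\chi+\chi^*\phi_0+\chi^*\chi)-q\gamma^\mu A_\mu$ one has $i\psi^*\gamma^0 H\psi-i(H\psi)^*\gamma^0\psi=0$, so the diagonal term $H\Lh^J\psi$ drops out of the energy identity entirely and only the commutator $R=\Lh^J(H\psi)-H\Lh^J\psi$ must be estimated; for $|J|\le N-2$ every factor $\Lh^{J_1}H$ there carries at most $N-2$ boosts and is controlled in $L^\infty$ with $t^{-3/2}$ decay by \eqref{eq:directLinf}. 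This yields the \emph{improved} pointwise bound $|\Lh^K\psi|\lesssim(\eps+(C_1\eps)^2)\,t^{-1/2}s^{-1}$ for $|K|\le N-4$, and only then, at orders $N-1$ and $N$, is the dangerous pairing estimated as $\|\Lh^{J_1}H\|_{L^2_f}\,\|\Lh^{J_2}\psi\|_{L^\infty}$ using this improved decay of the low-order Dirac factor. Your argument can be repaired either by adopting this two-stage structure, or by keeping the low-order $\psi$ factor inside the Gronwall functional through the weighted Sobolev bound $\sup_{\Hcal_s}|t^{1/2}s\,\Lh^{J_2}\psi|\lesssim\sum_{|K|\le |J_2|+2}\|(s/t)\Lh^K\psi\|_{L^2_f(\Hcal_s)}$ --- but not by the weak estimate \eqref{eq:decay-part1} as written.
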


\begin{proof} 
{\bf Step 1.}
Recall the equation in \eqref{eq:u101} for the Dirac field 
$$
\aligned
&
i \gamma^\mu \del_\mu \psi - m_g \psi 
=
H \psi,
\\
&
H
:=
g (\phi_0^* \chi + \chi^* \phi_0 + \chi^* \chi) - g \gamma^\mu A_\mu. 
\endaligned
$$
Since $i \psi^* \gamma^0 H - i H^* \gamma^0 \psi = 0$ we have the following conserved energy
\be 
E^\Hcal (s, \psi) 
= E^\Hcal (s_0, \psi),
\ee
then according to the inequality \eqref{eq:lowbound1} in Proposition~\ref{eq:positive-energy-1storder}, we are able to initialise the induction argument by
$$ 
\| (s/t) \psi \|_{L^2_f(\Hcal_s)} \lesssim \eps.
$$

{\bf Step 2.}
For induction purposes, assume
$$
\big\| (s/t) \Lh^J \psi \big\|_{L^2_f(\Hcal_s)} 
\lesssim
\eps + (C \eps)^2
$$	
holds for $0 \leq |J| \leq k - 1 \leq N - 3$, and now consider the case $1 \leq |J| = k \leq N-2$.
Act $\Lh^J$ on the Dirac equation above to obtain
$$ 
\gamma^\mu \del_\mu (\Lh^J \psi) + i m_g \Lh^J \psi
= 
- i H (\Lh^J \psi) - i R,
$$
with 
$$ 
R
:= 
\Lh^J (H \psi) - H (\Lh^J \psi),
\qquad
H
=
g (\phi_0^* \chi + \chi^* \phi_0 + \chi^* \chi) - g \gamma^\mu A_\mu.
$$
Observe that $R$ contains only terms, up to some constant matrices, of type
\be
\Lh^{J_1} H \cdot \Lh^{J_2} \psi, \quad |J_1| + |J_2| \leq |J|, \quad |J_2| \leq k - 1.
\ee
Using \eqref{eq:1st-EE} and consequently by Lemma \ref{prop:Sobolev-type-inequality2} and the induction assumption
\be 
\aligned
E^\Hcal(s, \Lh^J \psi)^{1/2} 
&\leq  
E^\Hcal(s_0, \Lh^J \psi)^{1/2} + \int_{s_0}^s \| R \|_{L^2_f(\Hcal_{\bar{s}})} d \bar{s}
\\
& \lesssim 
\eps + \int_{s_0}^s \sum_{\substack{|J_1|+|J_2| \leq |J| \\ |J_2|\leq |J|-1}} \big\| (t/s) \Lh^{J_1} H \big\|_{L^\infty(\Hcal_{\bar{s}})} \big\|  (s/t) \Lh^{J_2} \psi \big\|_{L^2_f(\Hcal_{\bar{s}})} \Big) \, d \bar{s}
\\
& \lesssim \eps + (C_1 \eps)^2 \int_{s_0}^s \bar{s}^{-3/2} \, d \bar{s},
\endaligned
\ee
which gives
$$
\big\| (s/t) \Lh^J \psi \big\|_{L^2_f(\Hcal_s)} 
\lesssim \eps, 
\quad |J| \leq k.
$$

{\bf Step 3.}
The above analysis shows for $|J| \leq N-2$
$$ 
\sum_{|J'| \leq 2} \big\| (s/t) \Lh^{J'} \Lh^J \psi \big\|_{L^2_f(\Hcal_s)} 
\lesssim 
\eps + (C_1 \eps)^2.
$$
Thus by the Sobolev inequality \eqref{eq:Sobolev2}, we deduce
$$ 
\sup_{\Hcal_s} \big|t^{1/2} s \Lh^J \psi \big| 
\lesssim
\epsilon + (C_1 \eps)^2, 
\quad |J|\leq N-4.
$$

{\bf Step 4.}
We now consider the case $|J|=N-1$. An energy estimate yields
\be 
\aligned
E^\Hcal(s, \Lh^J \psi)^{1/2}
&\leq 
\eps + \int_{s_0}^s \Big( \sum_{\substack{|J_1|+|J_2| \leq N-1,\\ |J_1|> |J_2|, |J_2|\leq N-4}} \big\| \Lh^{J_1} H \big\|_{L^2_f(\Hcal_{\bar{s}})} \big\| \Lh^{J_2} \psi \big\|_{L^\infty(\Hcal_{\bar{s}})} 
 \\
& \quad + \sum_{\substack{|J_1|+|J_2| \leq N-1,\\ |J_1|\leq |J_2|, |J_2|\leq N-2}} \big\| (t/s) \Lh^{J_1} H \big\|_{L^\infty(\Hcal_{\bar{s}})} \big\| (s/t) \Lh^{J_2} \psi \big\|_{L^2_f(\Hcal_{\bar{s}})} \Big) \, d \bar{s}
\\
&\leq 
\eps + (C_1 \eps)^2 \int_{s_0}^s \bar{s}^{-3/2} \, d \bar{s},
\endaligned
\ee
which implies
\be 
\big\| (\bar{s}/t) \Lh^J \psi \big\|_{L^2_f(\Hcal_{\bar{s}})} 
\lesssim 
\eps + (C_1 \eps)^2.
\ee
The same analysis also applies to the case $|J|=N$. And repeating Step 3 gives \eqref{eq:supDirac} for $|J|\leq N-2$.
\end{proof}

As a consequence, we have the following sup-norm estimates for $\psi$.

\begin{corollary} \label{corol:sup-Dirac}
It holds that
\bel{eq:sup-Dirac-mm}
\sup_{\Hcal_s} \big| t^{1/2} s \del^I L^J \psi \big| 
\lesssim \eps + (C_1 \eps)^2,
\qquad
| I | + |J| \leq N-2.
\ee
\end{corollary}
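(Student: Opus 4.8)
The plan is to transfer the sharp $\psi$--estimates of Theorem~\ref{thm:sup-Dirac1}, which are expressed through the modified boosts $\Lh_a=L_a+\tfrac12\gamma_0\gamma_a$ of \eqref{eq:modified-lorentz-boosts}, to the ordinary boosts $L_a$. The first step is purely algebraic. Since $\Lh_a-L_a=\tfrac12\gamma_0\gamma_a$ is a constant matrix and, for any product $M$ of Dirac matrices, $[\Lh_a,M]=[L_a,M]+\tfrac12[\gamma_0\gamma_a,M]=\tfrac12[\gamma_0\gamma_a,M]$ is again a constant matrix, expanding $L^J=(\Lh_{a_1}-\tfrac12\gamma_0\gamma_{a_1})\cdots(\Lh_{a_k}-\tfrac12\gamma_0\gamma_{a_k})$ and moving all matrix factors to the left yields $L^J\psi=\sum_{|J'|\le|J|}M_{J'}\,\Lh^{J'}\psi$ with constant $4\times4$ matrices $M_{J'}$; applying $\del^I$, which commutes with constant matrices, gives $\del^I L^J\psi=\sum_{|J'|\le|J|}M_{J'}\,\del^I\Lh^{J'}\psi$. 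It therefore suffices to bound $\sup_{\Hcal_s}\big|t^{1/2}s\,\del^I\Lh^{J'}\psi\big|$ for $|I|+|J'|\le N-2$.

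When $|I|=0$ this is exactly \eqref{eq:supDirac}. When $|I|\ge1$ the bound \eqref{eq:sup-Dirac} only supplies the weaker weight $t$: indeed, on the cone $\mathcal{K}$ the condition $r<t-1$ forces $s^2=t^2-r^2>t$, so $t<t^{1/2}s$ there. To recover the full weight I would split one Cartesian derivative in the semi-hyperboloidal frame, $\del_a=\underline{\del}_a-(x_a/t)\del_t$ with $\underline{\del}_a=t^{-1}L_a$. The tangential contribution is harmless, since $t^{1/2}s\,|\underline{\del}_a\Lh^{J'}\psi|=t^{-1/2}s\,|L_a\Lh^{J'}\psi|\lesssim t^{-1/2}s\,(t^{1/2}s)^{-1}\big(\eps+(C_1\eps)^2\big)=t^{-1}\big(\eps+(C_1\eps)^2\big)$, using \eqref{eq:supDirac} on $L_a\Lh^{J'}\psi$ after re-expanding $L_a$ in the modified boosts as above. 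The time derivative $\del_t\Lh^{J'}\psi$ is the crux. Since $[\Lh_a,i\gamma^\nu\del_\nu]=0$, the function $u=\Lh^{J'}\psi$ solves a Dirac equation whose source is schematically $\Lh^{J_1}(H)\,\Lh^{J_2}\psi$ (with $H$ the Dirac-equation potential appearing in \eqref{eq:u101}) plus lower-order commutators, and this source is already controlled; rewriting that equation in the semi-hyperboloidal frame gives $-i\big(\gamma^0-(x_a/t)\gamma^a\big)\del_t u-i\gamma^a\underline{\del}_a u+m_g u=(\text{source})$, from which $\del_t u$ is recovered by inverting the matrix $\gamma^0-(x_a/t)\gamma^a$. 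This inversion costs a factor $(t/s)^2$ and expresses $\del_t u$ through $m_g u$, the nonlinearity, and the tangential derivatives already handled; the delicate term $m_g u$ is absorbed using the uniform-in-$m_g$ bound $m_g\,|u|\lesssim\big(\eps+(C_1\eps)^2\big)t^{-3/2}$ furnished by Proposition~\ref{prop:energy-decop} (itself fed by the first-order energy estimate of Proposition~\ref{prop-hyperboloidal-energy-inequality-Dirac} and \eqref{eq:L2Dirac}), while the inequality $s^2>t$ on $\mathcal{K}$ is invoked once more to swallow the weights $(t/s)^2$ and $t^{1/2}s$.

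The heart of the matter is thus the control of $\del_t\psi$ and its boosted versions with the sharp weight $t^{1/2}s$: the Dirac equation only fixes the degenerate combination $\big(\gamma^0-(x_a/t)\gamma^a\big)\del_t\psi$, and solving for $\del_t\psi$ itself loses the power $(s/t)^2$, while $m_g\psi$ carries no a priori gain in the physically relevant range $m_g\sim\min(m_q,m_\lambda)$. Both obstructions are removed by combining the improved, mass-independent estimates for $\psi$ from Theorem~\ref{thm:sup-Dirac1} with the mass-weighted decay of Proposition~\ref{prop:energy-decop}; once these are in hand, \eqref{eq:sup-Dirac-mm} follows by routine manipulations with the commutator estimates of Lemma~\ref{lem:est-comm} and the Sobolev-type inequality on hyperboloids recalled in Section~\ref{subsec:nullcom}.
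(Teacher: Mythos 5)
Your algebraic reduction of $L^J$ to the modified boosts $\Lh^{J'}$ via constant matrices is correct, and the case $|I|=0$ does follow at once from \eqref{eq:supDirac}. The gap is in your treatment of $\del_t$ when $|I|\geq 1$. Inverting $\gamma^0-(x_a/t)\gamma^a$ costs the factor $(t/s)^2$ on \emph{every} term of the right-hand side, including $m_g u$. Proposition \ref{prop:energy-decop} gives $m_g|u|\lesssim \big(\eps+(C_1\eps)^2\big)t^{-3/2}$ and nothing better, so this contribution to $t^{1/2}s\,|\del_t u|$ is of size
$t^{1/2}s\cdot (t/s)^2\cdot t^{-3/2}=t/s$,
which is \emph{not} bounded on $\Hcal_s$: near the boundary of the cone $s^2\sim t$, so $t/s\sim s\to\infty$. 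The inequality $s^2>t$ only yields $t/s<s$; it does not ``swallow'' the weights as you assert. (Replacing $m_gu$ by the energy-controlled combination $(\gamma^0-(x_a/t)\gamma^a)u$ does not help, since that quantity also decays only like $t^{-3/2}$.) So your argument does not close for $m_g>0$; the term-by-term estimate after the matrix inversion loses exactly the cancellation that makes $\del_t\psi$ of size $m_g t^{-3/2}$ rather than $(t/s)^2 t^{-3/2}$.

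The detour through the equation is also unnecessary, which is how the gap is best repaired. Both $\del_\alpha$ and $\Lh_a$ commute exactly with $i\gamma^\mu\del_\mu$, so Steps 1--4 of the proof of Theorem \ref{thm:sup-Dirac1} apply verbatim with $\del^I\Lh^J\psi$ in place of $\Lh^J\psi$ and give $E^\Hcal(s,\del^I\Lh^J\psi)^{1/2}\lesssim \eps+(C_1\eps)^2$ for $|I|+|J|\leq N$ (the commutator $R$ now contains terms $\del^{I_1}\Lh^{J_1}H\cdot \del^{I_2}\Lh^{J_2}\psi$, handled identically). By the positivity \eqref{eq:lowbound1} this controls $\|(s/t)\,\del^I\Lh^J\psi\|_{L^2_f(\Hcal_s)}$, and applying the Sobolev inequality \eqref{eq:Sobolev2} to the function $(s/t)\,\del^I\Lh^J\psi$ (using $L_a(s/t)=-(x^a/t)(s/t)$, in the spirit of \eqref{eq:est-cmt6}) yields $t^{3/2}(s/t)\,|\del^I\Lh^J\psi|=t^{1/2}s\,|\del^I\Lh^J\psi|\lesssim \eps+(C_1\eps)^2$ for $|I|+|J|\leq N-2$. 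Your constant-matrix conversion then gives \eqref{eq:sup-Dirac-mm} directly; the bound \eqref{eq:sup-Dirac} with the weaker weight $t$ is just this estimate combined with $t\leq t^{1/2}s$ on $\mathcal{K}$. In short: the sharp weight comes from the $(s/t)$-weighted component of the first-order energy applied to $\del^I\Lh^J\psi$, not from solving the Dirac equation for $\del_t\psi$.
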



\subsection{Refined estimates} \label{subsec:refined909}

In this final subsection we close our bootstrap argument. For this to work we move to the transformed vector field $\tildeA^\nu$ defined in \eqref{eq:def-B-vec} and the transformed scalar field $\tildechi_+$ defined in \eqref{eq:def-tildechi}, which are heuristically of the form
$$ 
A^\nu 
= 
\tildeA^\nu + \Ocal(|\psi|^2),
\qquad
\chi_+
=
\tildechi_+ + \Ocal(|\psi|^2).
$$
Thus using the estimates for $A^\nu$ and $\chi_+$ coming from \eqref{eq:directL2} and \eqref{eq:directLinf}, together with the previous energy and sup-norm estimates for $\psi$, the following estimates for $\tildeA^\nu$ and $\tildechi_+$ hold
\bel{eq:est-B}
\aligned
	\big\|(s/t) \del^I L^J \del_\mu (\tildeA_\nu, \tildechi_+) \big\|_{L^2_f(\Hcal_s)} 
	+ \big\| (s/t) \del_\mu \del^I L^J (\tildeA_\nu, \tildechi_+) \big\|_{L^2_f(\Hcal_s)}
&\lesssim 
	C_1 \eps, &|I|+|J|\leq N,
\\
\big\|\del^I L^J (\tildeA_\nu, \tildechi_+) \big\|_{L^2_f(\Hcal_s)}
&\lesssim 
	C_1 \eps, &|I|+|J|\leq N,
\\
\sup_{\Hcal_s} \Big( t^{1/2} s \big| \del_\alpha \del^I L^J (\tildeA_\nu, \tildechi_+) \big| + t^{1/2} s \big|\del^I L^J \del_\alpha (\tildeA_\nu, \tildechi_+) \big| \Big)
&\lesssim 
	C_1 \eps, &|I|+|J|\leq N-2,
\\
\sup_{\Hcal_s} \Big(t^{3/2} \big| \del^I L^J (\tildeA_\nu, \tildechi_+) \big| \Big)
&\lesssim 
	C_1 \eps, &|I|+|J|\leq N-2.
\endaligned
\ee 
We first look at the energy for $\psi$ in the case $|I|+|J|=N$
$$ 
\mE_{m_g} (s, \del^I L^J \psi)^{1/2} 
\leq \eps + C \sum_{\nu, \mu} \int_{s_0}^s \big( \big\| \del^I L^J ((\del_\mu H )\psi) \big\|_{L^2_f(\Hcal_{\bar{s}})}  + \big\| \del^I L^J ( H \del_\mu \psi) \big\|_{L^2_f(\Hcal_{\bar{s}})}  \big) \, d \bar{s}.
$$
By noting
\be
\aligned
&\quad
\sum_{\mu}  \| \del^I L^J (\del_\mu H \psi) \|_{L^2_f(\Hcal_{\bar{s}})}
\\
&\leq 
	\sum_{\substack{I_1+I_2=I, J_1+J_2=J\\ |I_1|+|J_1|\leq N-2, \, \mu}} \big\|(t/s) \del^{I_1} L^{J_1} \del_\mu H \big\|_{L^\infty(\Hcal_{\bar{s}})} \big\|(s/t) \del^{I_2}L^{J_2} \psi \big\|_{L^2_f(\Hcal_{\bar{s}})} 
\\
& \quad
	+ \sum_{\substack{I_1+I_2=I, J_1+J_2=J\\ |I_1|+|J_1|\geq N-1, \, \mu}} \big\|(s/t) \del^{I_1} L^{J_1} \del_\mu H \big\|_{L^2_f(\Hcal_{\bar{s}})} \big\|(t/s) \del^{I_2}L^{J_2} \psi \big\|_{L^\infty(\Hcal_{\bar{s}})}
\\
&\lesssim 
	(C_1 \eps)^2 \bar{s}^{-3/2} \log \bar{s} + (C_1 \eps)^2 \bar{s}^{-1}
       \lesssim 
	(C_1 \eps)^2 \bar{s}^{-1},
\endaligned
\ee
we obtain
\be
\mE_{m_g}(s, \del^I L^J \psi)^{1/2}
\leq \eps + C (C_1 \eps)^2 \log s.
\ee
Similarly for $|I|+|J|\leq N-1$ we obtain
\be 
\mE_{m_g}(s, \del^I L^J \psi)^{1/2} 
\leq \eps + C (C_1 \eps)^2.
\ee

In order to obtain estimates for $A^\nu$, we first bound the energy for $\tildeA^\nu$
\be 
\aligned
\mE_m(s, \del^I L^J \tilde A^\nu)^{1/2}
&\leq 
	\eps + \int_{s_0}^s \big\|\del^I L^J ( \la \gamma^0 \gamma^\nu \del_\mu \psi, \del^\mu \psi \ra ) \big\|_{L^2_f(\Hcal_{\bar{s}})} 
	+ m_g^2 \big\| \del^I L^J (\psi^* \gamma^0 \gamma^\nu \psi ) \big\|_{L^2_f(\Hcal_{\bar{s}})} 
\\
& \quad
	+ \big\|\del^I L^J (\psi ^2 \del A + \psi A \del \psi +  \psi^3 \del \psi ) \big\|_{L^2_f(\Hcal_{\bar{s}})}  \, d \bar{s}
\\
&\leq 
\eps + C (C_1 \eps)^2.
\endaligned
\ee
Next, recalling definition \eqref{eq:def-B-vec} we use Young's inequality to obtain for all $|I|+|J|\leq N$
\be 
\aligned
\mE_{m_q}(s, \del^I L^J  A^\nu)^{1/2}
& \leq (3/2) \mE_{m_q} (s, \del^I L^J \tildeA^\nu)^{1/2} + (3/2) \mE_{m_q} (s, \del^I L^J (\psi)^2)^{1/2}
\\
& \leq (3/2) \eps + C \sum_{\substack{I_1+I_2=I, J_1+J_2=J \\ I_1 + J_1 \leq N-2}} \big\| \del^{I_1} L^{J_1} \psi \big\|_{L^\infty(\Hcal_s)}
\mE(s, \del^{I_2} L^{J_2} \psi)^{1/2}
\\
& \lesssim \eps + (C_1 \eps)^2.
\endaligned
\ee
A similar procedure gives the refined estimates for $\chi_+$
\bel{eq:refine-chi+} 
\Ecal_{m_\lambda} (s, \del^I L^J \chi_+)^{1/2}
\leq 
(3/2) \eps + C (C_1 \eps)^2,
\ee
while the refined estimates for $\chi_-$
\bel{eq:refine-chi-} 
\Ecal_{m_q} (s, \del^I L^J \chi_-)^{1/2}
\leq 
\eps + C (C_1 \eps)^2
\ee
can be obtained directly.
A combination of \eqref{eq:refine-chi+} and \eqref{eq:refine-chi-} gives the refined estimates for $\chi$
\be 
\Ecal_{m_\lambda} (s, \del^I L^J \chi)^{1/2}
\lesssim
\eps + (C_1 \eps)^2.
\ee
By choosing $C_1$ sufficiently large and $\eps$ sufficiently small, we arrive at the refined bounds \eqref{eq:bootrefined}. This shows global existence and thus completes the proof of Theorem \ref{thm:full-U(1)}. Furthermore recalling the relation $t < s^2$ within the cone
$\mathcal{K}$, then \eqref{eq:directLinf} and \eqref{eq:sup-Dirac-mm} verify \eqref{eq:mainthmpsiest}.


\subsection*{Acknowledgements}

The authors were supported by the Innovative Training Networks (ITN) grant 642768, entitled ModCompShock. The second author (PLF) 
also gratefully acknowledge support from the Simons Center for Geometry and Physics, Stony Brook University, at which some of the research for this paper was performed. This work was completed when the third author (ZW) visited Sorbonne Universit\'e for several months in 2017--2018. During the completion of this work, the third author also acknowledges financial support from the Austrian Science Fund (FWF) project P29900-N27 `Geometric Transport equations and the non-vacuum Einstein-flow'. 


\end{document}